\documentclass[10pt,twoside, a4paper, english, reqno]{amsart}
\usepackage[dvips]{epsfig}
\usepackage{a4wide}
\usepackage{amscd}
\usepackage{amssymb}
\usepackage{amsthm}
\usepackage{amsmath}
\usepackage{latexsym}
\usepackage{graphicx,enumitem,dsfont}
\usepackage{subcaption}

\usepackage{upref}
\usepackage[colorlinks,backref]{hyperref}
\usepackage{color,cancel}
\usepackage[usenames,dvipsnames]{xcolor}
\usepackage{comment}
\usepackage{booktabs}
\usepackage{pdfsync}
\def\guy#1{\textcolor{red!100!black}{#1} }


\setlength{\topmargin}{-.5cm}
\setlength{\textheight}{23cm}
\setlength{\evensidemargin}{0cm}
\setlength{\oddsidemargin}{0cm}
\setlength{\textwidth}{16cm}
\theoremstyle{plain}
\newtheorem{thm}{Theorem}[section]
\theoremstyle{plain}
\newtheorem{lemma}[thm]{Lemma}

\theoremstyle{definition}
\newtheorem{definition}{Definition}[section]
\newtheorem{remark}{Remark}[section]

\newtheorem*{maincorollary*}{Main Corollary}

\newenvironment{Assumptions}
{
\setcounter{enumi}{0}

\begin{enumerate}}
{\end{enumerate} }

\newcommand{\fr}{\mathfrak{L}_{\lambda}}
\newcommand{\norm}[1]{\left\|#1\right\|}
\newcommand{\abs}[1]{\left|#1\right|}
\newcommand{\sign}{\text{sign}}
\newcommand{\sgn}{\mathrm{sign}}

\newcommand{\Dx}{{\Delta x}}
\newcommand{\Dy}{{\Delta y}}
\newcommand{\Dt}{{\Delta t}}

\newcommand{\R}{\ensuremath{\mathbb{R}}}
\newcommand{\N}{\ensuremath{\mathbb{N}}}
\newcommand{\E}{\ensuremath{\mathbb{E}}}

\newcommand{\Div}{\mathrm{div}\,}
\newcommand{\rd}{\ensuremath{\mathbb{R}^d}}
\newcommand{\supp}{\ensuremath{\mathrm{supp}\,}}
\newcommand{\goto}{\ensuremath{\rightarrow}}
\newcommand{\grad}{\ensuremath{\nabla}}
\newcommand{\eps}{\ensuremath{\varepsilon}}
\newcommand{\Z}{\mathbb{Z}}

\def\N{{I\!\!N}}

\numberwithin{equation}{section} \allowdisplaybreaks

\title[Scheme for Stochastic fractional conservation laws]
{On the rate of convergence of a numerical scheme for Fractional conservation laws with noise}

\date{}


\keywords{ Stochastic Conservation Laws;
 Stochastic Entropy Solution; Young measures; Finite difference schemes, Error Estimates}

%
%

\author[Ujjwal Koley]{Ujjwal Koley}
\address[Ujjwal Koley]{\newline
	Centre for Applicable Mathematics,
	Tata Institute of Fundamental Research,
	P.O.\ Box 6503, GKVK Post Office,
	Bangalore 560065, India}
\email[]{ujjwal@tifrbng.res.in}

\author[Guy Vallet]{Guy Vallet}
\address[Guy Vallet]{\newline 
	LMAP UMR- CNRS 5142, IPRA BP 1155, 64013 Pau Cedex, France}
\email[]{guy.vallet@univ-pau.fr}

\begin{document}
\begin{abstract}
We consider a semi-discrete finite volume scheme for a degenerate fractional conservation laws driven by a cylindrical Wiener process. Making use of the bounded variation (BV) estimates, Young measure theory, and a clever adaptation of classical  Kru\v{z}kov theory, we provide estimates on the rate of convergence for approximate solutions to fractional problems. The main difficulty stems from the degenerate fractional operator, and requires a significant departure from the existing strategy to establish Kato’s type of inequality. Finally, as an application of this theory, we  demonstrate numerical convergence rates.
\end{abstract}

\maketitle

\section{Introduction}
In this paper, we analyze finite volume schemes for a degenerate fractional conservation laws driven by a cylindrical Wiener process. To describe the problem, let us assume that $\big(\Omega, \mathbb{P}, \mathbb{F}, \{\mathbb{F}_t\}_{t\ge 0} \big)$ is a filtered probability space satisfying 
 the usual hypothesis \textit{i.e.},  $\{\mathbb{F}_t\}_{t\ge 0}$ is a right-continuous filtration such that $\mathbb{F}_0$ 
 contains all the $\mathbb{P}$-null subsets of $(\Omega, \mathbb{F})$. We are interested in numerical approximations of $L^2(\R^d)$-valued predictable processes $u(t, \cdot)$ which satisfy the following Cauchy problem
  \begin{equation}
\label{eq:stoc_con_brown}
 \begin{cases} 
 du(t,x) - \Div f(u(t,x))\,dt + \mathfrak{L}_{\lambda}[A(u(t,\cdot))](x)\,dt
 =\sigma(u(t,x)) \,dW(t), & \quad (x,t) \in \Pi_T, \\
 u(0,x) = u_0(x), & \quad  x\in \R^d,
 \end{cases}
 \end{equation}
 where $\Pi_T=\R^d \times (0,T)$ with $T>0$ fixed. Here $u_0: \R^d \rightarrow \R$ is a given initial function, $f:\R \mapsto \R^d, A: \R \mapsto \R $ are given (sufficiently smooth) nonlinear functions. Moreover, $\fr[u]$ denotes the classical fractional Laplace operator of order $\lambda \in (0,1)$, and defined as
\begin{align*}
\fr[\psi](x) := d_{\lambda}\, \text{P.V.}\, \int_{|z|>0} \frac{\psi(x) -\psi(x+z)}{|z|^{d + 2 \lambda}} \,dz,
\end{align*}
for some constants $d_{\lambda}>0$, and a sufficiently smooth function $\psi$. The nonlinearity $A'$ is allowed to be zero on an interval, thus \eqref{eq:stoc_con_brown} is a strongly degenerate fractional problem.
Furthermore, $W(t)$ is a cylindrical Wiener process given by $W(t)= \sum_{k\ge 1} g_k \beta_k(t)$, where $(\beta_k)_{k\ge 1}$ are mutually independent real valued standard Wiener processes, and $(g_k)_{k\ge 1}$ is a complete orthonormal system in a separable Hilbert space $\mathbb{H}$. Finally, we consider the mapping $\sigma(w): \mathbb{H}\goto L^2(\R^d)$, for each $w$ in $L^2(\R^d)$,  defined by $\sigma(w)g_k= h_k(w(\cdot))$. For our purpose, we assume that $h_k$ is Lipschitz-continuous and define $\mathbb{G}^2(r):= \sum_{k\ge 1} h_k^2(r)$.

\subsection{Review of the existing literature}
First observe that, for the case $A= \sigma =0$, the equation \eqref{eq:stoc_con_brown} represents a well-known conservation law in $\R^d$.
We refer to the pioneer papers by Kru\v{z}kov \cite{kruzkov} and Vol'pert \cite{Volpert} for existence and uniqueness results related to scalar conservation laws. Numerical schemes for deterministic hyperbolic conservation laws have been studied in \cite{Oleinik}, \cite{Hartenetal}, \cite{CrandallMajda} as well as others.
Well-posedness theory for the deterministic counterpart of \eqref{eq:stoc_con_brown} (\textit{i.e.}, the case $\sigma=0$) has been well studied in literature, starting with the work by Alibaud \cite{Alibaud}, and Cifani \& Jakobsen \cite{CifaniJakobsen}. 
For the convergence/rate of converegnce of numerical schemes for deterministic degenerate conservation laws one can refer to works by Cifani \& Jakobsen \cite{Cifani}, Droniou \cite{Jerome}, Karlsen \textit{et al.} \cite{ujjwal}, and the references therein.

On the other hand, well-posedness theory for stochastic conservation laws (\textit{i.e.}, the case $A=0$) has been established by several authors, see \cite{BaVaWit, BaVaWit_JFA, Chen-karlsen_2012, Debussche}. Regarding the convergence/rate of convergence of numerical schemes for stochastic conservation laws, we mention the paper by Kroker and Rohde \cite{kroker}, authors of this paper and Majee \cite{KMV1}, and a series of papers by Bauzet \textit{et al.} \cite{bauzet2015}, \cite{bauzet-fluxsplitting}, \cite{bauzet-finvolume}. Moreover, degenerate stochastic equations has been studied by many authors, see \cite{BaVaWitParab, vovelle, KMV}. For the stochastic non-local equation \eqref{eq:stoc_con_brown}, existence $\&$ uniqueness results have been recently developed, by exploiting a new technical framework for the proof of uniqueness, in \cite{BhKoleyVa} (linear diffusion case), and \cite{BhKoleyVa1} (degenerate diffusion case). Moreover, an explicit continuous dependence estimate on the nonlinearities and a rate of convergence estimate for the stochastic vanishing viscosity method was also established in \cite{BhKoleyVa,BhKoleyVa1}. Given the fact that, a plethora of phenomena in physics and finance are modeled by equations of the form \eqref{eq:stoc_con_brown} (\textit{e.g.} fluid flows in porous media, and pricing derivative securities in financial markets), there is a pressing need for efficient numerical schemes (such as finite volume schemes) to approximate the underlying problem \eqref{eq:stoc_con_brown}. Note that, the challenges in dealing with numerical schemes for SPDEs like \eqref{eq:stoc_con_brown} are manifold, mainly due to the presence of the fractional operator $\&$ \emph{multiplicative} noise term in \eqref{eq:stoc_con_brown}. Indeed, one needs to borrow and merge ideas from numerical methods for SDEs and approximation methods for the underlying deterministic problems. This is of course easier said than done since one needs to successfully capture the noise-noise interaction as well. In the realm of well-posedness theory for SPDEs, noise-noise interaction term plays a pivotal role, for details see \cite{BhKoleyVa, MKS01, K1, K2, BisKoleyMaj,koley2013multilevel,Koley3}.

\subsection{Aims and scope of this paper}
Due to the non availaibility of the explicit solution for stochastic balance laws, we intend to construct a numerical scheme for approximating the solution of \eqref{eq:stoc_con_brown}. Note that, although there are large number of papers exploring convergence results for stochastic conservation laws, a rigorous theoretical study for stochastic fractional conservation laws is in a state of infancy. In fact, the specific question about deriving the convergence rate for the approximate solutions to stochastic non-local problems is virtually untouched. For stochastic conservation laws, the convergence rate for monotone methods is known to be $\Dx^{1/2}$, $\Dx$ being the discretization parameter. Keeping in mind that the rate of convergence estimate intimately related to the so-called Kato's type of inequality, the main difficulties in dealing with the specific stochastic non-local problem \eqref{eq:stoc_con_brown} can be summarized as follows:
\begin{itemize}
\item [(1)] For the stochastic non-local equation, by virtue of It\^o's formula, one requires to work with a \emph{smooth approximations} of the usual Kru\v{z}kov's entropies. 
\item [(2)] The numerical analysis of the deterministic counterpart of \eqref{eq:stoc_con_brown} heavily relies on the following (Kato's type of) inequality
\begin{equation}\label{imp_10}
\sgn\Big(u(x)-v(y)\Big) \Big(\fr[A(v)](y)- \fr[A(u)](x)\Big) \le \fr\Big[|A(u) -A(v)|\Big](x,y),
\end{equation}
and the above inequality \eqref{imp_10} does not hold for approximations of the usual Kru\v{z}kov's entropies. 
\end{itemize}
In view of the above incompatibility, to establish stochastic Kato's  inequality (even for a semi-discrete scheme!) one has to look beyond the traditional approach of proving convergence of numerical schemes. Indeed, the main technical achievement of this paper stems from successful demonstration of the stochastic Kato's  inequality. The method of proof is new and requires significant changes in computing hierarchical limits with respect to various parameters involved in the proof leading upto Kato's  inequality. In a nutshell, we send the parameter $\delta \downarrow 0$ before sending the parameter $l \downarrow 0$, see Section~\ref{sec:main-thm} for a complete description of the main ingredients of our method. Inevitably, above changes in hierarchical limits has effects on all the terms involved in the entropy inequality, and appropriate changes are required to deal with them.

To summarize, we consider a semi-discrete finite difference scheme and  show that the expected value of the $L^1$-difference of the approximate numerical solution and the unique entropy solution to \eqref{eq:stoc_con_brown} converges at some rate depending on $\lambda$, which is in accordance with the rate for the deterministic fractional conservation law \cite{Cifani}. Finally, let us mention that we do not know how to derive rate of convergence estimates for a fully-discrete scheme for \eqref{eq:stoc_con_brown}, but we strongly believe that the technical achievement of this paper is absolutely essential to establish such a result.

We organize the details as follows. We first briefly recall the entropy framework, introduce the semi-discrete finite volume scheme, and state the main result towards the end of Section \ref{sec:pre}. Further in Section \ref{sec:apriori&convergence} we prove \textit{a priori} estimates for the approximate numerical solutions. Section \ref{sec:main-thm} deals with the proof of the main theorem. Finally, we conclude by presenting some numerical results in Section \ref{numerics}.

 \section{Mathematical Framework and Statement of the Main Result}
 \label{sec:pre}
Throughout this paper, by the letter $C$, we denote various generic constants which may change from line to line. Given a separable Banach space $Z$, let us denote by $N^2_w(0,T,Z)$ the space of square integrable predictable $Z$-valued processes (cf. \cite{PrevotRockner} p.28 for example). The Euclidean norm on $\R^d$ is denoted by $||\cdot||$ and  the BV semi-norm is by $|\cdot|_{BV(\R^d)}$. We remark that the space BV$(\R^d)$ consists of functions with bounded variation on $\R^d$, endowed with the semi-norm $|u|_{BV(\R^d)}= TV_{x}(u)$, where $TV_{x}(u)$ is the total variation of $u$ defined on $\R^d$. We also denote by $\sgn_0(x)=\frac{x}{|x|}$ if $x\neq 0$, and $0$ otherwise.

In the rest of the paper, we consider the following assumptions:
 \begin{Assumptions}
\item \label{A1} The initial function $u_0$ is a deterministic function in $L^2(\R^d)\cap L^\infty(\R^d) \cap BV(\R^d)$.

\item \label{A2}  The flux function $f=(f_1,f_2,\cdots, f_d): \R \mapsto \R^d$ is a Lipschitz continuous function with $f_k(0)=0$, for all $1 \le k \le d$.
\item \label{A3} The function $A: \R \mapsto \R$ is a non decreasing Lipschitz continuous function with $A(0)=0$.
\item \label{A5} There exists $M>0$ such that $\sigma(u)=0$, for all $|u|>M$ and $h_k(0)=0$, for all $k\ge 1$. Moreover, there exists a constant $K > 0$  such that 
\begin{align*} 
\sum_{k\ge 1}\big| h_k(u)-h_k(v)\big|^2  \leq K |u-v|^2,\,\,\text{which implies }\,\, \mathbb{G}^2(u)= \sum_{k\ge 1} h_k^2(u)\le K\,|u|^2, ~\text{for all} \,\,u,v \in \R.
\end{align*}
\end{Assumptions}


\subsection{Stochastic Entropy Formulation}
It is well-known that weak solutions to \eqref{eq:stoc_con_brown} need not be unique. Consequently, an \emph{entropy admissibility condition} must be imposed to single out the physically correct solution. To describe the entropy framework for \eqref{eq:stoc_con_brown}, we need to first split the non-local operator $\fr$ into two terms: for each $r>0$, we write $\fr[\psi] := \mathfrak{L}_{\lambda, r}[\psi] + \mathfrak{L}_{\lambda}^{r}[\psi]$, where
\begin{align*}
\mathfrak{L}_{\lambda, r}[\psi](x)&:= d_{\lambda}\, \text{P.V.}\, \int_{|z|\le r} \frac{\psi(x) -\psi(x+z)}{|z|^{d + 2 \lambda}} \,dz, \\
\mathfrak{L}_{\lambda}^{r}[\psi](x)&:= d_{\lambda}\,\int_{|z|> r} \frac{\psi(x) -\psi(x+z)}{|z|^{d + 2 \lambda}} \,dz.
\end{align*}
We shall also denote 
\begin{align*}
\mathfrak{L}_{\lambda, r}^s[\psi](x)&:= d_{\lambda}\,\int_{r\geq |z|> s} \frac{\psi(x) -\psi(x+z)}{|z|^{d + 2 \lambda}} \,dz, \text{ if }0<s<r; \quad \mathfrak{L}_{\lambda, r}^s[\psi](x)=0, \text{ if }s\geq r>0.
\end{align*}
%
%
We now recall the notion of the entropy solution for \eqref{eq:stoc_con_brown} from \cite[Definition 1.2]{BhKoleyVa1}.
\begin{definition} [Stochastic Entropy Solution]
	\label{defi:stochentropsol}
	An element $u\in N^2_w(0,T, L^2(\R^d))$, with intial data $u_0 \in L^2(\R^d)$, is called a stochastic entropy solution of \eqref{eq:stoc_con_brown} if given a non-negative test function $\psi\in C_{c}^{1,2}([0,\infty )\times\R^d) $ and a regular convex entropy-entopy flux pair $(\eta,\zeta)$, the following inequality holds:
	\begin{align}
	&  \int_{\R^d} \eta(u_0(x))\psi(0,x)\,dx + \int_{\Pi_T} \left[ \eta(u(t,x)) \partial_t\psi(t,x) -  \grad \psi(t,x)\cdot \zeta(u(t,x)) \right]\,dx\,dt  \\
	& -\int_{\Pi_T} \Big[ \mathfrak{L}_{\lambda}^{r}[A(u(t,\cdot))](x)\, \psi(t,x)\, \eta'(u(t,x)) + A^\eta_k(u(t,x)) \,\mathfrak{L}_{\lambda, r}[\psi(t,\cdot)](x) \Big]\,dx\,dt \notag \\
	& 
	+ \sum_{k\ge 1}\int_{\Pi_T} h_k(u(t,x))\eta^\prime (u(t,x))\psi(t,x)\,d\beta_k(t)\,dx
	+ \frac{1}{2}\int_{\Pi_T}\mathbb{G}^2(u(t,x))\eta^{\prime\prime} (u(t,x))\psi(t,x)\,dx\,dt  
	\ge  0, \,\, \mathbb{P}-\text{a.s} \notag
	.\label{inq:entropy-solun}
	\end{align}
\end{definition}

\subsection{Finite Difference Scheme}
\label{Finite_Difference_Scheme} 


We begin by introducing some notations needed to define the
semi-discrete finite difference scheme. Let $\Dx$ denote a small positive number that represents the
spatial discretization parameter of the numerical scheme. We set $x_\alpha=\alpha\Dx$, for $\alpha=(\alpha_1,\cdots,\alpha_d)\in \Z^d$, to denote the spatial mesh points. 
Furthermore, following Cifani \textit{et al.} \cite{Cifani}, let us introduce the spatial grid cells
\begin{align*}
R_0=  \big[-\Dx/2,\Dx/2\big)^d,\quad R_\alpha = [x_{\alpha_1-1/2}, x_{\alpha_1+1/2}) \times \cdots \times [x_{\alpha_d-1/2}, x_{\alpha_d+1/2}) = x_\alpha + R_0,
\end{align*}
where $x_{j\pm1/2} = x_j \pm \frac{\Dx}{2}$, for $j \in \Z$ and $\alpha=(\alpha_1,\alpha_2,\cdots,\alpha_d) \in \Z^d$. 
\\[0.2cm]
For any given $(u_\alpha) \in \R^{\Z^d}$, set $u_\Dx =\sum_{\alpha \in \Z^d} u_\alpha \mathds{1}_{R_\alpha}$ ($\mathds{1}_{A}$ denotes the characteristic function of the set
$A$) and note that $u_\Dx$ is measurable in $\R^d$. Moreover, $u_\Dx \in L^p(\R^d)$ if and only if $u_\Dx \in \ell^p(\Z^d)$ with
\begin{align*}
p<+\infty:&\quad \|u_\Dx\|^p_{L^p(\R^d)}=\int_{\R^d}|u_\Dx(x)|^pdx = \Dx^d \sum_{\alpha \in \Z^d} |u_\alpha|^p = \Dx^d \|(u_\alpha)\|_{\ell^p(\Z^d)}^p,
\\
p=+\infty:&\quad \|u_\Dx\|_{L^\infty(\R^d)}=\sup_{\alpha \in \Z^d} |u_\alpha| = \|(u_\alpha)\|_{\ell^\infty(\Z^d)}.
\end{align*}
Furthermore, the BV semi-norm for a lattice function $u_{\Dx}$ is defined as
\begin{equation*}
\abs{u_\Dx(t,\cdot)}_{BV} = \Dx^{d-1} \sum_{\alpha\in\Z^d} \sum_{i=1}^d\abs{u_{\alpha+e_i}(t)- u_\alpha(t)},
\end{equation*}
where $\{e_1,e_2,\cdots e_d\}$ denotes the standard basis of $\R^d$. Finally, we denote by $D_k^{\pm}$ the  discrete forward and backward differences in space, \textit{i.e.,} 
\begin{equation*}
  D^{\pm}_k u_\Dx = \sum_{\alpha \in \Z^d} D^{\pm}_k u_\alpha \mathds{1}_{R_\alpha} = \sum_{\alpha \in \Z^d} 
  \pm \frac{u_{\alpha \pm e_k}- u_\alpha}{\Dx} \mathds{1}_{R_\alpha},
\end{equation*}
where the same notation $D^{\pm}_k$ is used for $u_\Dx$ and $u_\alpha$, with a slight abuse of notation.
%

We now propose the following semi-discrete (in time) finite volume scheme 
approximating the solutions generated by the equation \eqref{eq:stoc_con_brown}   
%
\begin{align}
  du_\alpha(t) + \sum^d_{k=1}D^{-}_k F_k(u_\alpha, u_{\alpha+e_k})\,dt  + \hat{\mathfrak{L}}_{\lambda} [ A(u_\Dx(t,\cdot)) ]_\alpha\,dt& = \sigma(u_\alpha(t))\,dW(t),\,\,t>0, \, \alpha \in\Z^d,
  \label{eq:levy_stochconservation_ discrete_laws_1}  \\
  u_\alpha(0) &=\frac{1}{\Delta x^d}\int_{R_\alpha} u_0(x)\,dx, \, \alpha \in\Z^d, \label{eq:scheme_initial}
  \end{align}
where $F_i$ is a monotone numerical flux corresponding to $f_i$, for each $1\le i \le d$.
Here monotone is understood in the following sense: $ F_i$ is Lipschitz continuous from $\R^2$ to $\R$, $F_i(u,u)=f_i(u)$ for any real $u$, $a \mapsto F_i(a,b)$ is non-decreasing and $b \mapsto F_i(a,b)$ is non-increasing. Some classical examples of monotone flux include Engquist-Osher flux, Godunov flux, and modified Lax-Friedrichs flux. Next, we give details about the discretization of the non-local term.

\subsubsection{Approximation of the non-local operator}
Following the idea developed in \cite{Cifani}, for any bounded $u_\Dx$, we will use the following (two layer) discretization of the non-linear non-local term: for any $u_\Dx =\sum_{\alpha \in \Z^d} u_\alpha \mathds{1}_{R_\alpha}$, 
\begin{align}\label{approx}
\mathfrak{L}_{\lambda}[A(u)] \approx \hat{\mathfrak{L}}_{\lambda} {[ A(u_{\Delta x})]}=\sum_{\alpha \in \Z^d} \hat{\mathfrak{L}}_{\lambda} {[ A (u_{\Delta x}) ]}_\alpha \mathds{1}_{R_\alpha} := \frac{1}{\Dx^d}\sum_{\alpha,\beta \in \Z^d}G_{\alpha,\beta} A(u_\beta)\mathds{1}_{R_\alpha},
\end{align}
where 
\begin{align*}
\hat{\mathfrak{L}}_{\lambda}[A(u_\Dx)]_\alpha :=& \Dx^{-d} \int_{R_\alpha} \mathfrak{L}_{\lambda}^{\frac{\Dx}{2}}[A(u_\Dx)](x) \,dx = 
d_\lambda\Dx^{-d} \int_{R_\alpha}\Big[ 
\int_{|z|>\frac{\Dx}{2}} \frac{A(u_\Dx)(x)-A(u_\Dx)(x+z)}{|z|^{d+2\lambda}} dz \Big]dx \\
&=\sum_{\beta \in \Z^d} A(u_\beta) \Big[ \Dx^{-d} \int_{R_\alpha} d_\lambda \int_{|z|>\frac{\Dx}{2}} \frac{\mathds{1}_{R_\beta}(x)-\mathds{1}_{R_\beta}(x+z)}{|z|^{d+2\lambda}} dz  dx \Big] := \sum_{\beta \in \Z^d} \Dx^{-d}G_{\alpha,\beta} A(u_\beta). 
\end{align*}

\begin{remark}
\label{remark_imp} \hfill \\ 
By denoting $d\mu(z) = d_\lambda \frac{dz}{|z|^{d+2\lambda}}$ and, for any $\alpha,\beta \in \Z^d$,  $G_{\alpha,\beta}=\int_{R_\alpha} \int_{|z|>\frac{\Dx}{2}}[\mathds{1}_{R_\beta}(x)-\mathds{1}_{R_\beta}(x+z) d\mu(z)  dx$, it is straightforward to verify that
\begin{align*}
&G_{\alpha,\beta}= G_{\beta,\alpha}, \quad G_{\alpha,\beta} \le 0,\quad 0 \leq  -\sum_{\beta \neq \alpha}G_{\alpha,\beta}= -\sum_{\beta \neq \alpha}G_{\beta,\alpha} = G_{\alpha,\alpha} \leq \Dx^{d} \mu\Big(|z|>\frac{\Dx}{2}\Big),
\\
&\forall k=1, \cdots,d, \quad G_{\alpha,\beta} = G_{\alpha +e_k,\beta+e_k}.
\end{align*}
\end{remark}

For technical reasons, we need to split the non-local integral in two parts and introduce the following notations: For any $ u_{\Dx}=\sum_{\beta \in \Z^d} u_\beta \mathds{1}_{R_\beta}$, and all $r>\frac{\Dx}{2}$, we write $\hat{\mathfrak{L}}_{\lambda} {[  u_{\Dx}]}=\hat{\mathfrak{L}}_{\lambda,r} {[ u_{\Dx} ]}+\hat{\mathfrak{L}}_{\lambda}^r {[  u_{\Dx} ]}$, where 
\begin{align*}
&\quad \hat{\mathfrak{L}}_{\lambda,r} {[ u_{\Dx} ]}=\sum_{\alpha \in \Z^d} \hat{\mathfrak{L}}_{\lambda,r} {[ \overline u(t,\cdot) ]}_\alpha \mathds{1}_{R_\alpha} = \frac{1}{\Dx^d}\sum_{\alpha,\beta \in \Z^d} G_{\alpha,\beta,r} \,u_\beta \mathds{1}_{R_\alpha},
\\
\text{with}\,\, & 
G_{\alpha,\beta,r} := \int_{R_\alpha}\int_{\frac{\Dx}{2}<|z|\le r} \big(\mathds{1}_{R_\beta}(x) - \mathds{1}_{R_\beta}(x+z) \big) \,d\mu(z)\,dx.
\end{align*}

\begin{align*}
\text{And,} 
&\quad \hat{\mathfrak{L}}_{\lambda}^r {[  u_{\Dx}]}=\sum_{\alpha \in \Z^d} \hat{\mathfrak{L}}_{\lambda}^r {[ u_{\Dx} ]}_\alpha \mathds{1}_{R_\alpha} = \frac{1}{\Dx^d}\sum_{\alpha,\beta \in \Z^d} G^r_{\alpha,\beta} \,u_\beta \mathds{1}_{R_\alpha}, \hspace{6cm}
\\[-0.2cm]
&\hspace{6cm}\text{with}\quad
G^r_{\alpha,\beta} := \int_{R_\alpha}\int_{r<|z|} \big(\mathds{1}_{I_\beta}(x) - \mathds{1}_{I_\beta}(x+z) \big) \,d\mu(z)\,dx.
\end{align*}

\begin{remark}
\label{remark_imp_bis} 
It is easy to check that Remark~\ref{remark_imp} holds for both $G^r_{\alpha,\beta}$ and $G_{\alpha,\beta,r}$, with the fact that $G^r_{\alpha,\alpha} \leq \Dx^{d} \mu(r>|z|>\frac{\Dx}{2})$ and $G_{\alpha,\beta,r} \leq \Dx^{d} \mu(|z|>r)$, and  that  $G_{\alpha,\beta} = G^r_{\alpha,\beta} + G_{\alpha,\beta,r}$.
\end{remark}
\noindent In light of the above observations, we can recast the scheme \eqref{eq:levy_stochconservation_ discrete_laws_1} as
\begin{align} \label{scheme}
  du_\alpha + 
  \frac{1}{\Dx}\sum^d_{i=1} \Big[F_i(u_\alpha, u_{\alpha+e_i})-F_i(u_{\alpha-e_i}, u_{\alpha})\Big]\,dt +\frac{1}{\Dx^d}\sum_{\beta \in \Z^d}G_{\alpha,\beta} A(u_\beta)\,dt & = \sigma(u_\alpha)\,dW(t).  
  \end{align}
Thanks to the assumptions on the data $\eqref{A1}-\eqref{A5}$ and Remark~\ref{remark_imp}, the solvability of \eqref{scheme} follows from a classical argument of stochastic differential equations with Lipschitz non-linearities (see \textit{e.g.}  Pr\'evot and R\"ockner \cite[Sec. 4.1 p.55]{PrevotRockner}).

\subsection{Discrete entropy inequality}

With the help of the above scheme \eqref{scheme}, we can derive the discrete entropy inequality. To that context, let $(\eta,\zeta)$ be a $C^2$ entropy-entropy flux pair. Given a non-negative test function $\psi\in C_{c}^{1,2}([0,\infty )\times\R^d)$, consider its piecewise approximation $\psi_\alpha(t):= \frac{1}{\Dx}\int_{R_\alpha} \psi(t,x)\,dx$, and apply It\^{o}'s product
rule to $\eta(u_\alpha(t))\psi_\alpha(t)$ to yield, 
\begin{align*}
&\eta(u_\alpha(T))\, \psi_\alpha(T) -  \eta(u_\alpha(0))\, \psi_\alpha(0) \\
 =& \int_0^T \left[\eta(u_\alpha(t)) \,\partial_t \psi_\alpha(t) - \eta^\prime (u_\alpha(t))\left( \frac{1}{\Dx}\sum^d_{i=1} \Big[F_i(u_\alpha, u_{\alpha+e_i})-F_i(u_{\alpha-e_i}, u_{\alpha})\Big] + \hat{\mathfrak{L}}_{\lambda} {[  A(u_\Dx(t,\cdot)) ]}_\alpha\right) \psi_\alpha(t) \right]\,dt \notag \\
 & + \sum_{k\ge 1}\int_0^T g_k(u_\alpha(t))\eta^\prime (u_\alpha(t)) \psi_\alpha(t)\,d\beta_k(t)
 + \frac{1}{2}\int_0^T\mathbb{G}^2(u_\alpha(t))\eta^{\prime\prime} (u_\alpha(t)) \psi_\alpha(t)\,dt.
 \end{align*}
Denote by  $\overline\psi(t,x)=\sum\limits_{\alpha \in \Z^d}\psi_\alpha(t) \mathds{1}_{R_\alpha}(x)$.  
Then, integrating over $R_\alpha$ and summing over $\alpha \in \Z^d$ yields
\begin{align*}
& \int_{\R^d} \eta(u_{\Dx}(T,x))\, \overline\psi(T,x)\,dx -  \int_{\R^d} \eta(u_{\Dx}(0,x))\, \overline\psi(0,x)\,dx \\
 &=  \int_{\Pi_T} \eta(u_{\Dx}(t,x)) \,\partial_t \overline \psi(t,x) -  \mathfrak{L}_{\lambda}^{\frac{\Dx}{2}}[ A(u_{\Dx}(t, \cdot))] (x) \overline\psi(t,x)\, \eta'(u_{\Dx}(t,x))  \,dx\,dt \notag \\
& - \int_{\Pi_T} \eta^\prime (u_{\Dx}(t,x))\frac{1}{\Dx}\sum^d_{i=1} \Big[F_i(u_{\Dx}(t,x), u_{\Dx}(t,x+\Dx e_i))-F_i(u_{\Dx}(t,x-\Dx e_i), u_{\Dx}(t,x))\Big]  \overline\psi(t,x)\,dx\,dt \notag \\
 & + \sum_{k\ge 1}\int_{\Pi_T} g_k(u_{\Dx}(t,x))\eta^\prime (u_{\Dx}(t,x)) \overline\psi(t,x)\,d\beta_k(t)\,dx
 + \frac{1}{2}\int_{\Pi_T}\mathbb{G}^2(u_{\Dx}(t,x))\eta^{\prime\prime} (u_{\Dx}(t,x)) \overline\psi(t,x)\,dx\,dt.
 \end{align*}
To deal with the fractional term we follow \cite[Appendix A]{BhKoleyVa1} to notice that, for any $r > \frac{\Dx}{2}$, 
\begin{align*}
& \int_{\Pi_T}  \mathfrak{L}_{\lambda}^{\frac{\Dx}{2}}[A( u_{\Dx}(t, \cdot))] (x) \overline\psi(t,x)\, \eta'(u_{\Dx}(t,x))  \,dx\,dt 
\\&= \int^T_0\left( \int_{\{ \frac{\Dx}{2}<|x-y|\}}\frac{ (A(u_\Dx(t,x))-A(u_\Dx(t,y)))((\overline\psi\eta')(u_\Dx(t,x))-(\overline\psi\eta')(u_\Dx(t,y)))}{|x-y|^{d+2\lambda}}\,dy\,dx\right)\,dt\\
&=\int^T_0\left( \int_{\{ \frac{\Dx}{2}<|x-y|<r\}}\frac{ (A(u_\Dx(t,x))-A(u_\Dx(t,y)))((\overline\psi\eta')(u_\Dx(t,x))-(\overline\psi\eta')(u_\Dx(t,y)))}{|x-y|^{d+2\lambda}}\,dy\,dx\right)\,dt\\
&+\int^T_0\left( \int_{\{ r<|x-y|\}}\frac{ (A(u_\Dx(t,x))-A(u_\Dx(t,y)))((\overline\psi\eta')(u_\Dx(t,x))-(\overline\psi\eta')(u_\Dx(t,y)))}{|x-y|^{d+2\lambda}}\,dy\,dx\right)\,dt\\
&\ge \int_{\Pi_T} A^\eta_k(u_\Dx(t,x))\mathfrak{L}_{\lambda,r}^{\frac{\Dx}{2}}[\overline{\psi}(t,\cdot)](x) \,dx\,dt + \int_{\Pi_T} \mathfrak{L}_\lambda^r[A(u_\Dx(t,\cdot)](x) \overline{\psi}(t,x) \eta'(u_\Dx(t,x))\,dx\,dt.
\end{align*}

Therefore, the discrete entropy inequality is understood in the following sense: 
\begin{align}
\label{inq:entropy-solun-discr}
 &0 \leq \int_{\R^d} \eta(u_{\Dx}(0,x))\, \psi(0,x)\,dx  +   \int_{\Pi_T} \eta(u_{\Dx}(t,x)) \,\partial_t \psi(t,x)\,dx\,dt  \\
 &- \int_{\Pi_T} \eta^\prime (u_{\Dx}(t,x))\, \frac{1}{\Dx}\sum^d_{i=1} \Big[F_i(u_{\Dx}(t,x), u_{\Dx}(t,x+\Dx e_i))-F_i(u_{\Dx}(t,x-\Dx e_i), u_{\Dx}(t,x))\Big]  \psi(t,x)\,dx\,dt \notag \\
&- \int_{\Pi_T}  A^\eta_k(u_\eps(t,x)) \mathfrak{L}_{\lambda,r}^{\frac{\Dx}{2}}[ \overline\psi(t,\cdot)] (x) +  \mathfrak{L}_{\lambda}^{r}[ A(u_{\Dx}(t, \cdot))] (x) \overline\psi(t,x)\, \eta'(u_{\Dx}(t,x))  \,dx\,dt \notag 
\\ & + \sum_{k\ge 1}\int_{\Pi_T} g_k(u_{\Dx}(t,x))\eta^\prime (u_{\Dx}(t,x)) \psi(t,x)\,d\beta_k(t)\,dx
 + \frac{1}{2}\int_{\Pi_T}\mathbb{G}^2(u_{\Dx}(t,x))\eta^{\prime\prime} (u_{\Dx}(t,x)) \psi(t,x)\,dx\,dt. \notag
 \end{align}
%
%

\noindent We finish this section by stating the main result of this article, and for a proof of this main theorem we refer to Section \ref{sec:main-thm}.
\begin{thm}\label{Main_Theorem}
(Main Theorem) Let the assumptions $\eqref{A1}-\eqref{A5}$ hold, and $u_\Dx$ denotes the approximate solution generated by the finite volume scheme \eqref{scheme}. Moreover, let $u$ denotes the unique BV entropy solution to the problem \eqref{eq:stoc_con_brown}. Then there exists a constant C, independent of $\Dx$, such that for all $t \in (0,T]$
\begin{equation} 
\E \left[ \int_{\R^d} | u_\Dx(t,x) -u(t,x)|\,dx \right] \le C 
\begin{cases}
\sqrt{\Dx}, & \text{for} ~~ \lambda < \frac12, \vspace{0.1cm} \\
\sqrt{\Dx}| \log \Dx|, & \text{for}~~ \lambda =\frac12, \vspace{0.1cm} \\
(\Dx)^{1-\lambda}, & \text{for}~~ \lambda > \frac12.
\end{cases}
\end{equation}
\end{thm}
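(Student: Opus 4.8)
The plan is to run a Kuznetsov-type doubling-of-variables comparison between the exact entropy solution $u$ and the scheme solution $u_\Dx$, combining the continuous entropy inequality \eqref{inq:entropy-solun} for $u$ with the discrete entropy inequality \eqref{inq:entropy-solun-discr} for $u_\Dx$, and then tracking every consistency error as a function of the regularization scales. The prerequisites are the a priori bounds for $u_\Dx$ established in Section~\ref{sec:apriori&convergence}: a uniform $L^\infty$ bound (available since $\sigma$ vanishes for large arguments, Assumption~\eqref{A5}), a $\Dx$-uniform spatial BV bound inherited from $u_0\in BV(\R^d)$ through the monotonicity of the scheme, and an $L^1$ modulus of continuity in time in expectation. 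The BV bound is what ultimately converts the nonlocal and transport defects into positive powers of $\Dx$, while the $L^\infty$ bound confines everything to the region where $\sigma$ is active.

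For the doubling I would introduce a $C^2$ convex approximation $\eta_\delta$ of the Kru\v{z}kov entropy $\abs{\cdot}$, with $\abs{\eta_\delta'}\le 1$ and $\eta_\delta''$ an approximate unit mass at the origin; the smoothing parameter $\delta$ is forced on us because It\^o's formula only applies to $C^2$ entropies. Freezing $k=u_\Dx(s,y)$, I would test \eqref{inq:entropy-solun} with the pair $(\eta_\delta(\cdot-k),\zeta_\delta(\cdot,k))$, symmetrically freeze $k=u(t,x)$ in \eqref{inq:entropy-solun-discr}, and pair both against a product test function of the form $\varphi(t,x)\,\rho_l(x-y)\,\theta_l(t-s)$, where $\rho_l,\theta_l$ are symmetric mollifiers at the doubling scale $l$ and the operator split scale $r$ is kept free. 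After adding the two inequalities and taking expectation, the martingale terms vanish and the second-order It\^o corrections from the two processes combine with the cross-covariation into $\tfrac12\sum_{k\ge1}\abs{h_k(u)-h_k(u_\Dx)}^2\,\eta_\delta''(u-u_\Dx)$; by \eqref{A5} this is dominated by $\tfrac{K}{2}\abs{u-u_\Dx}^2\eta_\delta''(u-u_\Dx)$, and since $\eta_\delta''$ concentrates where $\abs{u-u_\Dx}\lesssim\delta$ the whole noise-noise term is $O(\delta)$ and disappears in the first limit.

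The heart of the matter, and the step I expect to be the main obstacle, is the nonlocal contribution, precisely because the exact Kato inequality \eqref{imp_10} holds for $\sgn$ but fails for $\eta_\delta'$. The plan is to keep both fractional terms in their doubled bilinear form, split each kernel at the scale $r$ into the singular near field $\{\abs{z}\le r\}$ and the integrable far field $\{\abs{z}>r\}$, and then --- contrary to the classical order --- send $\delta\downarrow 0$ \emph{before} $l\downarrow 0$. In this reversed hierarchy the entropy-smoothing defect in the nonlocal terms collapses as $\delta\to0$: only in the limit does one recover $\eta_\delta'(a-b)\to\sgn(a-b)$ and hence the genuine sign structure $\sgn\bigl(A(u)-A(u_\Dx)\bigr)$ afforded by the monotonicity of $A$ in \eqref{A3}. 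The far-field piece then reduces, via the recovered sign structure, to $\mathfrak{L}_\lambda^{r}\bigl[\,\abs{A(u)-A(u_\Dx)}\,\bigr]$, whose spatial integral vanishes by the $L^1$-conservativity of $\mathfrak{L}_\lambda^{r}$, while the near-field singular piece survives as the genuine nonlocal error. Only afterwards do I send $l\downarrow 0$ to collapse $s=t$ and $x=y$, producing a closed stochastic Kato inequality for $\E\,\norm{u(t,\cdot)-u_\Dx(t,\cdot)}_{L^1(\R^d)}$ whose right-hand side still carries $r$ and $\Dx$. Making this reversed double limit legitimate --- controlling the near-field commutator between $\hat{\mathfrak{L}}_\lambda$ and $\mathfrak{L}_\lambda$ uniformly in $\delta$ and $l$ --- is the technical crux and the place where the argument departs from the classical scheme.

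It remains to quantify the surviving consistency errors and optimize. The first-order monotone flux discretization contributes the classical transport error of order $\sqrt{\Dx}$, and the temporal/doubling mollification collapses at no cost once $l\downarrow0$ is taken after $\delta\downarrow0$. The nonlocal contribution is the delicate one: estimating the surviving near-field commutator with the spatial BV bound against the truncated moments of $d\mu$ --- integrals of the type $\int_{\abs{z}\le r}\abs{z}^2\,d\mu(z)$ together with the tail $\int_{\abs{z}>r}d\mu(z)$ --- and carefully tracking these singular integrals as $\Dx,r\to0$ yields a nonlocal error of order $(\Dx)^{1-\lambda}$, which degenerates to $(\Dx)^{1-\lambda}\abs{\log\Dx}$ exactly at $\lambda=\tfrac12$ where the relevant moment integral is logarithmic. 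A Gronwall argument in $t$, the linear feedback coming from the Lipschitz bounds on the flux and on $\sum_k h_k^2$ in \eqref{A5}, then closes the estimate with a constant $C$ independent of $\Dx$. Since the final bound is the maximum of the transport error $\sqrt{\Dx}$ and the nonlocal error $(\Dx)^{1-\lambda}$, and these two exponents coincide precisely at $\lambda=\tfrac12$, one obtains $\sqrt{\Dx}$ for $\lambda<\tfrac12$, $(\Dx)^{1-\lambda}$ for $\lambda>\tfrac12$, and the borderline rate $\sqrt{\Dx}\,\abs{\log\Dx}$ for $\lambda=\tfrac12$, in agreement with the deterministic rate of \cite{Cifani}.
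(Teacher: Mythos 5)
Your proposal reproduces the paper's skeleton correctly --- doubling of variables with smooth entropies, splitting of the kernel at a scale $r$, BV-based consistency estimates against truncated moments of $d\mu$, the reversed hierarchy in which the entropy-smoothing limit is taken \emph{before} the doubling limit so that Kato's inequality \eqref{imp_10} is only invoked once $\eta_\delta'\to\sgn_0$, and the final optimization producing the three rates --- but it has a genuine gap at the stochastic heart of the argument. You double the entropy solution $u$ directly against $u_\Dx$ and assert that ``the martingale terms vanish and the second-order It\^o corrections \dots combine with the cross-covariation''. These two claims are incompatible. Because the time mollifier (the paper's $\rho_{\delta_0}$, with $\supp\rho\subset[-1,0]$) forces $t\le s$, exactly one of the two stochastic integrals is paired with a multiplier evaluated at a \emph{later} time; that pairing is anticipating, its expectation does \emph{not} vanish, and it is precisely this nonzero expectation that generates the cross term $-2\sum_k h_k(\cdot)\,h_k(\cdot)$ needed to complete the square $\sum_k\big(h_k(u)-h_k(u_\Dx)\big)^2\eta_\delta''$ that you then bound by $O(\delta)$ via \eqref{A5}. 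Computing that anticipating expectation requires applying It\^o's formula to the multiplier process, i.e.\ to a solution of a \emph{strong} (pointwise) equation; an entropy solution provides no such formula. This is exactly why the paper never compares $u$ and $u_\Dx$ directly: it inserts the vanishing-viscosity solution $u_\eps$ (further mollified to $u_\eps^\gamma$), obtains the noise--noise interaction through Lemma \ref{stochastic} (quoting \cite[Lemma 3.2]{BhKoleyVa1}), runs the entire doubling argument between $u_\eps^\gamma$ and $u_\Dx$ in \eqref{stochas_entropy_1-levy-d}--\eqref{stoc_entropy_2}, and only sends $\eps\to0$ at the very end. Your route could in principle be repaired without the viscous layer by exploiting that $u_\Dx$ itself solves the SDE system \eqref{scheme}, applying It\^o's formula to $s\mapsto\varsigma_l(u_\Dx(s,y)-k)$ to resolve the anticipating term; but that substitute lemma is the crux, and nothing in your text acknowledges that a strong formulation on one side is needed at all.

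A secondary inconsistency concerns the doubling scales. You claim the doubling mollification ``collapses at no cost'' once $l\downarrow0$ is taken, i.e.\ you fully collapse $x=y$. But the scheme's consistency errors --- the flux term of Lemma \ref{flux} and the nonlocal terms of Lemmas \ref{fractional_upper}--\ref{fractional_lower} --- are of size $\Dx/\delta$, $\Dx^{2-2\lambda}/\delta$, $\frac{1}{\delta}\Dx|\ln\Dx|$, where $\delta$ is the \emph{spatial} doubling scale; they blow up if that scale is sent to zero at fixed $\Dx$. Only the time-doubling scale $\delta_0$ and the $k$-mollification scale $l$ can be removed for free (and in the paper's order: $\delta_0$ first, $l$ after the entropy limit); the spatial parameter $\delta$ must survive every limit and be optimized at the end, $\delta=\sqrt{\Dx}$ for $\lambda\le\frac12$ and $\delta=\Dx^{1-\lambda}$ for $\lambda>\frac12$. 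Indeed, your own final rates $\sqrt{\Dx}$, $\sqrt{\Dx}|\log\Dx|$, $\Dx^{1-\lambda}$ can only arise from this balancing, which contradicts having collapsed $x=y$ beforehand.
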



\section{\textit{A Priori} Estimates}
\label{sec:apriori&convergence}
This section is devoted to the derivation of \emph{a priori} estimates for the approximate solutions $u_{\Delta x}(t,x)$ under the usual assumptions.
 \subsection{Uniform Moment Estimates} 
As we mentioned earlier, to ensure the convergence of the sequence of approximate solutions, one needs to obtain uniform moment estimates on it. In what follows, we start with the following simple but useful lemma which is essentially a discrete version of the entropy inequality \eqref{defi:stochentropsol}.


\begin{lemma}
\label{lem:cellentropyinequality}
Let $\eta$ be an even, $C^2(\R)$ convex function with a bounded second derivative. Let $u_\alpha(t)$ be the approximate solution generated by the finite volume scheme \eqref{eq:levy_stochconservation_ discrete_laws_1}. Then $u_\alpha(t)$ satisfies the following cell entropy inequality: for any $k \in \R$, 
\begin{align*}
& d\eta(u_\alpha(t)-k) + \frac{1}{\Dx}\sum_{\beta \in \Z^d} \eta^\prime(u_\alpha(t)-k) G_{\alpha,\beta} \,A(u_\beta(t))\,dt 
\\ &+  
\sum^d_{i=1} \frac{|\eta^\prime(u_\alpha(t)-k)|}{\Dx}\bigg\{\Big(F_i[u_\alpha(t)\top k,u_{\alpha+e_i}(t) \top k]-F_i[u_{\alpha-e_i}(t) \top k, u_\alpha(t) \top k]\Big)
\\ & \hspace{5cm}-\Big(F_i[u_\alpha(t)\bot k,u_{\alpha+e_i}(t) \bot k]-F_i[u_{\alpha-e_i}(t) \bot k, u_\alpha(t) \bot k]\Big)\bigg\}\,dt 
\\
& \le \sigma(u_\alpha(t))\eta^\prime(u_\alpha(t)-k)\,dW(t) + \frac{1}{2} \sigma^2(u_\alpha(t))\eta^{\prime\prime}(u_\alpha(t)-k)\,dt,
\end{align*}
for all $\alpha \in \mathbb{Z}^d$ and almost all $\omega \in \Omega$. Here $a \top b:= \max\{a,b\}$ and $a \bot b:= \min\{a,b\}$.
\end{lemma}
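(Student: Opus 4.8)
The plan is to apply It\^o's formula to the smooth map $u \mapsto \eta(u-k)$ along the semimartingale $u_\alpha(t)$ defined by \eqref{scheme}, and then to replace the resulting convective contribution by its numerical entropy flux, exploiting the monotonicity of $F_i$. For each fixed $\alpha$, the scheme \eqref{scheme} exhibits $u_\alpha$ as an It\^o process whose drift is $-\frac{1}{\Dx}\sum_{i=1}^d [F_i(u_\alpha, u_{\alpha+e_i}) - F_i(u_{\alpha-e_i}, u_\alpha)] - \hat{\mathfrak{L}}_\lambda[A(u_\Dx)]_\alpha$ and whose martingale part is $\sigma(u_\alpha)\,dW(t) = \sum_{k\ge 1} h_k(u_\alpha)\,d\beta_k(t)$. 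Since $\eta \in C^2(\R)$ with bounded second derivative, and since the $\beta_k$ are mutually independent (so that the quadratic variation of the noise is $\mathbb{G}^2(u_\alpha)\,dt$), It\^o's formula applies and yields an \emph{exact} identity for $d\eta(u_\alpha(t)-k)$: the drift produces $\eta'(u_\alpha-k)$ multiplied by the convective and non-local terms, the martingale part produces $\sigma(u_\alpha)\eta'(u_\alpha-k)\,dW(t)$, and the second-order correction produces $\tfrac12 \mathbb{G}^2(u_\alpha)\eta''(u_\alpha-k)\,dt$. The non-local term and both stochastic terms of the asserted inequality are then precisely those emerging from It\^o's formula, so no further work is needed for them.

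The only genuine inequality concerns the convective term. Here I would use that $\eta$ is even and convex, so $\eta'$ is odd with $\eta'(0)=0$ and $\eta'$ nondecreasing; hence $\sgn\,\eta'(s) = \sgn(s)$ and therefore $\eta'(u_\alpha-k) = \sgn(u_\alpha-k)\,|\eta'(u_\alpha-k)|$. Factoring out the nonnegative quantity $|\eta'(u_\alpha-k)|$, it suffices to prove the classical deterministic discrete cell entropy inequality, for each $i$,
\begin{align*}
& \Big[F_i(u_\alpha \top k, u_{\alpha+e_i}\top k) - F_i(u_{\alpha-e_i}\top k, u_\alpha \top k)\Big] - \Big[F_i(u_\alpha \bot k, u_{\alpha+e_i}\bot k) - F_i(u_{\alpha-e_i}\bot k, u_\alpha \bot k)\Big] \\
& \qquad \le \sgn(u_\alpha-k)\Big[F_i(u_\alpha, u_{\alpha+e_i}) - F_i(u_{\alpha-e_i}, u_\alpha)\Big].
\end{align*}
I would verify this by the standard case analysis on the sign of $u_\alpha-k$, treating $u_\alpha > k$, $u_\alpha < k$, and $u_\alpha = k$ separately; in each case one regroups the four flux evaluations into three brackets and checks that each is nonnegative using the monotonicity of $F_i$ (non-decreasing in the first slot, non-increasing in the second) together with the consistency $F_i(k,k) = f_i(k)$. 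Multiplying the resulting inequality by $|\eta'(u_\alpha-k)|/\Dx \ge 0$ and summing over $i$ converts the exact It\^o identity into the claimed cell entropy inequality.

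I expect the case analysis for the convective entropy flux to be the main (though routine) obstacle, since one must carefully track which of the min/max arguments dominate $k$ in order to invoke the correct monotonicity of $F_i$; the distinctive features of the paper---the fractional operator and the multiplicative noise---enter this lemma only trivially, as they pass through It\^o's formula unchanged. A secondary point to make precise is the factorization $\eta'(u_\alpha-k) = \sgn(u_\alpha-k)|\eta'(u_\alpha-k)|$, which relies solely on $\eta$ being even and convex, together with the identification of the correction term $\tfrac12 \mathbb{G}^2(u_\alpha)\eta''(u_\alpha-k)\,dt$, written as $\tfrac12 \sigma^2(u_\alpha)\eta''(u_\alpha-k)\,dt$ in the statement.
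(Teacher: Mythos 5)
Your proposal is correct and follows essentially the same route as the paper: Itô's formula applied to $\eta(u_\alpha(t)-k)$, the factorization $\eta'(u_\alpha-k)=\sgn(u_\alpha-k)\,|\eta'(u_\alpha-k)|$ (valid since $\eta$ is even and convex), and then the purely deterministic flux inequality proved by a sign case analysis using the monotonicity of $F_i$ in each argument, with the nonlocal and stochastic terms passing through Itô's formula untouched. The paper's proof handles the case $u_\alpha>k$ by exactly the regrouping you describe (showing the ``top'' bracket is dominated by $\sgn(u_\alpha-k)[F_i(u_\alpha,u_{\alpha+e_i})-F_i(u_{\alpha-e_i},u_\alpha)]$ and the ``bot'' bracket is nonnegative) and leaves $u_\alpha\le k$ as symmetric, so no substantive difference exists between the two arguments.
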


\begin{proof}
 A simple application of It\^{o}'s formula applied to $\eta(u_\alpha(t)-k)$, where
$u_\alpha(t)$ satisfies the semi-discrete finite volume scheme \eqref{eq:levy_stochconservation_ discrete_laws_1}, leads to
 \begin{align}  \label{ito-discreate}
&   d\eta(u_\alpha(t)-k) + \frac{1}{{\Delta x}} \eta^\prime(u_\alpha(t)-k)\sum^d_{i=1} \Big( F_i(u_{\alpha}(t),u_{\alpha+e_i}(t))-F_i(u_{\alpha-e_i}(t),u_\alpha(t)) \Big)\,dt 
\\&  + \frac{1}{\Dx}\sum_{\beta \in \Z^d} \eta^\prime(u_\alpha(t)-k) G_{\alpha,\beta} \,A(u_\beta(t))\,dt 
 = \sigma(u_\alpha(t))\eta^\prime(u_\alpha(t)-k)\,dW(t) + \frac{1}{2} \sigma^2(u_\alpha(t))\eta^{\prime\prime}(u_\alpha(t)-k)\,dt. \notag
 \end{align} 
To manipulate terms coming from the flux $F_i$, we first observe that, for each $i=1,2,\cdots, d$ and any reals $u$, $v$ and $w$, we have
\begin{align*}
\eta^\prime(u-k)\Big[F_i(u,v)-F_i(w,u)]=|\eta^\prime(u-k)|\, \sign_0(u-k)\Big[F_i(u,v)-F_i(w,u)\Big].
\end{align*}
To simplify the notations, we denote $A:=\sign_0(u-k)\Big[ F_i[u,v]-F_i[w,u]\Big]$. Then for $u>k$, since $F_i$ is non-decreasing with respect to its first argument and non-increasing with respect to its second one, 
\begin{align*}
A =& F_i[(u-k)^++k,(v-k)+k]-F_i[(w-k)+k,(u-k)^++k] 
\\ \geq& F_i[(u-k)^++k,(v-k)^++k]-F_i[(w-k)^++k,(u-k)^++k] = F_i[u\top k,v \top k]-F_i[w \top k, u \top k].
\end{align*}
Moreover, 
\begin{align*}
F_i[u\bot k,v \bot k]-F_i[w \bot k, u \bot k]=&F_i[-(u-k)^-+k,-(v-k)^-+k]-F_i[-(w-k)^-+k,-(u-k)^-+k] 
\\ =&
F_i[k,k-(v-k)^-]-F_i[k-(w-k)^-,k] \geq F_i[k,k]-F_i[k,k]=0.
\end{align*}
Therefore, we conclude that 
\begin{align}
\label{001}
|\eta^\prime(u-k)|A \geq |\eta^\prime(u-k)|\bigg\{\Big(F_i[u\top k,v \top k]-F_i[w \top k, u \top k]\Big)-\Big(F_i[u\bot k,v \bot k]-F_i[w \bot k, u \bot k]\Big)\bigg\}.
\end{align}
A similar calculation reveals that the above inequality \eqref{001} also holds for $u\le k$.
%
This concludes the proof of the lemma.
\end{proof}
Now we are ready to prove uniform moment estimates. In what follows, we first state and prove the following lemma:
\begin{lemma}
Let the assumptions $\eqref{A1}-\eqref{A5}$ hold, and $u_{\Delta x}(t,x)$ be the approximate solution generated by the semi-discrete finite volume scheme \eqref{eq:levy_stochconservation_ discrete_laws_1}. Then, we have
\begin{align}
 \sup_{{\Delta x} >0}\sup_{0\le t \le T} \E \Big[||u_{\Delta x}(t,\cdot)||_p^p\Big] & \leq M^{\frac{p-1}{p}}\|u_0\|^{\frac1p}_{L^1(\R^d)}, \,\, \text{for} ~~ p\in \mathbb{N},~ p\ge 1.\label{uni:moment}
 \\
\|u_{\Delta x}(\omega, t,\cdot)\|_{L^\infty(\R^d)} & \le M,\ \forall t \geq 0,\, \mathbb{P}-a.s.,
\label{imp}
\end{align}
where $M$ is defined in assumption \ref{A5}. 

\end{lemma}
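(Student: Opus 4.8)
The two bounds are of different natures: \eqref{imp} is a pathwise (a.s.) $L^\infty$ estimate and is really the crux, while the moment bound \eqref{uni:moment} then follows by combining an $L^1$-stability estimate with \eqref{imp} and interpolation. The main tools are the cell entropy inequality of Lemma~\ref{lem:cellentropyinequality}, the structural identities for $G_{\alpha,\beta}$ from Remark~\ref{remark_imp} (symmetry, nonpositivity off the diagonal, zero row sums), and the support/growth hypotheses on $\sigma$ in \ref{A5}.

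For \eqref{imp}, the plan is to choose an even, convex $\eta \in C^2(\R)$ with bounded second derivative that vanishes exactly on $[-M,M]$, e.g. $\eta(u)=\big((|u|-M)_+\big)^2$, and to apply It\^o's formula to $\eta(u_\alpha(t))$ using scheme \eqref{scheme}. The decisive point is that $\eta'$ and $\eta''$ are supported in $\{|u|\ge M\}$ while $\sigma(u)=0$ for $|u|>M$ by \ref{A5}; hence $\sigma(u_\alpha)\eta'(u_\alpha)\equiv 0$ and $\mathbb{G}^2(u_\alpha)\eta''(u_\alpha)\equiv 0$, so both the stochastic integral and the It\^o correction drop out and the resulting inequality is deterministic. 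Summing over $\alpha$ with weight $\Dx^d$, the convective contribution is nonnegative by the monotone-flux entropy dissipation already exploited in Lemma~\ref{lem:cellentropyinequality}, and the nonlocal contribution is nonnegative after the symmetrization $\sum_{\alpha,\beta}G_{\alpha,\beta}\eta'(u_\alpha)A(u_\beta)=\tfrac12\sum_{\alpha,\beta}G_{\alpha,\beta}\big(\eta'(u_\alpha)-\eta'(u_\beta)\big)\big(A(u_\beta)-A(u_\alpha)\big)\ge 0$, which uses $G_{\alpha,\beta}=G_{\beta,\alpha}\le 0$ for $\alpha\neq\beta$, the zero row-sum identity, and the monotonicity of $\eta'$ and $A$. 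Thus $t\mapsto \Dx^d\sum_\alpha\eta(u_\alpha(t))$ is nonincreasing; since $u_\alpha(0)$ is a cell average of $u_0$ and $\|u_0\|_{L^\infty}\le M$, we have $\eta(u_\alpha(0))=0$, whence $\eta(u_\alpha(t))=0$ and $|u_\alpha(t)|\le M$ for all $t$, $\mathbb{P}$-a.s.

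For the $L^1$ part of \eqref{uni:moment}, I would run the same computation with $k=0$ and a smooth, even, convex approximation $\eta_\delta$ of $|\cdot|$, now taking expectations after summing over $\alpha$. The stochastic integral has zero mean, the convective and nonlocal terms are again nonnegative, and the only term requiring quantitative care is the It\^o correction $\tfrac12\,\E\!\int_0^t\Dx^d\sum_\alpha \mathbb{G}^2(u_\alpha)\eta_\delta''(u_\alpha)\,ds$: using $\mathbb{G}^2(r)\le K r^2$ from \ref{A5} together with $h_k(0)=0$, and the fact that $\eta_\delta''$ is $O(1/\delta)$ and supported in $\{|u|\lesssim\delta\}$, this term is $O(\delta)$ and vanishes as $\delta\downarrow 0$. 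Passing to the limit gives $\E\big[\|u_\Dx(t,\cdot)\|_{L^1}\big]\le \|u_\Dx(0,\cdot)\|_{L^1}\le \|u_0\|_{L^1(\R^d)}$, the last inequality by Jensen applied to the cell averaging. Combining this with \eqref{imp} through $\int_{\R^d}|u_\Dx|^p\,dx\le M^{p-1}\int_{\R^d}|u_\Dx|\,dx$, then taking $p$-th roots and expectations and using Jensen for the concave map $x\mapsto x^{1/p}$, yields the stated moment bound uniformly in $\Dx$.

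The step I expect to be the main obstacle is the vanishing of the noise-related contributions. For \eqref{imp} it hinges on the exact matching of the supports of $\sigma$ and of $(\eta',\eta'')$, and for \eqref{uni:moment} on the quantitative bound $r^2\eta_\delta''(r)=O(\delta)$ combined with the quadratic growth control of $\mathbb{G}^2$; keeping all constants $\Dx$-independent (which is automatic here, as the flux and nonlocal dissipation terms are sign-definite with no $\Dx$-dependent prefactor) is what ultimately delivers uniformity. A secondary technical point is justifying the discrete summation by parts for the flux term and the absolute convergence of the doubly-indexed nonlocal sum, both of which are guaranteed by the $\ell^2(\Z^d)$-regularity of the scheme solution furnished by the solvability discussion following \eqref{scheme}.
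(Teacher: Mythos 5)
Your proposal is correct in substance, and for the pathwise $L^\infty$ bound \eqref{imp} it takes a genuinely different route from the paper. The paper does not obtain the convective term's sign by summation: it first perturbs the scheme, replacing $A(u)$, $f_i(u)$ and $F_i(u,v)$ by $A(T_M(u))$, $f_i(T_M(u))$ and $F_i(T_M(u),T_M(v))$ with $T_M$ the truncation at $\pm M$, applies It\^o's formula to $\eta(u_\alpha-M)$ with $\eta$ convex and vanishing on $(-\infty,0]$, and exploits that whenever $\eta'(u_\alpha-M)\neq 0$ one has $T_M(u_\alpha)=M$, so monotonicity of $F_i$ makes the flux contribution nonnegative \emph{cell by cell}; the resulting bound for the truncated scheme is then transferred to the original one by uniqueness of solutions of the SDE system \eqref{scheme}. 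You instead keep the original scheme, take the even entropy vanishing on $[-M,M]$, and get nonnegativity of the convective contribution only \emph{after} summation over $\alpha$, via entropy dissipation plus telescoping. That is viable, but be aware that Lemma~\ref{lem:cellentropyinequality} does not literally deliver this step: its right-hand side carries the factor $|\eta'(u_\alpha-k)|$ and is tailored to the Kru\v{z}kov limit, so for your $\eta$ you must either superpose one-sided Kru\v{z}kov entropies $(u-k)_+$ and $(u+k)_-$ with $k\ge M$ (whose cell inequalities are essentially what the proof of Lemma~\ref{lem:cellentropyinequality} establishes) or redo that computation directly; the paper's truncation avoids both this and the justification of the infinite telescoping, at the price of the extra uniqueness argument. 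Your treatment of the noise (support matching with $\sigma$, after smoothing $\eta$ so that It\^o's formula applies), of the nonlocal term (symmetrization, identical to \eqref{positif}), your $L^1$ estimate via $\eta_\delta$ and the vanishing It\^o correction, and the interpolation $\int_{\R^d}|u_\Dx|^p\,dx\le M^{p-1}\int_{\R^d}|u_\Dx|\,dx$ all coincide with the paper's proof. One shared caveat: both your argument and the paper's require $\|u_0\|_{L^\infty(\R^d)}\le M$ (so that $\eta(u_\alpha(0))=0$, respectively $\eta(u_\alpha(0)-M)=0$), an assumption that is implicit rather than stated in \ref{A1}--\ref{A5}.
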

\begin{proof}
For a given convex function as in Lemma \ref{lem:cellentropyinequality} and $k=0$, after taking the expectation, we have
\begin{align}\label{toto}
&\E [\eta(u_\alpha(t))] + \frac{1}{\Dx} \int_0^t\sum_{\beta \in \Z^d} \E[\eta^\prime(u_\alpha(s)) G_{\alpha,\beta} \,A(u_\beta(s))]\,ds\notag
\\ &\qquad + \frac{1}{{\Delta x}} \sum^d_{i=1}\E \Big[\int_0^t |\eta^\prime(u_\alpha(s))|\bigg\{\Big(F_i[u^+_\alpha(s),u^+_{\alpha+e_i}(s)]-F_i[u^+_{\alpha-e_i}(s), u^+_\alpha(s)]\Big)  \notag
\\ & \hspace{4cm}-\Big(F_i[-u^-_\alpha(s),-u^-_{\alpha+e_i}(s)]-F_i[-u^-_{\alpha-e_i}(s), -u^-_\alpha(s)]\Big)\bigg\}\,ds \Big]\notag \\
& \hspace{2cm} \le \frac{1}{2} \int_0^t \E[\sigma^2(u_\alpha(s))\eta^{\prime\prime}(u_\alpha(s))]\,ds + \eta(u_\alpha(0)).
\end{align}
Then, observe that the regularity of $\eta$ can be relaxed from $C^2$ to $C^1$, with $\eta^\prime$ being Lipschitz-continuous (and $\eta'(0)=0$).  We leverage this observation to conclude
\begin{align}\label{tutu}
&\Big[ \sum_{\alpha \in \Z^d} \sum_{\beta \in \Z^d} |\eta^\prime(u_\alpha)| |G_{\alpha,\beta}| \, |A(u_\beta)|\Big]^2  \nonumber
 =
\Big[\sum_{\beta \in \Z^d} |A(u_\beta)| \sum_{\alpha \in \Z^d}  |\eta^\prime(u_\alpha)| |G_{\alpha,\beta}|\Big]^2 
\\ \leq & 
\Big[\sum_{\beta \in \Z^d} |A(u_\beta)|^2\Big]\Big[\sum_{\beta \in \Z^d}\Big( \sum_{\alpha \in \Z^d}  |\eta^\prime(u_\alpha)| |G_{\alpha,\beta}|\Big)^2\Big]
\leq 
C(A,\eta)\|(u_\beta)\|_{\ell^2(\Z^d)}^2 \Big[\sum_{\beta \in \Z^d}\Big( \sum_{\alpha \in \Z^d}  |u_\alpha| |G_{\alpha,\beta}|\Big)^2\Big]  \nonumber
\\ \leq &
C(A,\eta)\|(u_\beta)\|_{\ell^2(\Z^d)}^2 \Big[\sum_{\alpha \in \Z^d}  |u_\alpha|^2\Big] \Big[\sum_{\beta \in \Z^d} \sum_{\alpha \in \Z^d}  |G_{\alpha,\beta}|^2\Big] < +\infty, \quad \mathbb{P}-\text{a.s}.
\end{align}
Moreover, making use of Remark~\ref{remark_imp} and \eqref{tutu}, one has that
\begin{align*}
\sum_{\alpha \in \Z^d} \sum_{\beta \in \Z^d} \eta^\prime(u_\alpha) G_{\alpha,\beta} \,A(u_\beta)
&= \sum_{\alpha \in \Z^d} \sum_{\beta \in \Z^d} \eta^\prime(u_\alpha) G_{\beta,\alpha} \,(A(u_\beta) - A(u_\alpha)) \\
\sum_{\alpha \in \Z^d} \sum_{\beta \in \Z^d} \eta^\prime(u_\beta) G_{\alpha,\beta} \,A(u_\alpha)
&= \sum_{\alpha \in \Z^d} \sum_{\beta \in \Z^d} \eta^\prime(u_\beta) G_{\alpha,\beta} \,(A(u_\alpha) - A(u_\beta)) = - \sum_{\alpha,\beta \in \Z^d} \eta^\prime(u_\beta) G_{\alpha,\beta} \,(A(u_\beta) - A(u_\alpha)). 
\end{align*}
This implies that
\begin{align}\label{positif}
\sum_{\alpha \in \Z^d} \sum_{\beta \in \Z^d} \eta^\prime(u_\alpha) G_{\alpha,\beta} \,A(u_\beta)
= \frac12 \sum_{\alpha \in \Z^d} \sum_{\beta \in \Z^d} (\eta^\prime(u_\alpha) - \eta^\prime(u_\beta)) G_{\alpha,\beta} \,(A(u_\beta) - A(u_\alpha)) \ge 0.
\end{align}
Assuming that $\eta(0)=0$ and since $(u_\alpha) \in C([0,T],\ell^2(\Z^d))$, one gets
\begin{align}\label{tata}
&\sum_{\alpha\in \Z^d}\E[\eta(u_\alpha(t))] + \frac{1}{\Dx} \int_0^t\sum_{\alpha \in \Z^d}\sum_{\beta \in \Z^d} \E[\eta^\prime(u_\alpha(s)) G_{\alpha,\beta} \,A(u_\beta(s))]\,ds \notag
\\ & \qquad +
\frac{1}{{\Delta x}} \sum^d_{i=1}\sum_{\alpha \in \Z^d}\E \Big[\int_0^t |\eta^\prime(u_\alpha(s))|\bigg\{\Big(F_i[u^+_\alpha(s),u^+_{\alpha+e_i}(s)]-F_i[u^+_{\alpha-e_i}(s), u^+_\alpha(s)]\Big)  \notag
\\ & \hspace{4cm}-\Big(F_i[-u^-_\alpha(s),-u^-_{\alpha+e_i}(s)]-F_i[-u^-_{\alpha-e_i}(s), -u^-_\alpha(s)]\Big)\bigg\}\,ds \Big]\notag
\\ & \hspace{2cm} \le  \frac{1}{2} \int_0^t \sum_{\alpha\in \Z^d}\E[\sigma^2(u_\alpha(s))\eta^{\prime\prime}(u_\alpha(s))]\,ds + \sum_{\alpha\in \Z^d }\eta(u_\alpha(0)).
\end{align}
Consider, in \eqref{tata}, that $\eta=\eta_\delta$ is the convex even function such that $\eta_\delta(0)=0$ and $\eta_\delta^{\prime}(x)=\min(1,\frac{x}{\delta})$ for positive $x$. Noting that
\begin{align*}
\Big(F_i[0,u^+_{\alpha+e_i}(s)]-F_i[u^+_{\alpha-e_i}(s), 0]\Big) -\Big(F_i[0,-u^-_{\alpha+e_i}(s)]-F_i[-u^-_{\alpha-e_i}(s), 0]\Big) \leq 0,
\end{align*}
 passing to the limit $\delta \to 0$, one gets for any $t$,   
\begin{align*}
0 \geq &\sum_\alpha\E|u_\alpha(t)| - \sum_\alpha|u_\alpha(0)|
\\ + &
\frac{1}{{\Delta x}} \sum^d_{i=1}\sum_\alpha \E\int_0^t \bigg\{\Big(F_i[u^+_\alpha(s),u^+_{\alpha+e_i}(s)]-F_i[u^+_{\alpha-e_i}(s), u^+_\alpha(s)]\Big)
\\ & \hspace{5cm}-\Big(F_i[-u^-_\alpha(s),-u^-_{\alpha+e_i}(s)]-F_i[-u^-_{\alpha-e_i}(s), -u^-_\alpha(s)]\Big)\bigg\}\,ds.
\end{align*}
Multiplying the above inequality by $\Delta x^d$, we are left with
\begin{align*}
\Delta x^d  \sum_{\alpha\in \Z^d}    \E |u_\alpha(t)|  \le \Delta x^d  \sum_{\alpha\in \Z^d} |u_\alpha(0)|
 \text{ and } 
\sup_{{\Delta x} >0}\sup_{0\le t \le T} \E \Big[||u_{\Delta x}(t,\cdot)||_{L^1(\R^d)}\Big] \leq ||u_0||_{L^1(\R^d)}.
\end{align*}

In order to prove the maximum principle, let us assume for the moment that, for any $i\in \{1,...,d\}$ and any real $u$, $A(u)$ is replaced by $A(T_M(u))$ and $f_i(u)$ by $f_i(T_M(u))$ where $T_M$ denotes the classical truncation: $T_M(u)=\max (-M, \min(u,M))$; and that the numerical flux becomes $F_i(T_M(u),T_M(v))$. This is a monotone numerical flux associated with $f_i(T_M(u))$ and  everything that has been done so far remains valid. 
\\
Note that as soon as the $L^\infty$ estimate in \eqref{imp} will be proved with the perturbation $A(T_M(u))$, $f_i(T_M(u))$ and $F_i(T_M(u),T_M(v))$, then the corresponding solution will be a solution to \eqref{eq:levy_stochconservation_ discrete_laws_1}, and, as the latter is unique, the estimate will hold for the solution $(u_\alpha)$.

Under the conditions described above, apply  It\^{o} formula to $\eta(u_\alpha(t)-M)$, where $\eta((-\infty,0])=\{0\}$ and
$u_\alpha(t)$ satisfies the semi-discrete finite volume scheme \eqref{eq:levy_stochconservation_ discrete_laws_1}. 
\\
As $\eta$ is convex, $\eta'\geq 0$, one gets that
\begin{align*}
\sigma(u_\alpha(t))\eta^\prime(u_\alpha(t)-M)=0, \qquad\sigma^2(u_\alpha(t))\eta^{\prime\prime}(u_\alpha(t)-M)=0
\end{align*}
and

\begin{align*}
&\eta^\prime(u_\alpha(t)-M)\Big( F_i(u_{\alpha}(t),u_{\alpha+e_i}(t))-F_i(u_{\alpha-e_i}(t),u_\alpha(t)) \Big)
\\ =&
\eta^\prime(u_\alpha(t)-M)\Big( F_i(M,u_{\alpha+e_i}(t))-F_i(u_{\alpha-e_i}(t),M) \Big)
\\ \geq &
\eta^\prime(u_\alpha(t)-M)\Big( F_i(M,M\top u_{\alpha+e_i}(t))-F_i(M\top u_{\alpha-e_i}(t),M) \Big) 
= \eta^\prime(u_\alpha-M)\Big( F_i(M,M)-F_i(M,M) \Big)=0,
\end{align*}
so that 
 \begin{align*} 
&   d\eta(u_\alpha(t)-M)+ \frac{1}{\Dx}\sum_{i \in \Z} \eta^\prime(u_\alpha(t)-M) G_{\alpha,\beta} \,A(u_\beta(t))\,dt \leq 0.
 \end{align*} 
By argument similar to \eqref{positif}, $\eta(u_\alpha(t)-M) \leq \eta(u_\alpha(0)-M)=0$ and $u_\alpha \leq M$. Moreover, applying It\^{o} formula to $\eta(u_\alpha(t)+M)$ implies that $u_\alpha \geq -M$.
%
%
%
%
%
%
Finally, assuming $p>1$, the result is proved by using an argument of interpolation.

\end{proof}

\subsection{Spatial Bounded Variation} 
Like its deterministic counterpart, we derive spatial BV bound for the approximate solutions under usual assumptions.

\begin{lemma}\label{lem:bv-estimate}
Let the assumptions be true. Let $u_{\Delta x}(t,x)$ be the finite volume approximations prescribed by the finite difference scheme \eqref{eq:levy_stochconservation_ discrete_laws_1}. Then for any $t>0$
 	 \begin{align}
 	 \E\Big[|u_{\Delta x}(t,\cdot)|_{BV(\R^d)}\Big] \le C \,\E\Big[|u_0(\cdot)|_{BV(\R^d)}\Big].\label{prop:bv}
 	 \end{align}	
 \end{lemma}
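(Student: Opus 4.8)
The plan is to adapt the classical Kru\v{z}kov/TVD argument for monotone schemes to the stochastic fractional setting, following the template of the moment estimate proved above. First I would exploit the translation invariance of the scheme: since the numerical fluxes $F_i$ are applied stencil-wise and the weights satisfy $G_{\alpha,\beta}=G_{\alpha+e_k,\beta+e_k}$ (Remark~\ref{remark_imp}), the shifted family $(u_{\alpha+e_j})_\alpha$ solves the same scheme \eqref{scheme} with data $(u_{\alpha+e_j}(0))$. Subtracting the equation for $u_\alpha$ from that for $u_{\alpha+e_j}$ yields, for $V_\alpha^j:=u_{\alpha+e_j}-u_\alpha$, a closed stochastic difference equation whose drift contains the flux increments and the term $\frac{1}{\Dx^d}\sum_\beta G_{\alpha,\beta}\big(A(u_{\beta+e_j})-A(u_\beta)\big)$, and whose noise is $\big(\sigma(u_{\alpha+e_j})-\sigma(u_\alpha)\big)\,dW$. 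I would then apply It\^o's formula to $\eta_\delta(V_\alpha^j)$, where $\eta_\delta$ is the even convex $C^2$ approximation of $|\cdot|$ already used in the proof of \eqref{uni:moment} (so $\eta_\delta'\to\sgn$ and $\eta_\delta''$ is supported in $\{|\cdot|\le\delta\}$ with $\eta_\delta''\le\delta^{-1}$), integrate in time, sum over $\alpha\in\Z^d$ and $j=1,\dots,d$, multiply by $\Dx^{d-1}$, and take expectations so that the stochastic integral drops out.

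Two drift contributions must be shown to carry the favorable sign. For the convective part, the monotonicity of each $F_i$ (nondecreasing in the first and nonincreasing in the second slot) together with summation by parts gives, exactly as in the deterministic TVD estimate, that the weighted flux sum is nonnegative in the limit $\delta\downarrow0$. For the nonlocal part I would symmetrize using $G_{\alpha,\beta}=G_{\beta,\alpha}$ and $\sum_\beta G_{\alpha,\beta}=0$ precisely as in \eqref{positif}, writing the contribution as $-\tfrac12\Dx^{-1}\sum_{\alpha,\beta}G_{\alpha,\beta}\big(\eta_\delta'(V_\alpha^j)-\eta_\delta'(V_\beta^j)\big)\big(\Delta A_\alpha-\Delta A_\beta\big)$ with $\Delta A_\gamma:=A(u_{\gamma+e_j})-A(u_\gamma)$. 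Since $A$ is nondecreasing (assumption~\ref{A3}), $\sgn(\Delta A_\gamma)=\sgn(V_\gamma^j)$; hence as $\delta\downarrow0$ the bracket tends to $\big(\sgn(V_\alpha^j)-\sgn(V_\beta^j)\big)\big(\Delta A_\alpha-\Delta A_\beta\big)\ge0$, and with $G_{\alpha,\beta}\le0$ for $\alpha\neq\beta$ the whole nonlocal term is nonnegative in the limit and may be discarded. The summability needed to pass the limit inside the double sum follows from $(u_\alpha)\in\ell^1$ (from \eqref{uni:moment}), $\sum_\beta|G_{\alpha,\beta}|\le2G_{\alpha,\alpha}$, and the Lipschitz bound on $A$.

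It remains to control the It\^o correction $\tfrac12\Dx^{d-1}\sum_{\alpha,j}\E\int_0^t\eta_\delta''(V_\alpha^j)\sum_k\big(h_k(u_{\alpha+e_j})-h_k(u_\alpha)\big)^2\,ds$. Using assumption~\ref{A5}, $\sum_k(\cdots)^2\le K|V_\alpha^j|^2$, and on the support of $\eta_\delta''$ one has $\eta_\delta''(V_\alpha^j)|V_\alpha^j|^2\le|V_\alpha^j|$; hence this term is bounded, uniformly in $\delta$, by $\tfrac K2\int_0^t\E\big[\,|u_\Dx(s,\cdot)|_{BV}\big]\,ds$. Letting $\delta\downarrow0$ (monotone convergence on the left, since $\eta_\delta\uparrow|\cdot|$) I obtain $\E\big[\,|u_\Dx(t,\cdot)|_{BV}\big]\le|u_\Dx(0,\cdot)|_{BV}+\tfrac K2\int_0^t\E\big[\,|u_\Dx(s,\cdot)|_{BV}\big]\,ds$, and Gr\"onwall's lemma yields the claim with $C=e^{KT/2}$, after noting that the cell-average projection \eqref{eq:scheme_initial} does not increase the total variation, $|u_\Dx(0,\cdot)|_{BV}\le C|u_0|_{BV}$.

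The main obstacle is twofold. First, the nonlocal term is only signed \emph{in the limit} $\delta\downarrow0$: at fixed $\delta$ the monotonicity of $\eta_\delta'$ in $V^j$ does not match the increments of $A$, so the computation of the hierarchical limits must be organized so that the sign of $G$ and the sign matching $\sgn(\Delta A_\gamma)=\sgn(V_\gamma^j)$ are invoked only after sending $\delta\to0$ — this is the discrete shadow of the incompatibility between smooth entropies and Kato's inequality flagged in the introduction. Second, Gr\"onwall's lemma presupposes that $s\mapsto\E\big[\,|u_\Dx(s,\cdot)|_{BV}\big]$ is finite; I would secure this a priori by carrying out the whole computation either on a truncated lattice or with the range truncation $T_M$ already employed for the maximum principle, deriving a bound independent of the truncation and then removing it by monotone/dominated convergence.
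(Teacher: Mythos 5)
Your proposal is correct and follows essentially the same route as the paper's proof: translation invariance of the weights ($G_{\alpha,\beta}=G_{\alpha+e_k,\beta+e_k}$, Remark~\ref{remark_imp}), It\^o's formula applied to $\eta_\delta(u_{\alpha+e_j}-u_\alpha)$, expectation to remove the martingale, the Crandall--Majda monotonicity argument for the flux increments after $\delta\downarrow 0$, and a sign argument for the nonlocal term invoked only after $\delta\downarrow 0$ (the paper uses the diagonal/off-diagonal splitting $\sgn_0(V_\alpha)G_{\alpha,\alpha}\Delta A_\alpha+\sum_{\beta\neq\alpha}\sgn_0(V_\alpha)G_{\alpha,\beta}\Delta A_\beta\ge\sum_{\beta}G_{\alpha,\beta}|\Delta A_\beta|$ followed by $\sum_\alpha G_{\alpha,\beta}=0$, which is computationally equivalent to your symmetrization in the spirit of \eqref{positif}). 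The one genuine divergence is the treatment of the It\^o correction. You bound it, uniformly in $\delta$, by $\tfrac{K}{2}\int_0^t\E\big[|u_\Dx(s,\cdot)|_{BV}\big]\,ds$ and close with Gr\"onwall, obtaining $C=e^{KT/2}$. The paper instead exploits the additional property built into $\eta_\delta$, namely $u^2\eta_\delta''(u)\le\eta_\delta(u)$, so that on the support of $\eta_\delta''$ one has $\eta_\delta''(V)V^2\le\min\left(|V|,\delta\right)$; by dominated (Lebesgue) convergence the entire correction then \emph{vanishes} as $\delta\to 0$. This sharper treatment dispenses with Gr\"onwall altogether and yields the genuine TVD-in-expectation property $\E\big[|u_\Dx(t,\cdot)|_{BV}\big]\le\E\big[|u_\Dx(0,\cdot)|_{BV}\big]$, i.e.\ a constant independent of $T$ and of the noise constant $K$, which your route does not give. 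Finally, your precaution about the a priori finiteness needed for Gr\"onwall is unnecessary: for fixed $\Dx$ one has the crude bound $|u_\Dx(s,\cdot)|_{BV}\le 2d\,\Dx^{-1}\|u_\Dx(s,\cdot)\|_{L^1(\R^d)}$, whose expectation is finite by \eqref{uni:moment}, so no lattice or range truncation is required.
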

 \begin{proof}
One has
 \begin{align*}
du_\alpha +& \frac{1}{\Dx}\sum^d_{i=1}[F_i(u_\alpha, u_{\alpha+e_i})-F_i(u_{\alpha-e_i}, u_{\alpha})]\,dt  + \frac{1}{\Dx^{d}} \sum_{\beta \in \Z^d} G_{\alpha,\beta}A(u_\beta) \,dt = \sigma(u_\alpha)\,dW(t),
\\
du_{\alpha+e_j} +& \frac{1}{\Dx}\sum^d_{i=1}[F_i(u_{\alpha+e_j}, u_{\alpha+e_i+e_j})-F_i(u_{\alpha-e_i+e_j}, u_{\alpha+e_j})]\,dt  + \frac{1}{\Dx^{d}} \sum_{\beta \in \Z^d} G_{\alpha+e_j,\beta}A(u_\beta) \,dt = \sigma(u_{\alpha+e_j})\,dW(t),
\end{align*}
and one is interested, \textit{via} It\^o's formula, to estimate $\eta(u_{\alpha+e_j}-u_{\alpha})$ and more precisely, the limit when $\eta$ converges to the absolution value function. 

Note, by Remark \ref{remark_imp}, that 
\begin{align*}
\sum_{\beta \in \Z^d} G_{\alpha+e_j,\beta}A(u_\beta) - \sum_{\beta \in \Z^d} G_{\alpha,\beta}A(u_\beta) = \sum_{\beta \in \Z^d} G_{\alpha,\beta-e_j}A(u_\beta) - \sum_{\beta \in \Z^d} G_{\alpha,\beta}A(u_\beta) = \sum_{\beta \in \Z^d} G_{\alpha,\beta}[A(u_\beta+e_j)-A(u_\beta)],
\end{align*}
so that, for a suitable regular approximation of $\eta=\eta_\delta$\footnote{the convex even function such that $\eta_\delta(0)=0$ and $\eta_\delta^{\prime}(x)=\min(1,\frac{x}{\delta})$ for positive $x$, $0 \leq \eta(u) \leq \frac1\delta u^2$ and $u^2\eta_\delta^{\prime\prime}(u) \leq \eta_\delta(u)$}, one has, for any $t$,  that 
\begin{align*}
&\E\eta(u_{\alpha+e_j}-u_{\alpha})(t) + \frac{1}{\Dx^d}\sum_{\beta \in \Z^d} \E\int_0^t \eta^\prime(u_{\alpha+e_j}-u_{\alpha})(s)G_{\alpha,\beta}[A(u_\beta+e_j)-A(u_\beta)](s) ds 
\\ =& 
-\sum^d_{i=1}\E\int_0^t\frac{\eta^\prime(u_{\alpha+e_j}-u_{\alpha})(s)}{\Dx}[F_i(u_{\alpha+e_j}, u_{\alpha+e_i+e_j})-F_i(u_{\alpha-e_i+e_j}, u_{\alpha+e_j})-F_i(u_\alpha, u_{\alpha+e_i})+F_i(u_{\alpha-e_i}, u_{\alpha})](s)\,ds
\\&+
\frac12 \E\int_0^t \eta^{\prime\prime}(u_{\alpha+e_j}-u_{\alpha})(s)[\sigma(u_{\alpha+e_j})-\sigma(u_{\alpha})](s)dy 
+ \E\eta(u_{\alpha+e_j}-u_{\alpha})(0).
\end{align*}
Using Lebesgue's theorem, 
\begin{align*}
\lim_{\delta \to 0}\E\int_0^t \eta^{\prime\prime}(u_{\alpha+e_j}-u_{\alpha})(s)[\sigma(u_{\alpha+e_j})-\sigma(u_{\alpha})](s)dy =0,
\end{align*}
and one is able to pass to the limit on $\eta_\delta$ and replace, above,  $\eta(u)$ by $|u|$ and $\eta^\prime(u)$ by sgn$_0(u)$. 

Note that, by adding and subtracting appropriate terms, we have 
\begin{align*}
& F_i(u_{\alpha+e_j}, u_{\alpha+e_i+e_j})-F_i(u_{\alpha-e_i+e_j}, u_{\alpha+e_j})-F_i(u_\alpha, u_{\alpha+e_i})+F_i(u_{\alpha-e_i}, u_{\alpha})
\\ &= 
\frac{F_i(u_{\alpha+e_j}, u_{\alpha+e_i+e_j}) - F_i(u_{\alpha}, u_{\alpha+e_i+e_j})}{u_{\alpha+e_j}-u_{\alpha}}(u_{\alpha+e_j}-u_{\alpha}) + \frac{F_i(u_{\alpha}, u_{\alpha+e_i+e_j}) - F_i(u_\alpha, u_{\alpha+e_i})}{u_{\alpha+e_i+e_j} - u_{\alpha+e_i}}(u_{\alpha+e_i+e_j} - u_{\alpha+e_i})
\\ &-\Big[ 
\frac{F_i(u_{\alpha-e_i+e_j}, u_{\alpha+e_j}) - F_i(u_{\alpha-e_i}, u_{\alpha+e_j})}{u_{\alpha-e_i+e_j}- u_{\alpha-e_i}}(u_{\alpha-e_i+e_j}- u_{\alpha-e_i}) + \frac{F_i(u_{\alpha-e_i}, u_{\alpha+e_j}) - F_i(u_{\alpha-e_i}, u_{\alpha})}{u_{\alpha+e_j}-u_{\alpha}}(u_{\alpha+e_j}-u_{\alpha})
\Big].
\end{align*}
Therefore, using  that $F_i$ is non-decreasing in its first argument and non-increasing in its second one, 
\begin{align*}
&\E\Big\{-\mathrm{sgn}_0(u_{\alpha+e_j}-u_{\alpha})\Big[ F_i(u_{\alpha+e_j}, u_{\alpha+e_i+e_j})-F_i(u_{\alpha-e_i+e_j}, u_{\alpha+e_j})-F_i(u_\alpha, u_{\alpha+e_i})+F_i(u_{\alpha-e_i}, u_{\alpha}) \Big] \Big\}
\\ \leq &
\E\Big\{-|\eta^\prime(u_{\alpha+e_j}-u_{\alpha})| \Big|\frac{F_i(u_{\alpha+e_j}, u_{\alpha+e_i+e_j}) - F_i(u_{\alpha}, u_{\alpha+e_i+e_j})}{u_{\alpha+e_j}-u_{\alpha}}\Big| |u_{\alpha+e_j}-u_{\alpha}| 
\\ &
+ |\eta^\prime(u_{\alpha+e_j}-u_{\alpha})| \Big|\frac{F_i(u_{\alpha}, u_{\alpha+e_i+e_j}) - F_i(u_\alpha, u_{\alpha+e_i})}{u_{\alpha+e_i+e_j} - u_{\alpha+e_i}}\Big| |u_{\alpha+e_i+e_j} - u_{\alpha+e_i}|
\\ &
+|\eta^\prime(u_{\alpha+e_j}-u_{\alpha})| \Big|\frac{F_i(u_{\alpha-e_i+e_j}, u_{\alpha+e_j}) - F_i(u_{\alpha-e_i}, u_{\alpha+e_j})}{u_{\alpha-e_i+e_j}- u_{\alpha-e_i}}\Big| |u_{\alpha-e_i+e_j}- u_{\alpha-e_i}| 
\\ &
-|\eta^\prime(u_{\alpha+e_j}-u_{\alpha})| \Big|\frac{F_i(u_{\alpha-e_i}, u_{\alpha+e_j}) - F_i(u_{\alpha-e_i}, u_{\alpha})}{u_{\alpha+e_j}-u_{\alpha}}\Big| |u_{\alpha+e_j}-u_{\alpha}| \Big\}
\\ =& \E\Big\{
\Big|\frac{F_i(u_{\alpha-e_i+e_j}, u_{\alpha+e_j}) - F_i(u_{\alpha-e_i}, u_{\alpha+e_j})}{u_{\alpha-e_i+e_j}- u_{\alpha-e_i}}\Big| |u_{\alpha-e_i+e_j}- u_{\alpha-e_i}| 
\\& \hspace{7cm}
-\frac{F_i(u_{\alpha+e_j}, u_{\alpha+e_i+e_j}) - F_i(u_{\alpha}, u_{\alpha+e_i+e_j})}{u_{\alpha+e_j}-u_{\alpha}}\Big| |u_{\alpha+e_j}-u_{\alpha}| \Big\}
\\ &
+ \E\Big\{\Big|\frac{F_i(u_{\alpha}, u_{\alpha+e_i+e_j}) - F_i(u_\alpha, u_{\alpha+e_i})}{u_{\alpha+e_i+e_j} - u_{\alpha+e_i}}\Big| |u_{\alpha+e_i+e_j} - u_{\alpha+e_i}|
\\&
\hspace{7.5cm}-\Big|\frac{F_i(u_{\alpha-e_i}, u_{\alpha+e_j}) - F_i(u_{\alpha-e_i}, u_{\alpha})}{u_{\alpha+e_j}-u_{\alpha}}\Big| |u_{\alpha+e_j}-u_{\alpha}|\Big\}.
\end{align*}
Remark finally that, as usual, the sum over $\alpha \in \Z^d$ of the above right-hand vanishes. 

Concerning the nonlocal part, by Remark \ref{remark_imp}
\begin{align*}
&\sum_{\beta \in \Z^d} \mathrm{sgn}_0(u_{\alpha+e_j}-u_{\alpha})G_{\alpha,\beta}[A(u_\beta+e_j)-A(u_\beta)]
\\ =& 
\mathrm{sgn}_0(u_{\alpha+e_j}-u_{\alpha})G_{\alpha,\alpha}[A(u_\alpha+e_j)-A(u_\alpha)]  +  \sum_{\beta \neq \alpha} \mathrm{sgn}_0(u_{\alpha+e_j}-u_{\alpha})G_{\alpha,\beta}[A(u_\beta+e_j)-A(u_\beta)]
\\ \geq& 
G_{\alpha,\alpha}|A(u_\alpha+e_j)-A(u_\alpha)|  +  \sum_{\beta \neq \alpha} G_{\alpha,\beta}|A(u_\beta+e_j)-A(u_\beta)| = \sum_{\beta \in \Z^d} G_{\alpha,\beta}|A(u_\beta+e_j)-A(u_\beta)|.
\end{align*}
Then, by arguments close to \eqref{tutu}, Fubini's theorem is applicable and 
\begin{align*}
&\E\int_0^t \sum_{\alpha \in \Z^d}\sum_{\beta \in \Z^d} \mathrm{sgn}_0(u_{\alpha+e_j}-u_{\alpha})(s)G_{\alpha,\beta}[A(u_\beta+e_j)-A(u_\beta)](s)ds 
\\ \geq& 
\E\int_0^t  \sum_{\alpha \in \Z^d}\sum_{\beta \in \Z^d} G_{\alpha,\beta}|A(u_\beta+e_j)-A(u_\beta)|(s) ds
= \E\int_0^t  \sum_{\beta \in \Z^d} \sum_{\alpha \in \Z^d}G_{\alpha,\beta}|A(u_\beta+e_j)-A(u_\beta)|(s) ds=0.
\end{align*}

In conclusion, after summing over $j$, we get
 \begin{align*}
\E\left[ \sum^d_{j=1}\sum_{\alpha \in \Z^d} \big|u_{\alpha+e_j}(t)-u_\alpha(t)\big|\right] \le \E\left[\sum^d_{j=1} \sum_{\alpha \in \Z^d} \big|u_{\alpha+e_j}(0)-u_{\alpha}(0)\big|\right]. \\
 \end{align*}

 Note that, in view of the lower semi-continuity property and the positivity of the total variation $TV_x$,  $\E[TV_x(u)]$ makes sense for 
 any $u\in L^1(\Omega \times \R)$.
 Since $u_0 \in BV(\R)$, we conclude that, for all $t>0$
 \begin{align*}
  \E\Big[ TV_x(u_{\Delta x}(t))\Big] \le  \E\Big[ TV_x(u_0)\Big]. 
 \end{align*}
Again, since $ \E\Big[\|u_{\Delta x}(t,\cdot)\|_{L^1(\R)}\Big] \le  C \,\E\Big[ \|u_0(\cdot)\|_{L^1(\R^d)}\Big] $, we arrive at the following conclusion that the approximate solution $u_{\Delta x}(t,x)$ lies in the spatial BV class and satisfies \eqref{prop:bv}. This completes the proof.
 \end{proof}

\section{Proof of the Main Result: Theorem \ref{Main_Theorem}}
\label{sec:main-thm}

We start by introducing a special class of entropy functions, 
called convex approximation of absolute value function. To do so,  let $\eta:\R \rightarrow \R$ be a $C^\infty$ function satisfying 
\begin{align*}
\eta(0) = 0,\quad \eta(-r)= \eta(r),\quad 
\eta^\prime(-r) = -\eta^\prime(r),\quad \eta^{\prime\prime} \ge 0,
\end{align*} 
and 
\begin{align*}
\eta^\prime(r)=
\begin{cases} 
-1,\quad & \text{when} ~ r\le -1,\\
\in [-1,1], \quad & \text{when}~ |r|<1,\\
+1, \quad & \text{when} ~ r\ge 1.
\end{cases}
\end{align*} 
For any $\eps > 0$, define $\eta_\eps:\R \rightarrow \R$ by 
$\eta_\eps(r) = \eps \eta(\frac{r}{\eps})$. 
Then
\begin{align}\label{eq:approx to abosx}
|r|-L_1\eps \le \eta_\eps(r) \le |r|\quad 
\text{and} \quad |\eta_\eps^{\prime\prime}(r)| 
\le \frac{L_2}{\eps} {\bf 1}_{|r|\le \eps},
\end{align} 
where $L_1 := \sup_{|r|\le 1}\big | |r|-\eta(r)\big |$ and 
$L_2 := \sup_{|r|\le 1}|\eta^{\prime\prime} (r)|$. 

\noindent Moreover, for $\eta=\eta_\eps$, we define 
\begin{align*}
\begin{cases}
Q_k^\eta(a,b)= \int_b^a \eta^\prime(r-b) f_k^\prime(r)\,dr, \\
Q^\eta(a,b)=\big(f_1^\eta(a,b),f_2^\eta(a,b),\cdots,f_d^\eta(a,b)\big), \\
Q(a,b)= \sgn(a-b)(f(a)-f(b)) = f(a\top b)-f(a\bot b ).
\end{cases}
\end{align*}
For a small positive number $\eps>0$, we consider the following parabolic perturbation
of \eqref{eq:stoc_con_brown}
\begin{align*}
du_\eps(t,x) -\eps \Delta u_\eps(t,x)\,dt & + \mathfrak{L}_{\lambda}[A(u_\eps(t, \cdot))](x)\,dt  - \mbox{div}_x f(u_\eps(t,x)) \,dt 
= \sigma(u_\eps(t,x))\,dW(t). 
\end{align*}
Following \cite{BhKoleyVa1}, we remark that it has a unique weak solution $u_\eps(t,x)$ with initial data $u_{\eps}(0,x)=u_0^{\eps}(x)\in H^1(\R^d)$, where $u_0^{\eps}$ converges to $u_0$ in $L^2(\R^d)$. 
Notice that $u_\eps \in H^1(\R^d)$, while for technical reasons we require higher regularity of $u_\eps$, therefore, we need to regularize $u_\eps$ by a space convolution. 
Let $\{\rho_\gamma\}_{\gamma}$ be a given mollifier-sequence in $\R^d$. Then following \cite{BhKoleyVa1} we observe that $u^\gamma_\eps:= u_\eps* \rho_\gamma$ satisfies
\begin{eqnarray*}
\partial_t \bigg[u_\eps*\rho_\gamma - \int_0^t \sigma(u_\eps)*\rho_\gamma  dW \bigg]- \big[\eps \Delta(u_\eps* \rho_\gamma)-\mathfrak{L}[A(u_\eps)*\rho_\gamma]+ \Div f(u_\eps)*\rho_\gamma \big]=0.
\end{eqnarray*}

Let $\rho$ and $\varrho$ be the standard nonnegative 
mollifiers on $\R$ and $\R^d$ respectively such that 
$\supp(\rho) \subset [-1,0]$ and $\supp(\varrho) = \overline{B_1}(0)$.  
We define  $\rho_{\delta_0}(r) = \frac{1}{\delta_0}\rho(\frac{r}{\delta_0})$ 
and $\varrho_{\delta}(x) = \frac{1}{\delta^d}\varrho(\frac{x}{\delta})$, 
where $\delta$ and $\delta_0$ are two positive parameters.  Given a  nonnegative test 
function $\psi\in C_c^{1,2}([0,\infty)\times \rd)$ and two 
positive constants $\delta$ and $ \delta_0 $, we define 
\begin{align}
\label{test_function}
\phi_{\delta,\delta_0}(t,x, s,y) = \rho_{\delta_0}(t-s) 
\,\varrho_{\delta}(x-y) \,\psi(t,x).
\end{align}
Clearly $ \rho_{\delta_0}(t-s) \neq 0$ only 
if $s-\delta_0 \le t\le s$ and hence $\varphi_{\delta,\delta_0}(t,x; s,y)= 0$, 
outside  $s-\delta_0 \le t\le s$. 
Moreover, let $\varsigma$ be the standard symmetric 
nonnegative mollifier on $\R$ with support in $[-1,1]$ 
and $\varsigma_l(r)= \frac{1}{l} \varsigma(\frac{r}{l})$, 
for $l > 0$.

We multiply the entropy inequality \eqref{inq:entropy-solun-discr} by $\varsigma_l(u_{\Dy}(s,y)-k)$, take the expectation and integrate with respect to $s,y$ and $k$ over $\Pi_T \times \R$ to get the following form,
\begin{align}
& 0\le \E \left[\int_{\R\times\Pi_T\times\R^d} \eta(u^{\gamma}_\eps(0,x)-k)\,
\phi_{\delta,\delta_0}(0,x,s,y) \,\varsigma_l(u_{\Dy}(s,y)-k) \,dx\,dy\,ds\,dk\right] \notag 
\\&  
+ \E \left[\int_{\R \times \Pi_T\times\Pi_T} \eta(u^\gamma_\eps(t,x)-k)\partial_t \phi_{\delta,\delta_0}(t,x,s,y)\,
\varsigma_l(u_{\Dy}(s,y)-k)\,dx\,dt\,dy\,ds\,dk \right]\notag 
\\ & 
+  \int_{\R \times \Pi_T} \E\left[\sum_{n\ge 1}\int_{\Pi_T}
  g_n(u^\gamma_\eps(t,x))\eta^\prime (u^\gamma_\eps(t,x)-k)\, \phi_{\delta,\delta_0}(t,x,s,y)\,d\beta_k(t) \,dx\,\right] \varsigma_l(u_{\Dy}(s,y)-k)\,dy\,ds\,dk  \notag  
 \\ &
 + \frac{1}{2}\, \E \left[ \int_{\R \times \Pi_T\times\Pi_T} \mathbb{G}^2(u^\gamma_\eps(t,x))\eta^{\prime\prime} (u^\gamma_\eps(t,x) -k)\, \phi_{\delta,\delta_0}(t,x,s,y)\, \varsigma_l(u_{\Dy}(s,y)-k)\,dx\,dt\,dy\,ds\,dk \right] \notag  
\\ & 
+\E \left[\int_{\R \times\Pi_T\times\Pi_T} 
 Q^\eta(u^\gamma_\eps(t,x),k) \cdot \grad_x  \phi_{\delta,\delta_0}(t,x;s,y)\, \varsigma_l(u_{\Dy}(s,y)-k)\,dx\,dt\,dy\,ds\,dk\right]  \notag  
\\& 
- \E \left[\int_{\R \times\Pi_T\times\Pi_T} \mathfrak{L}_{\lambda}^r[A(u_\eps)^\gamma(t,\cdot))](x)\, \phi_{\delta,\delta_0}(t,x,s,y)\, \eta'(u^\gamma_\eps(t,x) -k) \varsigma_l(u_{\Dy}(s,y)-k)\,dx\,dt 
\,dy\,ds\,dk \right] \notag 
\\& 
-  \E \left[\int_{\R \times \Pi_T\times\Pi_T}  \,\mathfrak{L}_{\lambda,r} [A(u_\eps)^\gamma(t,\cdot)](x)\phi_{\delta,\delta_0}(t,x,s,y) \eta'(u^\gamma_\eps(s,y)-k) \, \varsigma_l(u_{\Dy}(s,y)-k)\,dx \,dt\,dy\,ds\,dk \right] \notag 
\\ & 
-\eps \, \E \left[\int_{\R \times \Pi_T\times\Pi_T} \eta^{\prime}(u^\gamma_{\eps}(t,x)-k) \nabla_x u^\gamma_{\eps} (t,x) \cdot \nabla_x \phi_{\delta,\delta_0}(t,x;s,y)\, \varsigma_l(u_{\Dy}(s,y)-k)\,dx\,dt\,dy\,ds\,dk\right]\notag  
 \\[2mm]
& =:  I_1 + I_2 + I_3 +I_4 + I_5 + I_6 + I_7 + I_8. \label{stochas_entropy_1-levy-d}
\end{align}

The corresponding discrete version is obtained by multiplying the discrete entropy inequality \eqref{inq:entropy-solun-discr} by $\varsigma_l(u_\eps(t,x)-k)$, taking the expectation of the result and integrating with respect to $t,x$ and $k$ over $\Pi_T \times \R$ to get, 
\begin{align}
0\le  & \E \left[ \int_{\R\times \Pi_T \times\R^d} \eta \big(u_{\Dy}(0)-k\big)
\phi_{\delta,\delta_0}(t,x,0,y) \varsigma_l(u^\gamma_{\eps}(t,x)-k)\,dy \,dx\,dt\,dk\right] \notag 
\\ & + 
\E \left[\int_{\R \times \Pi_T \times \Pi_T }  \eta(u_{\Dy}(s,y)-k)\partial_s \phi_{\delta,\delta_0}(t,x,s,y)
\varsigma_l(u^\gamma_{\eps}(t,x)-k)\,dy\,ds \,dx\,dt\,dk \right]\notag  
\\  & +  
 \int_{\R \times \Pi_T }\E\left[\sum_n \int_{\Pi_T} g_n(u_{\Dy}(s,y)) \eta^{\prime}(u_{\Dy}(s,y)-k) \phi_{\delta,\delta_0}(t,x,s,y) \,dy\,d\beta_k(s)\right]
 \,\varsigma_l(u^\gamma_{\eps}(t,x)-k)\,dt \,dx \,dk \notag  
 \\& + 
 \frac{1}{2}  \E \left[\int_{\R \times \Pi_T\times\Pi_T}  \mathbb{G}^2(u_{\Dy}(s,y)) \eta^{\prime\prime}(u_{\Dy}(s,y)-k)
\phi_{\delta,\delta_0}(t,x,s,y)\,\varsigma_l(u^\gamma_{\eps}(t,x)-k)  \,dy\,ds\,dt \,dx\,dk \right] \notag  
\\& 
- \E \bigg[\int_{\R \times\Pi_T\times\Pi_T}\hspace{-1cm}  \eta^{\prime}(u_{\Dy}(s,y)-k) \frac{1}{\Dy} \sum^d_{i=1}\Big[F_i(u_{\Dy}(s,y), u_{\Dy}(s,y+\Dy e_i))-F_i(u_{\Dy}(s,y-\Dy e_i), u_{\Dy}(s,y))\Big]
 \notag
  \\[-0.5cm] &\hspace{9 cm} \times
  \phi_{\delta,\delta_0}(t,x,s,y)  \varsigma_l(u^\gamma_{\eps}(t,x)-k)\,ds\,dy\,dx\,dt\,dk\bigg] \notag  
   \\  & 
   - \E \left[\int_{\R \times \Pi_T\times\Pi_T} \mathfrak{L}_{\lambda}^r[A(u_\Dy(s,\cdot))](y)\, \overline \phi^y_{\delta,\delta_0}(t,x,s,y)\, \eta'(u_\Dy(s,y) -k) \varsigma_l(u^\gamma_\eps(t,x)-k)\,dy\,ds 
\,dx\,dt\,dk \right] \notag  
\\ & 
-  \E \left[\int_{\R \times\Pi_T\times\Pi_T} A^\eta_k(u_{\Dy}(s,y)) \,\mathfrak{L}_{\lambda,r}^{\frac{\Dy}{2}} [\overline \phi^y_{\delta,\delta_0}(t,x,s,\cdot)](y)\varsigma_l(u^\gamma_\eps(t,x)-k)
\,dy\,ds\,dx \,dt\,dk \right] \notag  
\\& 
:= J_1 + J_2 + J_3 + J_4 + J_5 + J_6 + J_7, \label{stoc_entropy_2}
\end{align}
where $\overline \phi^y_{\delta,\delta_0}(t,x,s,y)=\sum_{\alpha \in \Z^d} \frac{1}{\Dy^d}\int_{R_\alpha} \phi_{\delta,\delta_0}(t,x,s,z)\; dz \mathds{1}_{R_\alpha}(y)$.
\\[0.2cm]
Our goal is to estimate the expected value of the $L^1$ difference between $u_{\Dy}$ and $u_{\eps}$ in terms of the small parameters $\delta, \Dy$ and $r$ ($r>\frac{\Dy}{2}$), which are sufficiently small but fixed.

\begin{lemma}\cite{BaVaWit}\label{initial_terms} It holds that
\begin{align*}
\lim_{l \to 0}\lim_{\eta \to |\cdot|}\lim_{\gamma \to 0}\lim_{\delta_0\to 0}( I_1+J_1) &=
\E \left[ \int_{\R^d\times \R^d} \big|u_{\Dy}(0,y)-u_0^{\eps}(x)\big|
 \,\varrho_{\delta}(x-y)\, \psi(0,x) \,dy \,dx\right]. \\
 \lim_{l \to 0}\lim_{\eta \to |\cdot|}\lim_{\gamma \to 0}\lim_{\delta_0\to 0}( I_2+J_2) &=
 \E \bigg[\int_{\R^d\times\Pi_T} \big|u_\eps(t,x)-u_\Dy(t,y)\big| \,\varrho_{\delta}(x-y)\, \partial_t \psi(t,x) \, dx\,dt\,dy \bigg]. 
\end{align*}

\end{lemma}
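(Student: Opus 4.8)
The plan is to establish both identities by the \emph{same} chain of limits $\delta_0\to0$, $\gamma\to0$, $\eta\to|\cdot|$, $l\to0$, exploiting the one-sided support of $\rho$ for the initial terms and an exact algebraic cancellation for the time-derivative terms.

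For $I_1+J_1$, first I would note that $J_1$ vanishes identically. Indeed, in $J_1$ the test function is frozen at $s=0$ and hence carries the factor $\rho_{\delta_0}(t-0)=\rho_{\delta_0}(t)$; since $\supp\rho\subset[-1,0]$ and the time integration runs over $t\in(0,T)$, this factor is zero, so $J_1=0$ for every $\delta_0>0$. It then remains to pass to the limit in $I_1$ alone. Since $\rho_{\delta_0}(-s)\,ds$ is an approximate unit concentrating at $s=0^+$, letting $\delta_0\to0$ and using the path-continuity of $s\mapsto u_{\Dy}(s,y)$ together with the a priori bound \eqref{imp} (and dominated convergence) collapses $s$ to $0$. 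Next $\gamma\to0$ replaces $u^\gamma_\eps(0,\cdot)=u_0^\eps*\rho_\gamma$ by $u_0^\eps$, the limit $\eta\to|\cdot|$ turns $\eta$ into the absolute value uniformly by \eqref{eq:approx to abosx}, and finally $\varsigma_l(u_{\Dy}(0,y)-k)\,dk$ concentrates at $k=u_{\Dy}(0,y)$ as $l\to0$; this produces exactly the claimed initial term.

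For $I_2+J_2$ I would first split $\partial_t\phi_{\delta,\delta_0}$ into its two natural pieces,
\[\partial_t\phi_{\delta,\delta_0}=\rho_{\delta_0}'(t-s)\,\varrho_\delta(x-y)\,\psi(t,x)+\rho_{\delta_0}(t-s)\,\varrho_\delta(x-y)\,\partial_t\psi(t,x),\]
and note that $\partial_s\phi_{\delta,\delta_0}=-\rho_{\delta_0}'(t-s)\,\varrho_\delta(x-y)\,\psi(t,x)$ since $\psi$ is independent of $s$. Writing $I_2=I_2^{(A)}+I_2^{(B)}$ according to the two pieces of $\partial_t\phi_{\delta,\delta_0}$, the term $J_2$ is a single $\rho_{\delta_0}'$ contribution matching $I_2^{(A)}$. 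The crucial observation is that $I_2^{(A)}+J_2=0$ \emph{exactly}, before any limit is taken: both carry the common factor $\rho_{\delta_0}'(t-s)\varrho_\delta(x-y)\psi(t,x)$, and with $a=u^\gamma_\eps(t,x)$, $b=u_{\Dy}(s,y)$ the $k$-integrand of the sum is $\eta(a-k)\varsigma_l(b-k)-\eta(b-k)\varsigma_l(a-k)$. The substitutions $k\mapsto b-k$ and $k\mapsto a-k$ send both $\int_\R \eta(a-k)\varsigma_l(b-k)\,dk$ and $\int_\R \eta(b-k)\varsigma_l(a-k)\,dk$ to $\int_\R \eta(a-b+p)\varsigma_l(p)\,dp$, using that $\eta$ and $\varsigma$ are even; hence the $k$-integral vanishes pointwise in $(t,x,s,y)$ and $I_2^{(A)}+J_2=0$ by Fubini, the integrand being bounded and compactly supported in $k$ by \eqref{imp}.

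What survives is $I_2^{(B)}$, which carries the harmless factor $\rho_{\delta_0}(t-s)\,\partial_t\psi$. Here $\delta_0\to0$ collapses $s$ to $t$ (again by path-continuity and \eqref{imp}), $\gamma\to0$ gives $u^\gamma_\eps\to u_\eps$, $\eta\to|\cdot|$ produces the absolute value, and $l\to0$ concentrates $\varsigma_l(u_{\Dy}(t,y)-k)\,dk$ at $k=u_{\Dy}(t,y)$, yielding precisely the stated time-derivative term. I expect the only genuine obstacle to be the cancellation of the singular parts in the previous paragraph: each of $I_2^{(A)}$ and $J_2$ is individually of order $\delta_0^{-1}$ and has no limit as $\delta_0\to0$, so it is essential that they annihilate algebraically; all remaining passages are routine once \eqref{imp} and \eqref{eq:approx to abosx} are invoked.
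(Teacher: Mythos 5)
Your proposal is correct and follows essentially the same route as the proof in \cite{BaVaWit} that the paper cites (and sketches in its source): the one-sided support of $\rho$ annihilates one of the two initial terms, the singular $\rho_{\delta_0}'$ contributions in $I_2+J_2$ cancel \emph{exactly} via the change of variables in $k$ and the evenness of $\eta$ and $\varsigma_l$, and the surviving terms collapse under the stated order of limits $\delta_0\to 0$, $\gamma\to 0$, $\eta\to|\cdot|$, $l\to 0$ using path continuity, \eqref{imp} and \eqref{eq:approx to abosx}. One remark: which of the two initial terms vanishes depends on the orientation of $\supp(\rho)$ — with the paper's stated convention $\supp(\rho)\subset[-1,0]$ (so $\phi_{\delta,\delta_0}=0$ unless $s-\delta_0\le t\le s$) your identification $J_1\equiv 0$ with $I_1$ surviving is the right one, and the argument is symmetric under the mirrored convention, yielding the same limit.
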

\begin{lemma} \cite[Lemma 3.2]{BhKoleyVa1}\label{stochastic}
It holds that 
\begin{align*}
&\lim_{\eta \to |\cdot|}\lim_{\gamma \to 0}\lim_{\delta_0\to 0} (I_3+J_3)\\
& = -2E \left[  \int_{\Pi_T\times\R^d} \sum_{j \ge 1} g_j(u_{\Dy}(t,y))g_j(u_\eps(t,x))\psi(t,x) \,\rho_\delta (x-y) \rho_l(u_{\Dy}(t,y)-u_\eps(t,x))\,dy\,dx\,dt\right].\\
&\lim_{\eta \to |\cdot|}\lim_{\gamma \to 0}\lim_{\delta_0\to 0} (I_4+J_4)\\
&=\E \left[ \int_{\Pi_T\times\Pi_T} \Big[\mathbb{G}^2(u_\eps(t,x))+\mathbb{G}^2(u_\Dy(t,y))\Big]\, \psi(t,x) \rho_\delta(x-y) \, \rho_l(u_\eps(t,x)-u_{\Dy}(t,y))\,dy\,dx\,dt\,dk \right].
\end{align*}
Finally, it follows that
$$\lim_{l \to 0} \lim_{\eta \to |\cdot|}\lim_{\gamma \to 0}\lim_{\delta_0\to 0} (I_3+J_3+I_4+J_4)=0.$$ 
\end{lemma}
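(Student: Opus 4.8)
The plan is to exploit the martingale structure of the two It\^o terms $I_3,J_3$ together with the Lipschitz bound on the noise coefficients in \ref{A5}. Because $\supp\rho\subset[-1,0]$, the weight $\rho_{\delta_0}(t-s)$ restricts the $t$- and $s$-integrations to a window of width $\delta_0$ about the diagonal; on this window one of the two stochastic factors carries its free variable at the \emph{earlier} time and is therefore predictable with respect to the integrating Brownian motion. For that term I would pull the predictable factor inside the stochastic integral, so that it becomes the expectation of a genuine It\^o integral and hence vanishes. Thus only one of $I_3,J_3$ survives, and in it the accompanying weight $\varsigma_l(\cdot-k)$ sits at the \emph{later} time.

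To treat the surviving term I would first subtract the value of that weight frozen at the left endpoint of the $\delta_0$-window; this is predictable and contributes zero, so it may be inserted for free. The remaining increment of $\varsigma_l$ I would expand by It\^o's formula applied to the (viscous, resp.\ discrete) equation solved by the corresponding solution, splitting it into a drift term $\mathcal{A}$, a stochastic term $\mathcal{B}$, and a second-order correction $\mathcal{C}$. Using the It\^o isometry, Cauchy--Schwarz, and the uniform moment bounds of Section~\ref{sec:apriori&convergence}, the pieces $\mathcal{A}$ and $\mathcal{C}$ acquire a compensating factor $\sqrt{\delta_0}$ (the stochastic integral over the window contributes $\sqrt{\delta_0}$, the drift/correction integral contributes $\delta_0$) and vanish as $\delta_0\downarrow0$. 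The quadratic covariation in $\mathcal{B}$ produces, after collapsing to the diagonal $s=t$ and integrating in $k$ through $\int_\R\eta'(a-k)\varsigma_l'(b-k)\,dk\to-2\varsigma_l(a-b)$ (valid since $\varsigma_l$ is even and $\eta'\to\sgn$), exactly the asserted limit of $I_3+J_3$.

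For $I_4+J_4$, which are the genuine It\^o second-order corrections carrying $\eta''$, I would integrate by parts in $k$ using $\eta''(\cdot-k)=-\partial_k\eta'(\cdot-k)$ and again invoke $\int_\R\sgn(k-a)\varsigma_l'(k-b)\,dk=-2\varsigma_l(a-b)$; each term then converges to $\E\int\mathbb{G}^2(\cdot)\,\psi\,\rho_\delta\,\rho_l(u_\eps-u_{\Dy})$, giving the symmetric sum in the statement. Adding the two results and using the elementary identity $\mathbb{G}^2(a)+\mathbb{G}^2(b)-2\sum_j g_j(a)g_j(b)=\sum_j\big(g_j(a)-g_j(b)\big)^2$ reduces $I_3+J_3+I_4+J_4$ to $\E\int_{\Pi_T\times\R^d}\big[\sum_j(g_j(u_\eps)-g_j(u_{\Dy}))^2\big]\,\psi\,\rho_\delta\,\rho_l(u_\eps-u_{\Dy})\,dy\,dx\,dt$. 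To finish I would send $l\downarrow0$: by \ref{A5} the bracket is bounded by $K|u_\eps-u_{\Dy}|^2$, whereas $\rho_l(u_\eps-u_{\Dy})\le (C/l)\mathbf{1}_{\{|u_\eps-u_{\Dy}|\le l\}}$, so the integrand is $O(l)$ pointwise; the compact support of $\psi$ and the uniform moment bounds then justify dominated convergence and force the limit to be $0$.

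I expect the main obstacle to be the $\delta_0\downarrow0$ analysis of the surviving stochastic term: rigorously expanding $\varsigma_l$ by It\^o's formula inside the nested space--time integrals, justifying the Fubini rearrangements and the interchange of the $\delta_0$-limit with the integrations, and in particular controlling $\mathcal{A}$ and $\mathcal{C}$. The last point needs square-integrability of the viscous drift and of $\mathbb{G}^2(u_\eps)$ over the window, which in turn requires slightly more integrability of $u_\eps$ (for instance $u_\eps\in L^4$) or a mild growth condition on $\mathbb{G}$; establishing these uniform-in-$\delta_0$ bounds is the delicate step, while the algebraic cancellation into $\sum_j(g_j(u_\eps)-g_j(u_{\Dy}))^2$ and the concluding $O(l)$ estimate are routine.
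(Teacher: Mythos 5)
Your proposal is correct and follows essentially the same route as the paper's proof (given in the cited reference \cite{BhKoleyVa1} and mirrored in the authors' own computations): kill one stochastic term by predictability, subtract the $\varsigma_l$-weight frozen at the left endpoint of the $\delta_0$-window, expand the increment via It\^o's formula into drift, martingale and correction pieces $\mathcal{A},\mathcal{B},\mathcal{C}$ with $\mathcal{A},\mathcal{C}\to 0$ as $\delta_0\to 0$, extract the quadratic covariation from $\mathcal{B}$, handle $I_4+J_4$ by the $k$-integration by parts, and conclude via $\sum_j(g_j(a)-g_j(b))^2\le K|a-b|^2$ against the $O(1/l)$ concentration of $\varsigma_l$. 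Even the delicate point you flag (square-integrability of the viscous drift and of $\mathbb{G}^2(u_\eps)$, requiring slightly higher integrability of $u_\eps$ or a growth condition on $\mathbb{G}$) is precisely the issue the authors identify.
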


\noindent Next, we move on to estimate the terms coming from the associated flux function. 
\begin{lemma}
\label{flux} It holds that
\begin{align}
&\limsup_{l \rightarrow 0}\,\lim_{\eta \to |\cdot|}\,\lim_{\gamma \to 0}\lim_{\delta_0 \rightarrow 0} \, (I_5 +J_5) \\ & \qquad\le 
 \E\Big[\int_{\Pi_T}\int_{\R^d} Q \big(u_\eps(t,x),u_{\Dy}(t,y)\big)\cdot\nabla_x \psi(t,x) \varrho_{\delta}(x-y)\,dy\,dx\,dt\Big]
+ \frac{\Dy }{\delta}C \|u_0\|_{BV}\int_0^T \|\psi(t)\|_{\infty} dt.  \notag
\end{align}
\end{lemma}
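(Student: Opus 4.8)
The plan is to exploit the product structure of the test function $\phi_{\delta,\delta_0}(t,x,s,y)=\rho_{\delta_0}(t-s)\,\varrho_{\delta}(x-y)\,\psi(t,x)$ and to split the gradient appearing in $I_5$ as
\[
\grad_x\phi_{\delta,\delta_0}=\rho_{\delta_0}(t-s)\,\varrho_{\delta}(x-y)\,\grad_x\psi(t,x)+\rho_{\delta_0}(t-s)\,\psi(t,x)\,\grad_x\varrho_{\delta}(x-y),
\]
writing accordingly $I_5=I_5^{b}+I_5^{a}$, where $I_5^{b}$ carries $\grad_x\psi$ and $I_5^{a}$ carries $\grad_x\varrho_{\delta}$. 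First I would show that $I_5^{b}$ produces the announced main term, and then that the remaining pair $I_5^{a}+J_5$ contributes only the stated error.

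For $I_5^{b}$ I would pass to the limits in the prescribed order: $\delta_0\to0$ makes $\rho_{\delta_0}(t-s)$ act as a Dirac mass in time and identifies $s$ with $t$; $\gamma\to0$ removes the spatial regularisation, replacing $u^\gamma_\eps$ by $u_\eps$; $\eta\to|\cdot|$ turns $\eta'$ into $\sgn$ and hence $Q^\eta$ into the Kru\v{z}kov flux $Q$; and finally $l\to0$ collapses the $k$-integral through $\varsigma_l(u_{\Dy}-k)\to\delta_{\{k=u_{\Dy}\}}$. All four are genuine limits and yield precisely $\E[\int_{\Pi_T}\int_{\R^d}Q(u_\eps(t,x),u_{\Dy}(t,y))\cdot\grad_x\psi(t,x)\,\varrho_{\delta}(x-y)\,dy\,dx\,dt]$, the first term on the right-hand side.

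The core of the argument is the pair $I_5^{a}+J_5$. Here I would rewrite $I_5^{a}$ using $\grad_x\varrho_{\delta}(x-y)=-\grad_y\varrho_{\delta}(x-y)$, and perform a discrete summation by parts in $y$ on $J_5$: since $u_{\Dy}$ is piecewise constant on the cells $R_\beta$, the finite difference of the numerical flux $F_i$ can be transferred onto the cell differences of the test function, producing a numerical entropy flux tested against a discrete $y$-difference of $\varrho_{\delta}$. The monotonicity of $F_i$ together with its consistency $F_i(u,u)=f_i(u)$, used exactly as in \eqref{001} of Lemma~\ref{lem:cellentropyinequality}, would guarantee that this numerical entropy flux is consistent with $Q^\eta_i$ and deliver the sign that makes the outer $\limsup_{l\to0}$ and the final inequality legitimate; after sending $\delta_0,\gamma\to0$ and $\eta\to|\cdot|$ the continuous term $I_5^{a}$ and the summed-by-parts $J_5$ would cancel to leading order, leaving a consistency residual. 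That residual I would control by the Lipschitz bound on $F_i$ (each cell contributing a factor $|u_{\beta+e_i}-u_\beta|$), by the shift estimate $\int_{\R^d}|\varrho_{\delta}(x-y_{\beta+e_i})-\varrho_{\delta}(x-y_\beta)|\,dx\le \Dy\,\|\grad\varrho_{\delta}\|_{L^1(\R^d)}\le C\Dy/\delta$, and by the identity $\Dy^{\,d}\sum_{\beta,i}|u_{\beta+e_i}-u_\beta|=\Dy\,|u_{\Dy}|_{BV}$; combined with the uniform BV estimate $\E[|u_{\Dy}(t,\cdot)|_{BV}]\le C\,\E[|u_0|_{BV}]$ of Lemma~\ref{lem:bv-estimate} this produces the bound $\tfrac{\Dy}{\delta}C\|u_0\|_{BV}\int_0^T\|\psi(t)\|_{\infty}\,dt$, uniformly in $l$.

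The hard part will be precisely this pairing $I_5^{a}+J_5$, where a genuinely continuous object built from $u_\eps$ and the exact flux must be matched against a discrete object built from the grid function $u_{\Dy}$, finite differences and the numerical flux $F_i$. The two delicate choices are to arrange the summation by parts so that the numerical entropy flux that emerges is consistent with $Q$ and so that monotonicity of $F_i$ supplies the correct sign, and then to bookkeep the $\Dy/\delta$ scaling, where the factor $1/\delta$ comes from the single derivative falling on $\varrho_{\delta}$ and the factor $\Dy$ comes from one discrete gradient of $u_{\Dy}$ being absorbed into the uniform BV bound.
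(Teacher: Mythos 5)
Your proposal is correct and follows essentially the same route as the paper: the main term is extracted from the $\nabla_x\psi$ part of $I_5$, while the $\nabla_x\varrho_\delta$ part is cancelled against $J_5$ after a Crandall--Majda-type monotonicity inequality (as in \eqref{001}) supplies the sign, consistency $F_i(u,u)=f_i(u)$ plus Lipschitz continuity of $F_i$ replace the numerical flux by the Kru\v{z}kov flux $Q$, and the residuals are bounded by the shift estimate on $\varrho_\delta$ together with the uniform BV bound, giving the $C\,\Dy/\delta\,\|u_0\|_{BV}$ error. The only cosmetic difference is bookkeeping order: the paper processes $J_5$ fully (monotonicity, consistency, $l\to 0$, then conversion of the discrete difference quotient of $\rho_\delta$ into $\nabla\rho_\delta$) and quotes \cite{BaVaWit} for $I_5$ as a whole, performing the cancellation of the $\nabla\varrho_\delta$ terms only at the very end.
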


\begin{proof}
We start with 
\begin{align*}
J_5=& - \E \bigg[\int_{\R \times\Pi_T\times\Pi_T}\hspace{-1cm}  \eta^{\prime}(u_{\Dy}(s,y)-k) \frac{1}{\Dy} \sum^d_{i=1}\Big[F_i(u_{\Dy}(s,y), u_{\Dy}(s,y+\Dy e_i))-F_i(u_{\Dy}(s,y-\Dy e_i), u_{\Dy}(s,y))\Big]
 \notag
  \\[-0.4cm] &\hspace{7 cm} \times
  \phi_{\delta,\delta_0}(t,x,s,y)  \varsigma_l(u^\gamma_{\eps}(t,x)-k)\,ds\,dy\,dx\,dt\,dk\bigg]
  \\ \overset{\delta_0 \to 0}\to &
  - \E \bigg[\int_{\R \times\Pi_T\times\R^d}\hspace{-1cm}  \eta^{\prime}(u_{\Dy}(t,y)-k) \frac{1}{\Dy} \sum^d_{i=1}\Big[F_i(u_{\Dy}(t,y), u_{\Dy}(t,y+\Dy e_i))-F_i(u_{\Dy}(t,y-\Dy e_i), u_{\Dy}(t,y))\Big]
 \notag
  \\[-0.4cm] &\hspace{7 cm} \times
  \rho_\delta(x-y)\psi(t,x)  \varsigma_l(u^\gamma_{\eps}(t,x)-k)\,ds\,dy\,dx\,dt\,dk\bigg]
  \\ \overset{\gamma \to 0}\to &
  - \E \bigg[\int_{\R \times\Pi_T\times\R^d}\hspace{-1cm}  \eta^{\prime}(u_{\Dy}(t,y)-k) \frac{1}{\Dy} \sum^d_{i=1}\Big[F_i(u_{\Dy}(t,y), u_{\Dy}(t,y+\Dy e_i))-F_i(u_{\Dy}(t,y-\Dy e_i), u_{\Dy}(t,y))\Big]
 \notag
  \\[-0.4cm] &\hspace{7 cm} \times
  \rho_\delta(x-y)\psi(t,x)  \varsigma_l(u_{\eps}(t,x)-k)\,ds\,dy\,dx\,dt\,dk\bigg]
  \\ \overset{\eta \to |.|}\to &
  - \E \bigg[\int_{\R \times\Pi_T\times\R^d}\hspace{-1cm}  \sign_0(u_{\Dy}(t,y)-k) \frac{1}{\Dy} \sum^d_{i=1}\Big[F_i(u_{\Dy}(t,y), u_{\Dy}(t,y+\Dy e_i))-F_i(u_{\Dy}(t,y-\Dy e_i), u_{\Dy}(t,y))\Big]
 \notag
  \\[-0.4cm] &\hspace{7 cm} \times
  \rho_\delta(x-y)\psi(t,x)  \varsigma_l(u_{\eps}(t,x)-k)\,ds\,dy\,dx\,dt\,dk\bigg].
\end{align*}
By arguments similar to the proof of Lemma \ref{lem:cellentropyinequality} and since, for any $v,w$,  
\begin{align*}
F_i(k, k \top v)-F_i(k \top w, k) - \Big[ F_i(k, k \bot v)-F_i(k \bot w, k) \Big] \leq 0,
\end{align*}
one gets that 
\begin{align*}
&- \sign_0(u_{\Dy}(t,y)-k) \frac{1}{\Dy} \Big[F_i(u_{\Dy}(t,y), u_{\Dy}(t,y+\Dy e_i))-F_i(u_{\Dy}(t,y-\Dy e_i), u_{\Dy}(t,y))\Big]
\\ \leq &
\Big[F_i(k \bot u_{\Dy}(t,y), k \bot u_{\Dy}(t,y+\Dy e_i))-F_i(k \bot u_{\Dy}(t,y-\Dy e_i), k \bot u_{\Dy}(t,y))\Big] 
\\ & - \Big[F_i(k \top u_{\Dy}(t,y), k \top u_{\Dy}(t,y+\Dy e_i))-F_i(k \top u_{\Dy}(t,y-\Dy e_i), k \top u_{\Dy}(t,y))\Big].
\end{align*}
Thus, 
\begin{align*}
&- \int_{\R^d} \sign_0(u_{\Dy}(t,y)-k)  \Big[F_i(u_{\Dy}(t,y), u_{\Dy}(t,y+\Dy e_i))-F_i(u_{\Dy}(t,y-\Dy e_i), u_{\Dy}(t,y))\Big]  \rho_\delta(x-y)\,dy
\\ &\leq 
\int_{\R^d}  \Big[F_i(k \bot u_{\Dy}(t,y), k \bot u_{\Dy}(t,y+\Dy e_i))-F_i(k \bot u_{\Dy}(t,y-\Dy e_i), k \bot u_{\Dy}(t,y))\Big]  \rho_\delta(x-y)\,dy
\\ &
- \int_{\R^d} \Big[F_i(k \top u_{\Dy}(t,y), k \top u_{\Dy}(t,y+\Dy e_i))-F_i(k \top u_{\Dy}(t,y-\Dy e_i), k \top u_{\Dy}(t,y))\Big]  \rho_\delta(x-y)\,dy.
\\&=\int_{\R^d} 
\Big[f_i(k \bot u_{\Dy}(t,y))-f_i(k \bot u_{\Dy}(t,y-\Dy e_i))\Big]  \rho_\delta(x-y)\,dy
\\ &
- \int_{\R^d} 
\Big[f_i(k \top u_{\Dy}(t,y))-f_i(k \top u_{\Dy}(t,y-\Dy e_i))\Big]  \rho_\delta(x-y)\,dy
\\  &+
\sum_{\alpha \in \Z^d} [F_i(k \bot u_\alpha(t), k \bot u_{\alpha + e_i}(t))-f_i(k \bot u_{\alpha}(t))]  \Big[\int_{R_\alpha}\rho_\delta(x-y)\,dy-\int_{R_{\alpha+e_i}}\rho_\delta(x-y)\,dy\Big]  
\\ &
- \sum_{\alpha \in \Z^d} [F_i(k \top u_\alpha(t), k \top u_{\alpha + e_i}(t)) -f_i(k \top u_{\alpha}(t))] \Big[\int_{R_\alpha}\rho_\delta(x-y)\,dy-\int_{R_{\alpha+e_i}}\rho_\delta(x-y)\,dy\Big]  
\\ \leq &\int_{\R^d} 
\Big[f_i(k \bot u_{\Dy}(t,y))-f_i(k \bot u_{\Dy}(t,y-\Dy e_i))\Big]  \rho_\delta(x-y)\,dy
\\ &
- \int_{\R^d} 
\Big[f_i(k \top u_{\Dy}(t,y))-f_i(k \top u_{\Dy}(t,y-\Dy e_i))\Big]  \rho_\delta(x-y)\,dy
\\  &+
\sum_{\alpha \in \Z^d} C(F_i)|u_{\alpha + e_i}(t)- u_{\alpha}(t)|  \int_{R_\alpha}|\rho_\delta(x-y)-\rho_\delta(x-y-\Dy e_i)|\,dy.
\end{align*}
Since 
\begin{align*}
&\E \bigg[\int_{\R \times\Pi_T}  \sum_{\alpha \in \Z^d} |u_{\alpha + e_i}(t)- u_{\alpha}(t)|  \int_{R_\alpha}|\rho_\delta(x-y)-\rho_\delta(x-y-\Dy e_i)|\,dy
 \notag
  \\[-0.5cm] &\hspace{9 cm} \times
\frac1\Dy \psi(t,x)  \varsigma_l(u_{\eps}(t,x)-k)\,ds\,dy\,dx\,dt\,dk\bigg]
\\=
&\frac1\Dy\E \bigg[\int_{\Pi_T}  \sum_{\alpha \in \Z^d} |u_{\alpha + e_i}(t)- u_{\alpha}(t)|  \int_{R_\alpha}|\rho_\delta(x-y)-\rho_\delta(x-y-\Dy e_i)|\,dy  \psi(t,x)  \,dx\,dt\bigg]
\\ \leq
&\frac1\Dy\bigg[\int_0^T \|\psi(t)\|_{\infty} \sum_{\alpha \in \Z^d} \E|u_{\alpha + e_i}(t)- u_{\alpha}(t)|  \int_{\R^d} \int_{R_\alpha}|\rho_\delta(x-y)-\rho_\delta(x-y-\Dy e_i)|\,dy\,dx\,dt\bigg]
\\ \leq
&\frac1\Dy\bigg[\int_0^T \|\psi(t)\|_{\infty} \sum_{\alpha \in \Z^d} \E|u_{\alpha + e_i}(t)- u_{\alpha}(t)|  C\frac{\Dy}{\delta}\Dy^d dt\bigg] \leq C \|u_0\|_{BV}\int_0^T \|\psi(t)\|_{\infty} dt \frac{\Dy}{\delta},
\end{align*}
we are left to 
\begin{align*}
&\lim_{\delta_0,\gamma,\eta}J_5 \leq C \|u_0\|_{BV}\int_0^T \|\psi(t)\|_{\infty} dt \frac{\Dy}{\delta}
\\
& + \frac{1}{\Dy}\E \bigg[\int\limits_{\R \times\Pi_T}\hspace{-0.25cm}   \sum^d_{i=1}\Big[\int_{\R^d}  \Big[f_i(k \bot u_{\Dy}(t,y))-f_i(k \bot u_{\Dy}(t,y-\Dy e_i))\Big]  \rho_\delta(x-y)\,dy\Big]
 \notag
  \\[-0.5cm] &\hspace{9 cm} \times
  \psi(t,x)  \varsigma_l(u_{\eps}(t,x)-k)\,dx\,dt\,dk\bigg]
\\
& -\frac{1}{\Dy}\E \bigg[\int\limits_{\R \times\Pi_T}\hspace{-0.25cm}   \sum^d_{i=1}\Big[\int_{\R^d}  \Big[f_i(k \top u_{\Dy}(t,y))-f_i(k \top u_{\Dy}(t,y-\Dy e_i))\Big]  \rho_\delta(x-y)\,dy\Big]
 \notag
  \\[-0.5cm] &\hspace{9 cm} \times
  \psi(t,x)  \varsigma_l(u_{\eps}(t,x)-k)\,dx\,dt\,dk\bigg]\\
  &=\frac{C  \Dy \|u_0\|_{BV}}{\delta}\int_0^T \|\psi(t)\|_{\infty} dt
  \\ & - \frac{1}{\Dy}\E \bigg[\int\limits_{\R \times\Pi_T}\hspace{-0.25cm}   \sum^d_{i=1}\Big[\int_{\R^d}  
  \Big[\{Q(u_{\Dy}(t,y),k)\}_i - \{Q(u_{\Dy}(t,y-\Dy e_i),k)\}_i\Big]  \rho_\delta(x-y)\,dy\Big]
   \notag
    \\[-0.5cm] &\hspace{9 cm} \times
    \psi(t,x)  \varsigma_l(u_{\eps}(t,x)-k)\,dx\,dt\,dk\bigg].
\end{align*}
Here $\{Q(u,v)\}_i$ denotes the $i$-th component of $Q(u,v)$. Next, passing to the limit over $l$ yields 
\begin{align*}
&\limsup_l\lim_{\delta_0,\gamma,\eta}J_5 \leq \frac{\Dy C \|u_0\|_{BV}}{\delta}\int_0^T \|\psi(t)\|_{\infty} dt
\\
& - \frac{1}{\Dy}\E \bigg[\int\limits_{\Pi_T}  \sum^d_{i=1}\int_{\R^d}  
\{Q(u_{\Dy}(t,y),u_{\eps}(t,x))\}_i - \{Q(u_{\Dy}(t,y-\Dy e_i),u_{\eps}(t,x))\}_i  \rho_\delta(x-y)\,dy  \psi(t,x) \,dx\,dt\bigg].
\end{align*}
Note that,
\begin{align*}
&-\frac1\Dy \int_{\R^d}  \{Q(u_{\Dy}(t,y),u_{\eps}(t,x))\}_i - \{Q(u_{\Dy}(t,y-\Dy e_i),u_{\eps}(t,x))\}_i  \rho_\delta(x-y)\,dy
\\ =&
-\frac1\Dy \int_{\R^d}  \{Q(u_{\Dy}(t,y),u_{\eps}(t,x))\}_i \Big[ \rho_\delta(x-y)-\rho_\delta(x-y-\Dy e_i)\Big]\,dy
\\ =&
-\int_{\R^d}  \{Q(u_{\Dy}(t,y),u_{\eps}(t,x))\}_i \Big[ \frac{\rho_\delta(x-y)-\rho_\delta(x-y-\Dy e_i)}{\Dy}-\partial_i\rho_\delta(x-y)+\partial_i\rho_\delta(x-y)\Big]\,dy.
\end{align*}
Then, 
\begin{align*}
&\Big|\int_{\R^d}  \{Q(u_{\Dy}(t,y),u_{\eps}(t,x))\}_i \Big[ \frac{\rho_\delta(x-y)-\rho_\delta(x-y-\Dy e_i)}{\Dy}-\partial_i\rho_\delta(x-y)\Big]\,dy\Big|
\\ = &
\Big|\int_{\R^d}  \{Q(u_{\Dy}(t,y),u_{\eps}(t,x))\}_i  \int_{-\Dy}^0\Big[\partial_i\rho_\delta(x-y+\tau e_i)-\partial_i\rho_\delta(x-y)\Big]d\tau\,dy\Big|
\\ = &
\Big|\int_{\R^d} \int_{-\Dy}^0\Big[\{Q(u_{\Dy}(t,y+\tau e_i),u_{\eps}(t,x))\}_i -Q(u_{\Dy}(t,y),u_{\eps}(t,x))\}_i\Big]\partial_i\rho_\delta(x-y)d\tau\,dy\Big|
\\ \leq &
\|f\|_{Lip} \int_{\R^d} \int_{-\Dy}^0\Big|u_{\Dy}(t,y+\tau e_i) - u_{\Dy}(t,y)\Big| |\partial_i\rho_\delta(x-y)| d\tau\,dy.
\end{align*}
So that
\begin{align*}
&\frac{\|f\|_{Lip}}{\Dy}\E \bigg[\int\limits_{\Pi_T}   \sum^d_{i=1}\int_{\R^d}  
\int_{-\Dy}^0\Big|u_{\Dy}(t,y+\tau e_i) - u_{\Dy}(t,y)\Big| |\partial_i\rho_\delta(x-y)| d\tau\,dy  \psi(t,x)  \,dx\,dt\bigg]
\\ \leq &
\frac{\|f\|_{Lip}}{\Dy} \sum^d_{i=1}\E \bigg[\int_0^T \|\psi(t)\|_\infty   \int_{\R^d}  
\int_{-\Dy}^0\Big|u_{\Dy}(t,y+\tau e_i) - u_{\Dy}(t,y)\Big| \,d\tau\int\limits_{\R^d} |\partial_i\rho_\delta(x-y)| \,dx \,dy    \,dt\bigg]
\\ \leq &
\frac{\|f\|_{Lip}}{\delta\Dy} \sum^d_{i=1}\E \bigg[\int_0^T \|\psi(t)\|_\infty  \int_{-\Dy}^0 \int_{\R^d}  
\Big|u_{\Dy}(t,y+\tau e_i) - u_{\Dy}(t,y)\Big|\,dy \,d\tau    \,dt\bigg]
\\ \leq &
\frac{\|f\|_{Lip}}{\delta\Dy} \sum^d_{i=1} \bigg[\int_0^T \|\psi(t)\|_\infty  \int_{-\Dy}^0 \Dy^d \sum_\alpha  \E
\Big|u_{\alpha-e_i}(t) - u_{\alpha}(t)\Big|\,dy \,d\tau    \,dt\bigg]
\\ \leq &
\frac{\|f\|_{Lip}\Dy}{\delta} \|u_0\|_{BV}\int_0^T \|\psi(t)\|_\infty\,dt.
\end{align*}
In conclusion, 
\begin{align*}
&\limsup_l\lim_{\delta_0,\gamma,\eta}J_5 \\
&\leq \frac{\Dy }{\delta}C \|u_0\|_{BV}\int_0^T \|\psi(t)\|_{\infty} dt
- \frac{1}{\Dy}\E \bigg[\int_{\Pi_T\times \R^d}  
Q(u_{\Dy}(t,y),u_{\eps}(t,x)).\nabla \rho_\delta(x-y)  \psi(t,x) \,dy\,dx\,dt\bigg].
\end{align*}

\noindent For the other term, we simply follow \cite{BaVaWit} to conclude,
\begin{align*}
&\limsup_l\lim_{\delta_0,\gamma,\eta} I_5 = \E\bigg[\int_{\Pi_T}\int_{\R^d}  Q(u_\eps(t,x),u_{\Dy}(t,y))\cdot\nabla_x (\varrho_{\delta}(x-y)\psi(t,x) )\,dy \,dx\,dt\bigg],
\end{align*}
and the lemma is proved.
\end{proof}

\begin{lemma}\cite{BaVaWit}\label{viscous} It holds that
\begin{align*}
\lim_{\eps \to 0}\limsup_{l \to 0}\limsup_{\eta \to |\cdot|}\limsup_{\delta_0\to 0} |I_8| = 0 . 
\end{align*}
\end{lemma}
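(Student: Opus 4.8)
The plan is to exploit the explicit factor $\eps$ multiplying $I_8$ and to absorb the only dangerous quantity, the gradient $\nabla_x u^\gamma_\eps$, by means of the parabolic energy estimate for the viscous approximation. Recall that
\begin{align*}
I_8 = -\eps\,\E\left[\int_{\R\times\Pi_T\times\Pi_T}\eta'(u^\gamma_\eps(t,x)-k)\,\nabla_x u^\gamma_\eps(t,x)\cdot\nabla_x\phi_{\delta,\delta_0}(t,x,s,y)\,\varsigma_l(u_{\Dy}(s,y)-k)\,dx\,dt\,dy\,ds\,dk\right],
\end{align*}
where $\delta$ and $r$ remain fixed throughout the whole hierarchy of limits. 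Since $|\eta'|\le 1$ holds uniformly along the approximation $\eta\to|\cdot|$, I would first take absolute values inside and use the two exact identities $\int_\R\varsigma_l(u_{\Dy}(s,y)-k)\,dk=1$ and $\int_0^T\rho_{\delta_0}(t-s)\,ds=1$ to integrate out the variables $k$ and $s$. Writing $\nabla_x\phi_{\delta,\delta_0}=\rho_{\delta_0}(t-s)\,\nabla_x\bigl(\varrho_\delta(x-y)\psi(t,x)\bigr)$ and using the elementary bound $\int_{\R^d}|\nabla_x(\varrho_\delta(x-y)\psi(t,x))|\,dy\le C\bigl(\delta^{-1}\|\psi\|_\infty+\|\nabla\psi\|_\infty\bigr)$, this leaves
\begin{align*}
|I_8|\le \eps\,C(\delta,\psi)\,\E\left[\int_{[0,T]\times K}|\nabla_x u^\gamma_\eps(t,x)|\,dx\,dt\right],
\end{align*}
where $K$ is a fixed compact set containing the $x$-support of $\psi$.

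Next I would apply the Cauchy--Schwarz inequality on the bounded domain $[0,T]\times K$, and then pass from the mollified field back to $u_\eps$ through Young's convolution inequality $\|\nabla u^\gamma_\eps\|_{L^2}\le\|\nabla u_\eps\|_{L^2}$ (valid since $\|\rho_\gamma\|_{L^1}=1$), to arrive at
\begin{align*}
|I_8|\le \eps\,C(\delta,\psi)\,\bigl(T\,|K|\bigr)^{1/2}\left(\E\int_0^T\|\nabla_x u_\eps(t,\cdot)\|_{L^2(\R^d)}^2\,dt\right)^{1/2}.
\end{align*}
The decisive ingredient is the uniform parabolic a priori bound $\eps\,\E\int_0^T\|\nabla_x u_\eps\|_{L^2(\R^d)}^2\,dt\le C$, independent of $\eps$, which follows from applying It\^o's formula to $\|u_\eps\|_{L^2}^2$ in the viscous equation and invoking the growth control in Assumption \ref{A5}. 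Feeding this in yields $|I_8|\le C(\delta,\psi)\,\eps^{1/2}$.

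The crucial feature of this estimate is that $C(\delta,\psi)$ depends only on the fixed parameters $\delta$ and $\psi$ and is \emph{independent} of $\delta_0$, $l$, $\gamma$, and of the approximation $\eta\to|\cdot|$. Consequently every inner $\limsup$ in the statement is majorised by the same quantity $C(\delta,\psi)\,\eps^{1/2}$, and letting $\eps\to0$ closes the argument. The only genuine obstacle is the one already flagged: the gradient $\nabla_x u_\eps$ is merely of order $\eps^{-1/2}$ in $L^2$, so the bare prefactor $\eps$ does not suffice on its own; it is precisely the combination with the energy estimate that manufactures the decaying factor $\eps^{1/2}$. An equivalent route, should one prefer to avoid the energy bound, is to integrate by parts in $x$, rewriting $\eta'(u^\gamma_\eps-k)\nabla_x u^\gamma_\eps=\nabla_x\eta(u^\gamma_\eps-k)$ and transferring the derivative onto $\phi_{\delta,\delta_0}$; this trades the gradient for a factor $\Delta_x\varrho_\delta=O(\delta^{-2})$ and gives the alternative bound $|I_8|\le C(\psi)\,\eps\,\delta^{-2}$, which again vanishes as $\eps\to0$ with $\delta$ held fixed.
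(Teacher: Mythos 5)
Your argument is correct and is essentially the paper's own proof (the paper delegates this lemma to \cite{BaVaWit}, where the viscous term is handled exactly this way): bound $|\eta'|\le 1$, integrate out $k$ and $s$ via $\int_\R\varsigma_l\,dk=1$ and $\int\rho_{\delta_0}\,ds=1$, and combine the explicit prefactor $\eps$ with the parabolic energy estimate $\eps\,\E\int_0^T\|\nabla_x u_\eps\|_{L^2}^2\,dt\le C$ to obtain a bound of order $\sqrt{\eps}$, uniform in $\delta_0$, $\gamma$, $\eta$ and $l$. Your observation that the bare factor $\eps$ alone is insufficient, and your alternative integration-by-parts route giving $C(\psi)\,\eps\,\delta^{-2}$, are both sound as well.
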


We now estimate the terms coming from the fractional operator in the next two lemmas. We choose $r=\Dy$ for the subsequent calculations. 
\begin{lemma}\label{fractional_upper} It holds that
\begin{align*}
\limsup_{l \to 0}\limsup_{\eta \to |\cdot|}\limsup_{\delta_0\to 0} (I_6+J_6) 
& \leq 
\E \bigg[\int\limits_{\R^d\times\Pi_T}\hspace{-0.25cm}  |A(u_\Dy(t,y))-A(u_\eps(t,x))| \mathfrak{L}_\lambda^r[ \psi(t,\cdot)](x)\rho_\delta(x-y) \,dy\,dx\,dt\bigg]\\ &\hspace{3cm}+ \frac{C_\lambda}{\delta}\|A'\|_\infty\|\psi\|_\infty \|u_0\|_{BV} 
\begin{cases}
\Dy,  & \text{if} \,\,  \lambda<1/2,  \\
\Dy |\ln \Dy|,  & \text{if}  \,\, \lambda = 1/2, \\
\Dy^{2(1-\lambda)}, & \text{if}   \,\,\lambda >1/2.
\end{cases}
\end{align*}
\end{lemma}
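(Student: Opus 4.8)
The plan is to take the iterated limits in the prescribed order, reduce $I_6+J_6$ to a single signed expression, invoke the monotonicity of $A$ through the Kato-type inequality \eqref{imp_10}, and finally transfer the nonlocal operator onto $\psi$ by exploiting the translation invariance of $\varrho_\delta(x-y)$. First I would let $\delta_0\to 0$: since $\rho_{\delta_0}(t-s)$ is an approximation of the identity in time, this collapses $s$ onto $t$ and replaces $\phi_{\delta,\delta_0}$ by $\psi(t,x)\varrho_\delta(x-y)$ and $\overline\phi^y_{\delta,\delta_0}$ by its $y$-cell average; then $\gamma\to 0$ (as in the companion lemmas) restores $u_\eps$ and $\eta\to|\cdot|$ turns $\eta'$ into $\sgn_0$. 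Both $I_6$ and $J_6$ still carry the Kru\v{z}kov variable $k$, and I would carry out the $k$-integration via $\int_{\R}\sgn_0(a-k)\,\varsigma_l(b-k)\,dk\to\sgn_0(a-b)$ as $l\to 0$. This fuses the two terms into the common factor $\sgn_0(u_\eps(t,x)-u_{\Dy}(t,y))$ multiplying $\mathfrak{L}_\lambda^r[A(u_{\Dy})(t,\cdot)](y)-\mathfrak{L}_\lambda^r[A(u_\eps)(t,\cdot)](x)$, integrated against $\psi(t,x)\varrho_\delta(x-y)$, up to the discrepancy between $\overline\phi^y_{\delta,\delta_0}$ and $\psi\varrho_\delta$.

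Since $A$ is nondecreasing by \eqref{A3}, I would then apply the Kato-type inequality \eqref{imp_10}, restricted to the far field $|z|>r$, with $u=u_\eps(t,x)$ and $v=u_{\Dy}(t,y)$:
\begin{align*}
\sgn\big(u_\eps(t,x)-u_{\Dy}(t,y)\big)\Big(\mathfrak{L}_\lambda^r[A(u_{\Dy})(t,\cdot)](y)-\mathfrak{L}_\lambda^r[A(u_\eps)(t,\cdot)](x)\Big)\le \mathfrak{L}_\lambda^r\big[\,|A(u_\eps(t,\cdot))-A(u_{\Dy}(t,\cdot))|\,\big](x,y),
\end{align*}
the right-hand side being the two-variable (diagonal) nonlocal operator acting on the nonnegative function $\Phi(x,y):=|A(u_\eps(t,x))-A(u_{\Dy}(t,y))|$. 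This bounds the signed combination from above by a nonlocal operator applied to $\Phi$, and it is the point where the degeneracy of $A$ is absorbed.

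Next I would transfer this operator onto the test function. By the self-adjointness of $\mathfrak{L}_\lambda^r$ together with the decisive observation that $\varrho_\delta(x-y)$ is invariant under the diagonal shift $(x,y)\mapsto(x+z,y+z)$, i.e. $\varrho_\delta((x+z)-(y+z))=\varrho_\delta(x-y)$, the shift acts on $\psi(t,x)$ alone; hence pairing the previous inequality with $\psi(t,x)\varrho_\delta(x-y)$ and integrating in $(x,y)$ produces
\begin{align*}
\E\bigg[\int_{\R^d\times\Pi_T} |A(u_\eps(t,x))-A(u_{\Dy}(t,y))|\;\mathfrak{L}_\lambda^r[\psi(t,\cdot)](x)\,\varrho_\delta(x-y)\,dy\,dx\,dt\bigg],
\end{align*}
which is exactly the first term on the right-hand side of the claim.

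Finally I would estimate the remainder, which has two sources: the replacement of $\overline\phi^y_{\delta,\delta_0}$ by $\psi\,\varrho_\delta$, and the piecewise-constant character of $u_{\Dy}$ entering $\mathfrak{L}_\lambda^r$ with the choice $r=\Dy$. Following the deterministic analysis of \cite{Cifani}, I would control both by the spatial BV bound of Lemma~\ref{lem:bv-estimate} in the form $|A(u_{\Dy}(t,\cdot))|_{BV}\le\|A'\|_\infty\,|u_{\Dy}(t,\cdot)|_{BV}\le C\|A'\|_\infty\|u_0\|_{BV}$, the cost $\norm{\nabla\varrho_\delta}_{L^1}\lesssim\delta^{-1}$ of differentiating the mollifier, and the truncation of the singular kernel at scale $r=\Dy$. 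The governing kernel integral is $\Dy\int_{\Dy<|z|<1}|z|^{1-d-2\lambda}\,dz$ (the tail $|z|>1$ being harmless), and since $\int_{\Dy<|z|<1}|z|^{1-d-2\lambda}\,dz$ behaves like $O(1)$, $|\ln\Dy|$, $\Dy^{1-2\lambda}$ for $\lambda<\tfrac12$, $\lambda=\tfrac12$, $\lambda>\tfrac12$ respectively, this yields the factor $\tfrac{C_\lambda}{\delta}\|A'\|_\infty\|\psi\|_\infty\|u_0\|_{BV}$ times $\Dy$, $\Dy|\ln\Dy|$, $\Dy^{2(1-\lambda)}$. I expect the main obstacle to be this last step: extracting the sharp three-regime rate from the discretization of the singular kernel near $|z|=r=\Dy$, and justifying the interchange of the nonlocal integral with the expectation and the stochastic terms during the limit passage (integrability and Fubini, exactly as in \eqref{tutu}), while keeping $\delta$, $r$ and $\Dy$ consistently coupled.
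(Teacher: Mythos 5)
Your proposal is correct and follows essentially the same route as the paper's proof: the same splitting of $I_6+J_6$ into a main part (with the smooth test function $\psi\varrho_\delta$) plus the cell-average discrepancy $\overline \phi^y_{\delta,\delta_0}-\phi_{\delta,\delta_0}$, the same Kato-type inequality transferred onto $\psi$ through self-adjointness and the diagonal translation invariance of $\varrho_\delta(x-y)$, and the same BV-plus-kernel estimate $\frac{\Dy}{\delta}\|A'\|_\infty\|\psi\|_\infty\|u_0\|_{BV}\int_{\Dy<|z|\le 1}|z|\,d\mu_\lambda(z)$ producing the three regimes. The only cosmetic difference is in bookkeeping: the paper (via \cite[Lemmas 3.4, 4.6]{BhKoleyVa1}) applies the Kato argument while the smoothed sign $\sgn(u_{\Dy}-u_\eps+k)\varsigma_l(k)$ is still present, collecting an explicit error $C(\lambda)\|A'\|_\infty l/r^{2\lambda}$ that is killed when $l\to 0$, whereas you pass to the limit $l\to 0$ first (legitimate by dominated convergence, since $l$ is the last limit taken in either case) and then invoke the sharp-sign inequality \eqref{imp_10}.
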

\begin{proof}
We have
\begin{align*}
I_6+J_6=& - \E \left[\int_{\R \times\Pi_T\times\Pi_T} \mathfrak{L}_{\lambda}^{\Dy}[A(u_\Dy (s,\cdot))](y)\, \overline \phi^y_{\delta,\delta_0}(t,x,s,y)\, \eta'(u_{\Dy}(s,y) -k) \,dy\,ds 
\, \varsigma_l(u_\eps(t,x)-k)\,dx\,dt\,dk\right]\\
&\quad - \E \left[\int_{\R \times \Pi_T\times\Pi_T} \mathfrak{L}_{\lambda}^r[A(u_\eps(t,\cdot))](x)\, \phi_{\delta,\delta_0}(t,x,s,y)\, \eta'(u_\eps(t,x) -k) \,dx\,dt 
\, \varsigma_l(u_{\Dy}(s,y)-k)\,dy\,ds\,dk \right]\\
=&-\E \left[\int_{\R \times\Pi_T\times\Pi_T} \mathfrak{L}_{\lambda}^r[A(u_{\Dy}(s,\cdot))](y)\, \phi_{\delta,\delta_0}(t,x,s,y)\, \eta'(u_{\Dy}(s,y) -k) \,dy\,ds 
\, \varsigma_l(u_{\eps}(t,x)-k)\,dx\,dt\,dk \right]\\
&\quad - \E \left[\int_{\R\times \Pi_T\times\Pi_T} \mathfrak{L}_{\lambda}^r[A(u_\eps(t,\cdot))](x)\, \phi_{\delta,\delta_0}(t,x,s,y)\, \eta'(u_\eps(t,x) -k) \,dx\,dt 
\, \varsigma_l(u_{\Dy}(s,y)-k)\,dy\,ds\,dk \right]\\
&\quad -\E \bigg[\int_{\R \times \Pi_T\times\Pi_T} \mathfrak{L}_{\lambda}^{r}[A(u_{\Dy}(s,\cdot))](y)\, \Big(\overline \phi^y_{\delta,\delta_0}(t,x,s,y)-\phi_{\delta,\delta_0}(t,x,s,y)\Big)\, \eta'(u_{\Dy}(s,y) -k) \,dy\,ds\\
&\hspace{7cm}\times \varsigma_l(u_\eps(t,x)-k)\,dx\,dt\,dk\bigg]\\
&:=A+B+C.
\end{align*}


\noindent Following \cite[Lemma 3.4, Lemma 4.6]{BhKoleyVa1} we have,
\begin{align*} 
&\lim_{\eta \to |\cdot|} \lim_{\delta_0 \to 0}( A+B)\\
&=- \E \left[\int_{\R \times \R^d \times\Pi_T} \mathfrak{L}_{\lambda}^{r}[A(u_{\Dy}(t,\cdot))](y)\,\psi(t,x) \rho_\delta(x-y) \sign(u_{\Dy}(t,y) -k)
\, \varsigma_l(u_\eps(t,x)-k)\,dx\,dt\,dy\,dk\right]\\
&\quad - \E \left[\int_{\R \times \R^d\times\Pi_T} \mathfrak{L}_{\lambda}^r[A(u_\eps(t,\cdot))](x)\, \psi(t,x) \rho_\delta(x-y)\, \sign(u_\eps(t,x) -k) 
\, \varsigma_l(u_{\Dy}(t,y)-k)\,dx\,dt\,dx\,dk \right]\\
&=- \E \bigg[\int_{\R \times \R^d\times\Pi_T} \left(\mathfrak{L}_{\lambda}^{r}[A(u_{\Dy}(s,\cdot))](y)- \mathfrak{L}_{\lambda}^r[A(u_\eps(t,\cdot))](x)\right)\sgn(u_{\Dy}(s,y)-u_\eps(t,x)+k)\\
&\hspace{6cm}\times\psi(t,x)\rho_{\delta_0}(t-s) \rho_\delta(x-y) \varsigma_l(k)\,dx\,dt\,dy\bigg]\\
&\le -\E \bigg[\int_{\R^d\times\Pi_T} | A(u_\Dy(t,y))-A(u_\eps(t,x))| \mathfrak{L}^r_\lambda[ \psi(t,\cdot)](x) \rho_{\delta}(x-y) \,dx\,dt\,dy\bigg]+ C(\lambda)\frac{\|A'\|_\infty l}{r^{2\lambda}}.
\end{align*}
Thus we conclude that 
\begin{align*}
\limsup_{ l \to 0} \lim_{\eta \to |\cdot|} \lim_{\delta_0 \to 0}
(A+B) \le -\E \bigg[\int_{\R^d\times\Pi_T} | A(u_\Dy(t,y))-A(u_\eps(t,x))| \mathfrak{L}^r_\lambda[ \psi(t,\cdot)](x) \rho_{\delta}(x-y) \,dx\,dt\,dy\bigg].
\end{align*}
We now consider,
\begin{align*}
C=&-\E \bigg[\int_{\R \times \Pi_T\times\Pi_T} \mathfrak{L}_{\lambda}^{r}[A(u_{\Dy}(s,\cdot))](y)\, \Big(\overline \phi^y_{\delta,\delta_0}(t,x,s,y)-\phi_{\delta,\delta_0}(t,x,s,y)\Big)\, \eta'(u_{\Dy}(s,y) -k) \,dy\,ds\\
&\hspace{6cm}\times \varsigma_l(u_\eps(t,x)-k)\,dx\,dt\,dk\bigg]\\
 &= \E \bigg[\int_{\Pi_T\times\Pi_T\times\R} \int_{|z|>r} [A(u_{\Dy}(s,y))-A(u_{\Dy}(s,y+z))]d\mu_\lambda(z)\, \psi(t,x)\rho_{\delta_0}(t-s)\big(\overline \rho_\delta^y(x,y)-\rho_{\delta}(x-y)\big)\,  \\
 &\hspace{6cm} \times\eta'(u_{\Dy}(s,y) -k)\varsigma_l(u_\eps(t,x)-k)\,dk\,dy\,ds\,dx\,dt\bigg].
\end{align*}
Since $|\eta'| \leq 1$, 
\begin{align*}
|C|
\leq & \E \Big[\int\limits_{\Pi_T\times\Pi_T} \int_{|z|>r} \big|A(u_\Dy(s,y))-A(u_\Dy(s,y+z))\big|d\mu_\lambda(z)\, \psi(t,x)\rho_{\delta_0}(t-s)\big|\overline \rho_\delta^y(x,y)-\rho_{\delta}(x-y)\big|\,  \,dy\,ds \,dx\,dt\Big]
\\
\leq &\|A'\|_\infty\|\psi\|_\infty \E \Big[\int\limits_{\Pi_T} \int_{|z|>r} \big|u_{\Dy}(s,y)-u_\Dy(s,y+z)\big|d\mu_\lambda(z)\, \int\limits_{\R^d}\big|\overline \rho_\delta^y(x,y)-\rho_{\delta}(x-y)\big|\, dx \,dy\,ds \Big]
\\ \leq & C\frac{\Dy}{\delta}\|A'\|_\infty\|\psi\|_\infty\left[\|u_0\|_{BV} \int_{1 \geq |z|>r} |z| d\mu_{\lambda}(z) +\|u_0\|_{L^1} \int_{|z|>1} d\mu_{\lambda}(z) \right],
\end{align*}
where to derive the last inequality we have used the fact that $\int_{\R^d}| \overline{\rho}_\eps(y)-\rho_\eps(y)|\,dy \le \frac{\Dy}{\eps}$ for any space mollifier $\rho_\eps$. Now we conclude the proof of the lemma with the aid of the following:
\begin{align*}
\int_{1 \geq |z|>\Dy} |z| d\mu_{\lambda}(z) \leq 
\begin{cases}
C_\lambda,  & \text{if}\,\,   \lambda<1/2,  \\
C_\lambda |\ln \Dy|,  & \text{if}\,\,   \lambda = 1/2, \\
C_\lambda  \Dy^{1-2\lambda}, & \text{if}\,\,   \lambda >1/2 .
\end{cases}
\end{align*}
\end{proof}

\begin{lemma}\label{fractional_lower} It holds that
\begin{align*}
&\limsup_{l \to 0}\limsup_{\eta \to |\cdot|}\limsup_{\delta_0\to 0}(I_7+J_7)\le \\
&\qquad \|u_0\|_{BV} \|A'\|_\infty\int^T_0 \bigg( \| \nabla\psi(t,\cdot)\|_\infty+ \frac{C}{\delta}\| \psi(t,\cdot)\|_\infty\bigg)\int_{|z|\le r} |z|^2 \,d\mu_\lambda(z)\,dt + C \frac{r^{2-2\lambda}}{\delta}.
\end{align*}
\end{lemma}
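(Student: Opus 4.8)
The plan is to treat $I_7$ (the near fractional part carried by the viscous solution $u_\eps$) and $J_7$ (the near part carried by the scheme $u_{\Dy}$) separately: in each case I first pass to the limits $\delta_0\to 0$, $\eta\to|\cdot|$, $l\to 0$ in that order, and then extract the quadratic factor $\int_{|z|\le r}|z|^2\,d\mu_\lambda(z)$ by symmetrising the truncated operator $\mathfrak{L}_{\lambda,r}$. Since neither $I_7$ nor $J_7$ contains a stochastic integral, the limit $\delta_0\to0$ merely collapses $\rho_{\delta_0}(t-s)$ onto the diagonal $s=t$ by mean time-continuity and Lebesgue's theorem; $\eta\to|\cdot|$ replaces $\eta'$ by $\sgn_0$ and $A^\eta_k$ by $|A(\cdot)-A(k)|$ (using $|\eta'|\le1$); and $l\to0$ integrates out $k$ against $\varsigma_l$, localising $k=u_{\Dy}(t,y)$ in $I_7$ and $k=u_\eps(t,x)$ in $J_7$. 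Writing $\overline\varrho^{\,y}_\delta(x,y)=\sum_{\alpha}\tfrac1{\Dy^d}\int_{R_\alpha}\varrho_\delta(x-z)\,dz\,\mathds{1}_{R_\alpha}(y)$ for the cell-averaged projection of $\varrho_\delta(x-\cdot)$, one is left with
\begin{align*}
\lim_{l\to0}\lim_{\eta\to|\cdot|}\lim_{\delta_0\to0} I_7
&=-\E\Big[\int_{\Pi_T\times\R^d}\mathfrak{L}_{\lambda,r}[A(u_\eps(t,\cdot))](x)\,\sgn_0\big(u_\eps(t,x)-u_{\Dy}(t,y)\big)\,\varrho_\delta(x-y)\,\psi(t,x)\,dy\,dx\,dt\Big],\\
\lim_{l\to0}\lim_{\eta\to|\cdot|}\lim_{\delta_0\to0} J_7
&=-\E\Big[\int_{\Pi_T\times\R^d}\big|A(u_{\Dy}(t,y))-A(u_\eps(t,x))\big|\,\psi(t,x)\,\mathfrak{L}_{\lambda,r}^{\Dy/2}[\overline\varrho^{\,y}_\delta(x,\cdot)](y)\,dy\,dx\,dt\Big].
\end{align*}

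For $I_7$ I symmetrise the defining $z$-integral under $z\mapsto-z$, so that (up to the kernel $|z|^{-d-2\lambda}$) the integrand becomes $\tfrac12\big(A(u_\eps(x))-A(u_\eps(x+z))\big)\big[\sgn_0(u_\eps(x)-c)\varrho_\delta(x-y)\psi(x)-\sgn_0(u_\eps(x+z)-c)\varrho_\delta(x+z-y)\psi(x+z)\big]$ with $c=u_{\Dy}(t,y)$. Splitting the bracket into a sign-difference part and a test-function-difference part, the monotonicity of $A$ gives $(A(u_\eps(x))-A(u_\eps(x+z)))(\sgn_0(u_\eps(x)-c)-\sgn_0(u_\eps(x+z)-c))\ge0$; multiplied by $\varrho_\delta\psi\ge0$ this part is sign-definite and, because of the overall minus sign, may be discarded in the upper bound (exactly the mechanism of Lemma~\ref{lem:cellentropyinequality}). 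On the remaining part I use $|A(u_\eps(x))-A(u_\eps(x+z))|\le\|A'\|_\infty|u_\eps(x)-u_\eps(x+z)|$ and the Lipschitz estimate for $\varrho_\delta(x-y)\psi(x)-\varrho_\delta(x+z-y)\psi(x+z)$, which carries a factor $|z|$ times $|\nabla\varrho_\delta(x-y)|\psi(x)+\varrho_\delta(x-y)|\nabla\psi(x)|$. Integrating in $y$ (turning $\int|\nabla\varrho_\delta|\,dy$ into $C/\delta$ and $\int\varrho_\delta\,dy$ into $1$) and then in $x$ via the translation bound $\int_{\R^d}|u_\eps(x)-u_\eps(x+z)|\,dx\le|z|\,|u_\eps(t,\cdot)|_{BV}$ produces a second power of $|z|$; using $\E|u_\eps(t,\cdot)|_{BV}\le C\|u_0\|_{BV}$ (the uniform $BV$ bound for the viscous solution, cf.\ \cite{BhKoleyVa1} and Lemma~\ref{lem:bv-estimate}) this yields precisely $\|u_0\|_{BV}\|A'\|_\infty\int_0^T(\|\nabla\psi(t,\cdot)\|_\infty+\tfrac C\delta\|\psi(t,\cdot)\|_\infty)\int_{|z|\le r}|z|^2\,d\mu_\lambda(z)\,dt$.

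For $J_7$ the operator already sits on the smooth weight, so I symmetrise in $(y,z)$ to write the $y$-integral as $\tfrac{d_\lambda}{2}\int\!\!\int_{\Dy/2<|z|\le r}|z|^{-d-2\lambda}(w_x(y)-w_x(y+z))(\overline\varrho^{\,y}_\delta(x,y)-\overline\varrho^{\,y}_\delta(x,y+z))\,dz\,dy$ with $w_x(y):=|A(u_{\Dy}(t,y))-A(u_\eps(t,x))|$. The reverse triangle inequality gives the $x$-free bound $|w_x(y)-w_x(y+z)|\le|A(u_{\Dy}(t,y))-A(u_{\Dy}(t,y+z))|$; integrating first in $x$ (with $\psi\le\|\psi\|_\infty$) via $\int_{\R^d}|\overline\varrho^{\,y}_\delta(x,y)-\overline\varrho^{\,y}_\delta(x,y+z)|\,dx\le\tfrac C\delta(|z|+\Dy)$, and then in $y$ via $\int_{\R^d}|A(u_{\Dy}(t,y))-A(u_{\Dy}(t,y+z))|\,dy\le|z|\,\|A'\|_\infty|u_{\Dy}(t,\cdot)|_{BV}$, gives $\tfrac C\delta\|\psi\|_\infty\|A'\|_\infty\|u_0\|_{BV}\big(\int_{|z|\le r}|z|^2\,d\mu_\lambda+\Dy\!\int_{\Dy/2<|z|\le r}|z|\,d\mu_\lambda\big)$. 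The lower cut-off $\Dy/2$ keeps the last integral finite for every $\lambda<1$, and with the choice $r=\Dy$ it is of order $\Dy\cdot r^{1-2\lambda}=r^{2-2\lambda}$, producing the residual $C\,r^{2-2\lambda}/\delta$. Adding the two contributions (the overlapping $\tfrac C\delta\|\psi\|_\infty$ terms being absorbed into the generic constant) gives the claimed bound.

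I expect the main obstacle to be the term $I_7$. Because $A(u_\eps)$ is only $BV$ uniformly in $\eps$ and $\Dy$, a naive $L^1$ bound on $\mathfrak{L}_{\lambda,r}[A(u_\eps)]$ diverges once $\lambda\ge\tfrac12$, so the quadratic gain $\int_{|z|\le r}|z|^2\,d\mu_\lambda$ cannot be reached by brute force. It must come from the symmetrisation together with the monotonicity of $A$ and the Kru\v{z}kov sign structure, which allow one to discard the sign-definite diagonal part and transfer one power of $|z|$ onto the gradient of the test function; carrying out this cancellation while keeping every constant independent of $\eps$ and $\Dy$ is the delicate point.
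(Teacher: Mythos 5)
Your proposal is correct and reaches the stated bound, but it takes a genuinely different route from the paper's proof. For $I_7$ the paper invokes the Kato-type inequality of \cite[Lemma 3.5]{BhKoleyVa1} to transfer $\mathfrak{L}_{\lambda,r}$ onto the smooth weight $\psi(t,\cdot)\,\varrho_\delta(\cdot-y)$, then Taylor-expands that weight to second order and integrates by parts once, paying with $\nabla A(u_\eps)\in L^1$; you instead re-derive the Kato mechanism by hand: symmetrise the bilinear form, discard the sign-definite monotone part, and recover the two powers of $|z|$ from the Lipschitz bound on $\varrho_\delta\psi$ (one power) and the BV translation estimate $\int_{\R^d}|u_\eps(x)-u_\eps(x+z)|\,dx\le |z|\,|u_\eps(t,\cdot)|_{BV}$ (the other). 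These are two phrasings of the same cancellation --- your finite-difference bound replaces the paper's integration by parts --- so the substance is equivalent, but your version is self-contained rather than resting on the cited lemma. For $J_7$ your route is a genuine simplification: the paper regularises the piecewise-constant averaged weight $(\overline{\rho}^{\,y}_\delta)_\theta$, Taylor-expands, integrates by parts against the gradient \emph{measure} of $A^\eta_k(u_{\Dy}(s,\cdot))$ (following \cite{Cifani}), and finally removes $\theta$; you avoid the auxiliary parameter $\theta$ and the measure-theoretic integration by parts altogether by symmetrising in $y$ and combining the reverse triangle inequality with the two $L^1$ translation estimates. The price is the extra term $\frac{C\Dy}{\delta}\int_{\Dy/2<|z|\le r}|z|\,d\mu_\lambda(z)$, which you absorb for $r=\Dy$ (the choice actually made in the paper); it is in fact $\le C r^{2-2\lambda}/\delta$ for every $r>\Dy/2$ since $\Dy<2r$, so the lemma in its stated generality also follows.

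One caveat on ordering. You pass the limits $\delta_0,\eta,l$ \emph{first} and only then symmetrise; but for $\lambda\ge\tfrac12$ the quantity $\mathfrak{L}_{\lambda,r}[A(u_\eps(t,\cdot))](x)$ appearing in your displayed limit of $I_7$ is not classically defined when $u_\eps$ is only $H^1\cap BV$ --- precisely the divergence you flag in your last paragraph. The repair is purely presentational: run the symmetrisation and splitting at fixed $\gamma$, i.e.\ on $A(u_\eps)^\gamma$ as it appears in \eqref{stochas_entropy_1-levy-d}, where every step is classical. Note that the discard step does not even require $\eta\to|\cdot|$, since $\big(A(a)-A(b)\big)\big(\eta'(a-k)-\eta'(b-k)\big)\ge 0$ already holds for smooth convex $\eta$ ($A$ and $\eta'$ both non-decreasing); your bound on the remaining part is uniform in $\gamma,\eta,l,\delta_0$, so all the hierarchical limits then pass unchanged and your argument is complete.
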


\begin{proof}
Following \cite[Lemma 3.5]{BhKoleyVa1} we have,
\begin{align*}
&\limsup_{\delta \to 0} \lim_{\gamma \to 0} \lim_{n \to \infty} I_7\\
&\qquad \le -E \left[\int_{\Pi_T \times \R^d \times \R}|A(u_\eps(t,x))-A(k)| \mathfrak{L}_{\lambda,r}[\psi(t,\cdot)\rho_\delta(\cdot -y)](x) \varsigma_l(u_\Dy(t,y)-k)\,dk\,dy\,dx\,dt\right] \\
&=\E\bigg[ \int_{\Pi_T\times\R^d\times\R} |A(u_\eps(t,x)) -A(k)| \,\int_{|z|\leq r}\int^1_0(\tau -1) z^T.D^2(\psi(t,x)\rho_\delta(x-y+\tau z)).z \,d\tau\,d\mu_\lambda(z)\\[-0.1cm]
&\hspace{8cm}\times\, \varsigma_l(u_\Dy(s,y)-k)\,dk\,dx \,dt\,dy\,ds \bigg]\\
&\le \E\bigg[ \int_{\Pi_T\times\R^d\times\R}\int_{|z|\leq r}\int^1_0 | \nabla A(u_\eps(t,x))| | \nabla( \psi(t,x) \rho_\delta(x-y+\tau z))|\, |z|^2\,d\tau\,d\mu_\lambda(z)\\[-0.2cm]
&\hspace{8cm}\times\,  \varsigma_l(u_\Dy(s,y)-k)\,dk\,dx \,dt\,dy\,ds \bigg]\\
&\le \E\bigg[ \int_{\Pi_T}\int_{|z|\leq r} | \nabla A(u_\eps(t,x))||\nabla \psi(t,x)| |z|^2\,d\mu_\lambda(z)\,dx \,dt \bigg]\\
&\quad+\E\bigg[ \int_{\Pi_T\times\R^d}\int_{|z|\leq r}\int^1_0 |\nabla A(u_\eps(t,x))| \psi(t,x)| \nabla \rho_\delta(x-y+\tau z)||z|^2\,d\tau\,d\mu_\lambda(z)\,dx \,dt\,dy \bigg]\\
&\le \|u_0\|_{BV}\|A'\|_\infty \int^T_0 \bigg( \| \nabla\psi(t,\cdot)\|_\infty+ \frac{C}{\delta}\| \psi(t,\cdot)\|_\infty\bigg)\int_{|z|\le r} |z|^2 \,d\mu_\lambda(z)\,dt.
\end{align*}
To handle the other term we consider,
\begin{align*}
I_7= &-  \E \bigg[\int_{\Pi_T\times\Pi_T\times\R} A^\eta_k(u_\Dy(s,y)) \,\mathfrak{L}_{\lambda,r}^{\frac{\Dy}{2}} [\overline \phi^y_{\delta,\delta_0}(t,x,s,\cdot)](y)
\, \varsigma_l(u_\eps(t,x)-k)\,dy\,ds\,dx \,dt\,dk \bigg]\\
=&-  \E \bigg[\int_{\Pi_T\times\Pi_T\times\R} A_k^\eta(u_\Dy(s,y)) \int_{\frac{\Dy}{2}<|z|\leq r} \Big(\overline \rho^y_{\delta}(x,y)-\overline \rho^y_{\delta}(x,y+z)\Big)d\mu_\lambda(z)\\
&\hspace{6cm}\times\psi(t,x)\rho_{\delta_0}(t-s) \varsigma_l(u_\eps(t,x)-k)\,dk\,dy\,ds\,dx \,dt \bigg].
\end{align*}

Following \cite{Cifani}, we denote in a first step
 \begin{align*}
I^\theta_7= &-  \E \bigg[\int\limits_{\Pi_T\times\Pi_T\times\R} A_k^\eta(u_\Dy(s,y)) \int_{\frac{\Dy}{2}<|z|\leq r} \Big((\overline \rho^y_{\delta})_\theta(x,y)-(\overline \rho^y_{\delta})_\theta(x,y+z)\Big)d\mu_\lambda(z)\\
&\hspace{6cm}\times \, \psi(t,x)\rho_{\delta_0}(t-s)\varsigma_l(u_\eps(t,x)-k) \,dk\,dy\,ds\,dx \,dt \bigg],
\end{align*}
where $(\overline \rho^y_{\delta})_\theta$ denotes a regularization of $\overline \rho^y_{\delta}$ in the $y$-variable by a mollification of parameter $\theta$. Then,
\begin{align*}
|I^\theta_7|= &  \Bigg|\E \bigg[\int_{\Pi_T\times\Pi_T\times\R} A_k^\eta(u_\Dy(s,y)) \int_{\frac{\Dy}{2}<|z|\leq r}\int^1_0 (1-\tau)z^T.D^2(\overline \rho^y_{\delta})_\theta(x,y+\tau z).z\,d\tau\,d\mu_\lambda(z)\\
&\hspace{6cm}\times \, \psi(t,x)\rho_{\delta_0}(t-s)\varsigma_l(u_\eps(t,x)-k) \,dk\,dy\,ds\,dx \,dt \bigg]\Bigg|\\
&= \Bigg|\E \bigg[\int_{\Pi_T\times\Pi_T\times\R} \int_{\frac{\Dy}{2}<|z|\leq r}\int^1_0 (1-\tau)\nabla(\overline \rho^y_{\delta})_\theta(x,y+\tau z).z\,d(\nabla A_k^\eta(u_\Dy(s,\cdot)))(y).z\,d\tau\,d\mu_\lambda(z)\\
&\hspace{6cm}\times \, \psi(t,x)\rho_{\delta_0}(t-s)\varsigma_l(u_\eps(t,x)-k) \,dk\,dy\,ds\,dx \,dt \bigg]\Bigg|\\
&\le \|A'\|_\infty \E \bigg[\int_{\Pi_T\times\Pi_T} \int_{\frac{\Dy}{2}<|z|\leq r}\int^1_0 |\nabla(\overline \rho^y_{\delta})_\theta(x,y+\tau z)|\,d(|\nabla u_\Dy(s,\cdot)|)(y)|z|^2\,d\tau\,d\mu_\lambda(z)\\
&\hspace{6cm}\times \, \psi(t,x)\rho_{\delta_0}(t-s)\,dy\,ds\,dx \,dt \bigg]\\
&\le \frac{C\|A'\|_\infty{\|u_0\|_{BV}\|\psi\|_\infty}}{\delta}\int_{\frac{\Dy}{2}<|z|\le r} |z|^2\,d\mu_\lambda(z)\le \frac{C}{\delta} \int_{|z| \le r} |z|^2\,d\mu_\lambda(z) = C\frac{r^{2-2\lambda}}{\delta},
\end{align*}
where to derive the penultimate inequality we follow \cite[Proof of Thm 7.1]{Cifani} and use the fact that $\|( \overline{\rho}_\delta^y)_\theta\|_{BV} \le \|( \overline{\rho}_\delta^y)\|_{BV} \le \frac{C}{\delta}$. Finally to get the same estimate for $I_7$, we use that $\lim_{\theta \to 0}I^\theta_7= I_7$.
\end{proof}

\noindent Thus making using of Lemmas \ref{initial_terms}- \ref{fractional_lower}, and recalling that $r=\Dy$, we pass to the limit in $\eps$  to get 
\begin{align}
0\le &\E \left[ \int_{\R^d}\int_{\R^d} \big|u_{\Dy}(0,y)-u_0(x)\big|
 \,\varrho_{\delta}(x-y)\, \psi(0,x) \,dy \,dx\right] \label{collect}\\
 &+ \E \left[\int_{\Pi_T}\int_{\R^d} \big|u(t,x)-u_\Dy(t,y)\big| \,\varrho_{\delta}(x-y)\, \partial_t \psi(t,x) \,dy\, dx\,dt \right]\nonumber\\
 &+\E\left[\int_{\Pi_T}\int_{\R^d} Q \big(u(t,x),u_{\Dy}(t,y)\big)\cdot \nabla \psi(t,x)\varrho_{\delta}(x-y)\,dy\,dx\,dt\right]
+ C\frac{\Dy}{\delta}\nonumber\\
&+\E \left[\int_{\Pi_T}\int_{\R^d}  |A(u_\Dy(t,y))-A(u(t,x))| \mathfrak{L}_\lambda^{\Dy}[ \psi(t,\cdot)](x)\rho_\delta(x-y) \,dy\,dx\,dt\right] \nonumber\\
&+ \frac{C_\lambda}{\delta}\|\psi\|_\infty \|u_0\|_{BV} 
\begin{cases}
\Dy,  & \text{if} \,\,  \lambda<1/2  \\
\Dy |\ln \Dy|,  & \text{if}  \,\, \lambda = 1/2 \\
\Dy^{2(1-\lambda)}, & \text{if}   \,\,\lambda >1/2
\end{cases}\nonumber\\
&+\|u_0\|_{BV} \int^T_0 \bigg( \| \nabla\psi(t,\cdot)\|_\infty+ \frac{C}{\delta}\| \psi(t,\cdot)\|_\infty\bigg)\int_{|z|\le \Dy} |z|^2 \,d\mu_\lambda(z)\,dt+C\frac{(\Delta y)^{2-2\lambda}}{\delta}. \nonumber
\end{align}

To proceed further, we make a special choice for the function $\psi(t,x)$. To this end, for each $h>0$ and fixed $t\ge 0$, we define
 \begin{align}
 \psi_h^t(s)=\begin{cases} 1, &\quad \text{if}~ s\le t, \notag \\
 1-\frac{s-t}{h}, &\quad \text{if}~~t\le s\le t+h,\quad \psi_R(x)=\text{min}\Big(1,\frac{R^a}{|x|^a}\Big)\notag \\
 0, & \quad \text{if} ~ s \ge t+h.
 \end{cases}
 \end{align}
Furthermore, let $\rho$ be any non-negative mollifier.  Clearly, \eqref{collect} holds with $\psi(s,x)=\psi_h^t(s) \, (\psi_R \star \rho)(x)$. At this point, we can closely follow Bhauryal et. al. \cite{BhKoleyVa, BhKoleyVa1} and pass to the limits as $R \to \infty$ and $h \to 0$ to conclude 
\begin{align*}
\E \bigg[\int_{\R^d} \big|u(t,y)-u_\Dy(t,y)\big|  \,dx \bigg] \le& \E \Big[ \int_{\R^d} \big|u_{\Dy}(0,y)-u_0(y)\big|
 \,dx\Big]\\[-0.4cm]
 & +C\left(\delta+\frac{\Dy}{\delta}+\frac{(\Delta y)^{2-2\lambda}}{\delta}\right) + \frac{C}{\delta}  
\begin{cases}
\Dy,  & \text{if} \,\,  \lambda<1/2,  \\
\Dy |\ln \Dy|,  & \text{if}  \,\, \lambda = 1/2, \\
\Dy^{2(1-\lambda)}, & \text{if}   \,\,\lambda >1/2.
\end{cases}
\end{align*}
For $\lambda \in (0,1/2]$, we choose $\delta =\sqrt{\Dy}$ and for $\lambda \in (1/2,1)$, we choose $\delta = ( \Dy)^{1-\lambda}$ to obtain the following
\begin{align*}
\E\bigg[\int_{\R^d} \big|u(t,y)-u_\Dy(t,y)\big|  \,dy \bigg] \le C 
\begin{cases}
\sqrt{\Dy},  & \text{if} \,\,  \lambda<1/2,  \\
\sqrt{\Dy} |\ln \Dy|,  & \text{if}  \,\, \lambda = 1/2, \\
\Dy^{(1-\lambda)}, & \text{if}   \,\,\lambda >1/2,
\end{cases}
\end{align*}
provided the initial error satisfies $\E\Big[ \int_{\R^d} \big|u_{\Dy}(0,y)-u_0(y)\big|
 \,dy\Big] \le \sqrt{\Dy}$.

\section{Numerical Experiments}
\label{numerics}

In this section, we simulate numerical experiments to substantiate the results we have shown in the previous sections. In what follows, inspired by Del Teso \textit{et al.} \cite{DelTeso18,DelTeso19} for the deterministic fractional porous medium operator, we test numerically the performance of the proposed scheme \eqref{scheme}. Here we use a Godunov scheme for the first-order operator and an explicit scheme for the noise term. We set the underlying target equation \eqref{eq:stoc_con_brown} (posed in $\R$) with 
\begin{align*}
A(u)=(u-\frac12)^+,\quad f(u) = \frac12 u^2,\quad \sigma(u)=u(1-u), \,\,\text{and} \,\, u_0(x)=2e^{\frac1{x^2-1}}\mathds{1}_{(-1,1)}. 
\end{align*}
Let us mention  that this configuration of data leads to a solution $u$ satisfying $0 \leq u \leq 1$ so that one may replace $f(u)$ and $\sigma(u)$ by $\frac12 [\min(1,|u|)]^2$ and $u^+(1-u)^+$ respectively to be compatible with the assumptions of the paper.

Next, we present the methodology of the fully discrete explicit numerical scheme. $\Dt>0$ and $\Dx>0$ are the time-step and the spatial mesh size respectively. $t^n =n \Dt$ for $n=0,1,2,\cdots, N= \frac{T}{\Dt}$ denotes the temporal grid and $x_i=i \Dx$ for $i \in \Z$ the spatial one. The scheme becomes
\begin{align}\label{fully_discrete}
U^{n+1}_i= U^n_i- \Dt D^- F(U^n_i,U^n_{i+1})- \frac{\Dt}{\Dx} \sum_{j \in \Z} \widetilde G_{j-i}A(U^n_{j}) + \sigma(U^n_i)\Big[W(t_{(n+1)\Dt})-W(t_{n\Dt})\Big],\\
U^0_i = \frac{1}{\Dx} \int^{x_{i+\frac12}}_{x_{i-\frac12}} u_0(x)\,dx,
\end{align}
where $U^n_i$ is the approximate solution of \eqref{eq:stoc_con_brown} in the cell $[t^n, t^{n+1}) \times [x_{i-\frac12},x_{i+\frac12})$ and we denote the weights by
$\widetilde G_{i-j} := G_{i,j} = G_{j,i}:= \widetilde G_{j-i}$ for $i\neq j$, and $\widetilde G_{0} := G_{i,i}$. The numerical solution is the piecewise constant function denoted by
$$u_\Dx(t,x) = U^n_i, \quad \text{ for all } (t,x) \in [t_n,t_{n+1}) \times [x_{i-\frac{1}{2}},x_{i+\frac12}).$$ A tedious but straightforward calculation reveals the explicit values of the weights:
%
%
\[
\begin{array}{|c||c|c|}
\hline \displaystyle
\frac{\widetilde{G}_i}{\Dx}  
& \displaystyle
\lambda \neq \frac12  
& \displaystyle
 \lambda = \frac12 \\[0.3cm]
\hline
\hline \displaystyle
|i| \geq 2  
& \displaystyle  
-\frac{d_\lambda(\Dx)^{-2\lambda}}{2\lambda(1-2\lambda)}\Big[
-(i-1)^{1-2\lambda}+2(i)^{1-2\lambda}
-(i+1)^{1-2\lambda} \Big]
& \displaystyle
\frac{-d_\lambda}{\Dx} \ln \Big[\frac{i^2}{i^2-1} \Big]  \\[0.3cm]
\hline \displaystyle
|i|=1  
& \displaystyle
 \frac{d_\lambda \Dx^{-2\lambda}}{2\lambda(1-2\lambda)}\Big[2^{1-2\lambda} +\frac{2\lambda}{2^{1-2\lambda}}-2\Big] 
=
 \frac{d_\lambda \Dx^{-2\lambda}}{2\lambda(1-2\lambda)}\Big[2^{1-2\lambda} +\lambda 2^{2\lambda}-2\Big] 
& \displaystyle  
\frac{-d_\lambda}{\Dx}
\\[0.3cm] 
\hline
i=0 
& \displaystyle  
\frac{d_\lambda(\Dx)^{-2\lambda}}{\lambda(1-2\lambda)}
\bigg(1 -2\lambda (\frac{1}{2})^{1-2\lambda} \bigg)
=
\frac{d_\lambda(\Dx)^{-2\lambda}}{\lambda(1-2\lambda)}
\bigg(1 -\lambda 2^{2\lambda} \bigg)
& \displaystyle  
\frac{d_\lambda}{\Dx}\bigg( 2 + 2\ln2 \bigg)
\\[0.3cm] 
\hline
\end{array}
\]
Here $d_\lambda=\frac{2^{2\lambda} \Gamma(\frac{1+2\lambda}{2})}{\pi^{1/2} \Gamma(1-\lambda)}$, with $\Gamma$ being the classical Gamma function.


\subsection{Computation of the non-local diffusion term:} 
Following \cite{Jerome}, a truncated domain $[-K\Dx,K\Dx]$ is considered for a given $K$,  and one considers that $U_i =U_{-K}$ for all $ i \leq -K$ and $U_i= U_K$ for all $i \geq K$. Therefore,  the non-local term in \eqref{fully_discrete} becomes,
\begin{align*}
\frac{\Dt}{\Dx} \sum_{j \in \Z} \widetilde G_{j-i}A(U^n_{j})=
&\frac{\Dt}{\Dx} \sum_{|j| < K} \widetilde G_{j-i}A(U^n_{j}) + \frac{\Dt}{\Dx} A(U^n_{-K}) \sum_{j \leq -K-i} \widetilde G_{j} + \frac{\Dt}{\Dx} A(U^n_{K}) \sum_{j \geq K-i} \widetilde G_{j}
\\ =&
\frac{\Dt}{\Dx} \Big[\sum_{|j| < K} \widetilde G_{j-i} A(U^n_{j}) - \frac12 A(U^n_{-K}) \sum_{|j| < K+i} \widetilde G_j- \frac12 A(U^n_{K})\sum_{|j| < K-i} \widetilde G_j\Big]
\end{align*}
since for any $B>0$,  $\sum_{j \leq -B} \widetilde G_j = \sum_{j \geq B} \widetilde G_j = \frac12 \sum_{|j| \geq B} \widetilde G_j = - \frac12 \sum_{|j| < B} \widetilde G_j$.

\subsection{Numerical Examples:} 
We have chosen to work with the truncated spatial domain $[-3,3]$ and the time of simulation is $T=1$. We considered as a very small time step $\overline {\Delta t}=2^{-12}$, five times steps $\Delta t \in \{2^{-9},2^{-8},2^{-7},2^{-6}, 2^{-5}\}$ and the corresponding space steps: $\overline{\Delta x}$ and five $\Delta x$, given by using a CFL condition. The later is based on the classical one for monotone flux: $\|f'\|_\infty \frac{\Delta t}{\Delta x}\sim 1$ \cite{Jerome} if $\lambda \leq 0.5$ and on the power $\lambda$ and the weights: $\widetilde{G}_0 \frac{\Delta t}{\Delta x}\sim 1$ \cite{Cifani,Huang} else. 

Since one doesn't know about explicit solutions for such problems, one proposes as a numerical rate of convergence: 
\begin{align*}
\text{Error} := \max_{t \in \{0.25,\ 0.5,\ 0.75,\ 1\}} \widetilde{\E}\Big[\|u_{\Delta x}(t)-u_{\overline{\Delta x}}(t)\|_{L^1(-3,3)}\Big],
\end{align*}
where $\widetilde{\E}$ denotes the statistical average over $5000$ independent paths. 
\\[0.1cm]
This ``Error" is then calculated on the 1088 processing cores research computing cluster ``Pyrene" (univ. Pau) for five values of $\lambda$ in $\{0.8,0.65, 0.5, 0.3,0.1\}$ and the corresponding results are given in the figures below. One can note that the numerical rate of convergence seems to be of order $1$ for smaller values of $\lambda$, and it is of order $1/2$ for larger values of $\lambda$. 

\begin{minipage}{19cm}
\begin{minipage}{5cm}
\includegraphics[scale=0.15]{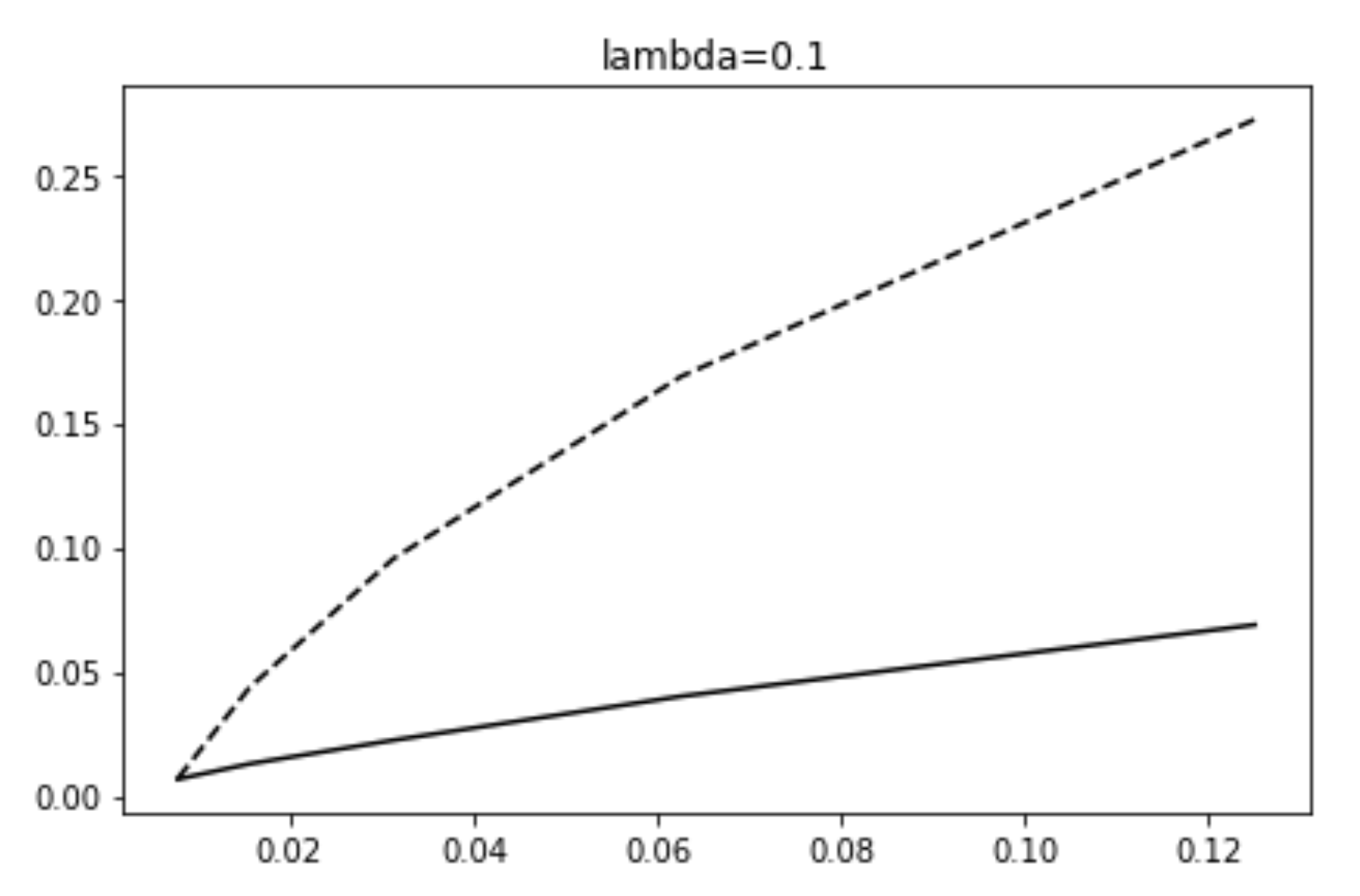}
\end{minipage}
\begin{minipage}{5cm}
\includegraphics[scale=0.15]{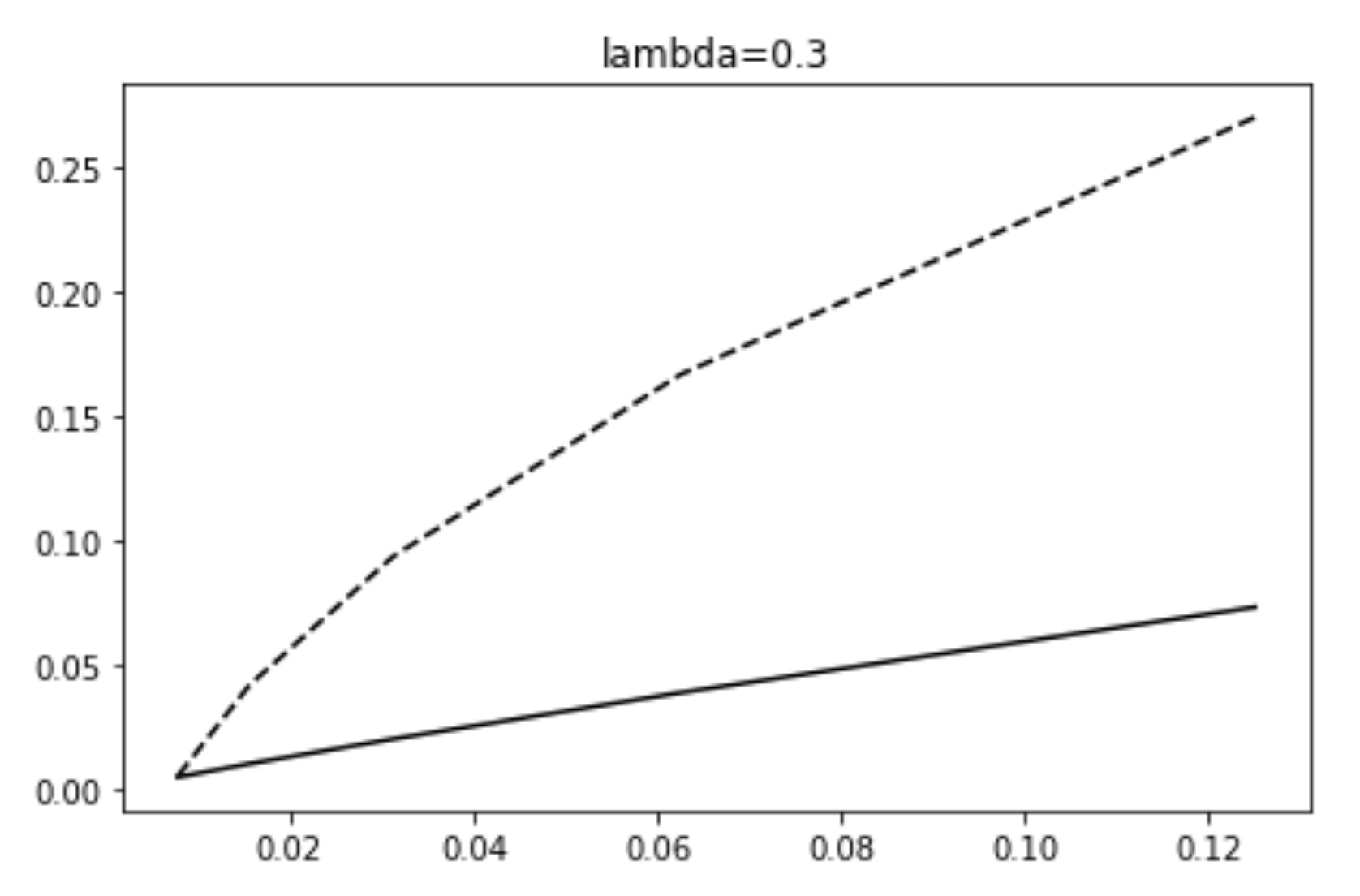}
\end{minipage}
\begin{minipage}{5cm}
\includegraphics[scale=0.15]{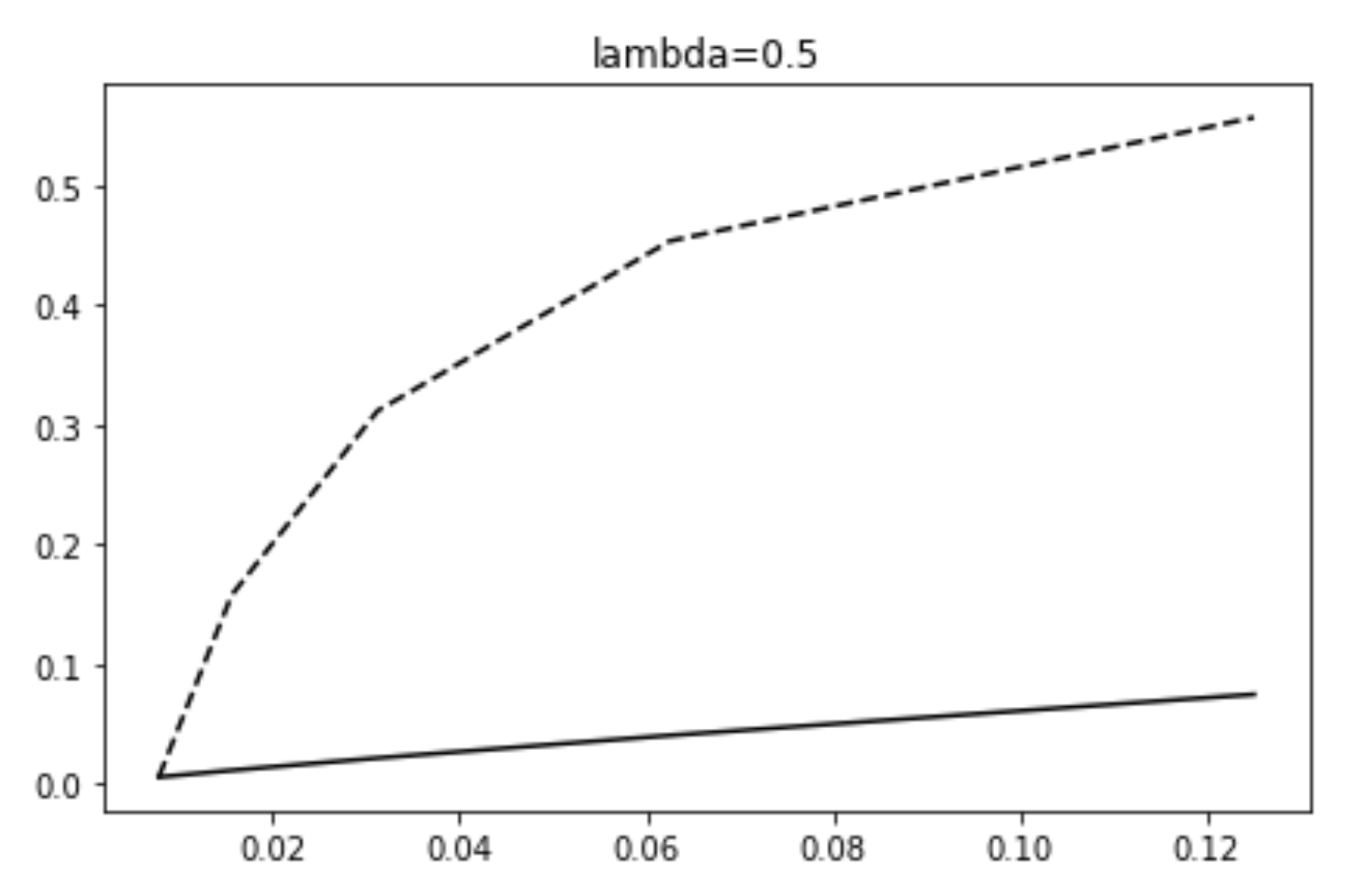}
\end{minipage}

\begin{minipage}{5cm}
\includegraphics[scale=0.15]{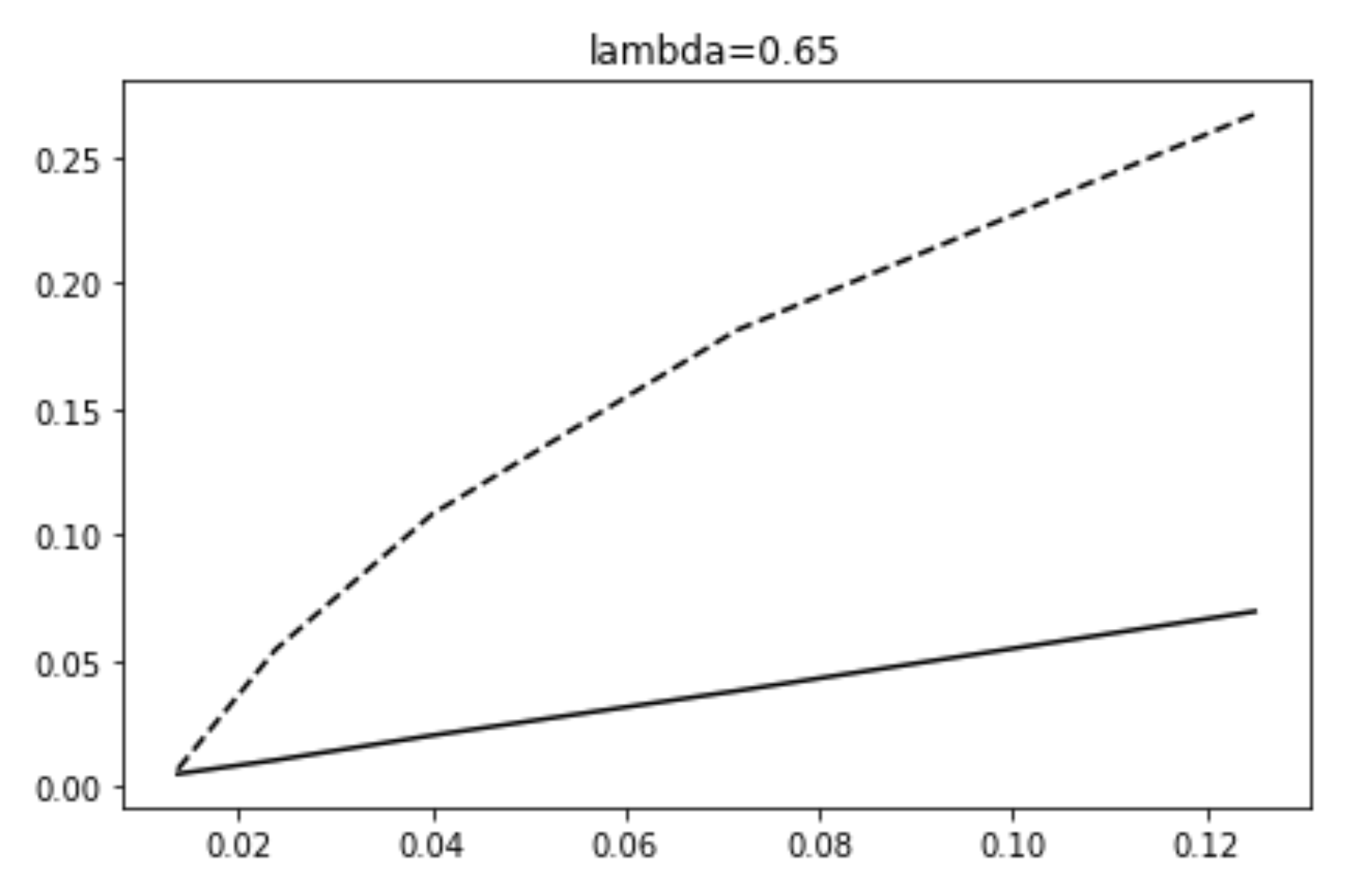}
\end{minipage}
\begin{minipage}{5cm}
\includegraphics[scale=0.15]{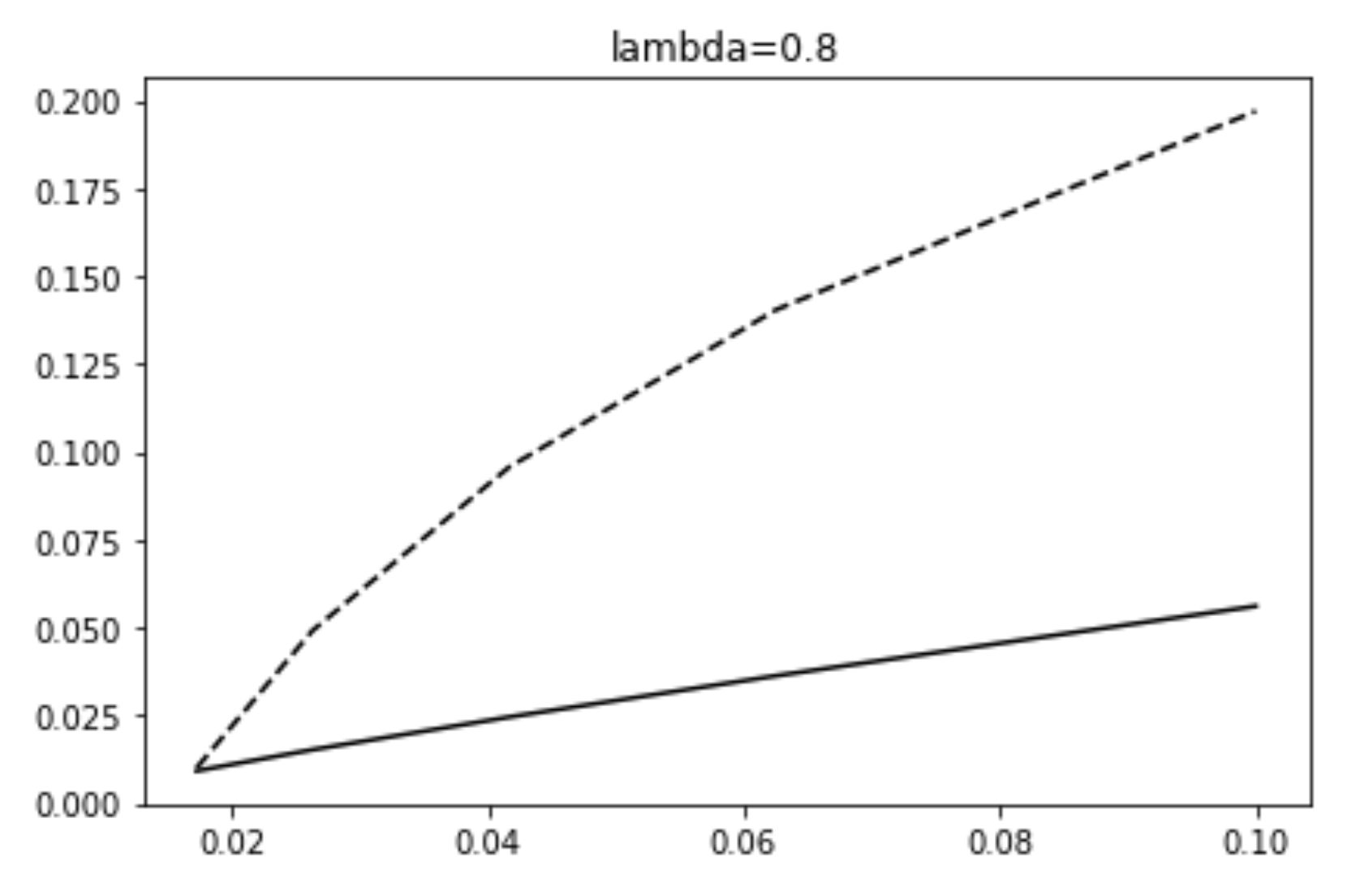}
\end{minipage}
\begin{minipage}{5cm}
Fig. 1: doted curve : curve shape for each $\lambda$ given by Theorem \ref{Main_Theorem} (with $C=1$),
\\
plain curve : curve shape for each $\lambda$ given by "Error".
\end{minipage}
\end{minipage}
\vspace{0.25cm}

{\footnotesize 
\hspace{-0.5cm}
\begin{minipage}[c]{0.34\textwidth}
	\centering
\begin{tabular}{c  c  c c } 
	\toprule
	\bfseries $\Dx$  &\bfseries $\text{Error}$  & \bfseries $\text{Rate}$  \\ 
	\midrule
	\midrule
	0.78 E-2 & 0.52 E-2 & - \\
	\midrule
	1.56 E-2 & 1.07 E-2  & 1.02 \\
	\midrule
	3.12 E-2 & 2.05 E-2 & 0.93 \\
	\midrule
	6.25 E-2 & 3.81 E-2 & 0.89 \\
	\midrule
	12.5 E-2 & 6.97 E-2  & 0.88 \\
	\bottomrule
\end{tabular}
\vspace{0.1cm}
\captionof{table}{$\lambda=0.1$}
\end{minipage}
\hspace{-0.5cm}
\begin{minipage}[c]{0.34\textwidth}
	\centering
	\begin{tabular}{c  c  c c } 
		\toprule
		\bfseries $\Dx$  &\bfseries $\text{Error}$  & \bfseries $\text{Rate}$  \\ 
		\midrule
		\midrule
		0.78 E-2 & 0.53 E-2 & - \\
		\midrule
		1.56 E-2 & 1.09 E-2  & 1.03 \\
		\midrule
		3.12 E-2 & 2.12 E-2 & 0.95 \\
		\midrule
		6.25 E-2 & 4.01 E-2 & 0.92 \\
		\midrule
		12.5 E-2 & 7.44 E-2  & 0.88 \\
		\bottomrule
	\end{tabular}
\vspace{0.1cm}
	\captionof{table}{$\lambda=0.3$}
\end{minipage}
\hspace{-0.5cm}
\begin{minipage}[c]{0.34\textwidth}
	\centering
	\begin{tabular}{c  c  c c } 
		\toprule
		\bfseries $\Dx$  &\bfseries $\text{Error}$  & \bfseries $\text{Rate}$  \\ 
		\midrule
		\midrule
		0.78 E-2 & 0.50 E-2 & - \\
		\midrule
		1.56 E-2 & 1.04 E-2  & 1.05 \\
		\midrule
		3.12 E-2 & 2.04 E-2 & 0.96 \\
		\midrule
		6.25 E-2 & 3.90 E-2 & 0.99 \\
		\midrule
		12.5 E-2 & 7.34 E-2  & 0.90 \\
		\bottomrule
	\end{tabular}
\vspace{0.1cm}
	\captionof{table}{$\lambda=0.5$}
\end{minipage}

\hspace{-0.5cm}
\begin{minipage}[c]{0.35\textwidth}
	\centering
	\begin{tabular}{c  c  c c } 
		\toprule
		\bfseries $\Dx$  &\bfseries $\text{Error}$  & \bfseries $\text{Rate}$  \\ 
		\midrule
		\midrule
		0.78 E-2 & 0.68 E-2 & - \\
		\midrule
		1.56 E-2 & 1.30 E-2  & 0.92 \\
		\midrule
		3.12 E-2 & 2.24 E-2 & 0.79 \\
		\midrule
		6.25 E-2 & 4.01 E-2 & 0.82 \\
		\midrule
		12.5 E-2 & 6.89 E-2  & 0.80 \\
		\bottomrule
	\end{tabular}
\vspace{0.1cm}
	\captionof{table}{$\lambda=0.65$}
\end{minipage}
\hspace{-0.6cm}
\begin{minipage}[c]{0.34\textwidth}
	\centering
	\begin{tabular}{c  c  c c } 
		\toprule
		\bfseries $\Dx$  &\bfseries $\text{Error}$  & \bfseries $\text{Rate}$  \\ 
		\midrule
		\midrule
		0.78 E-2 & 0.92 E-2 & - \\
		\midrule
		1.56 E-2 & 1.52 E-2  & 0.73 \\
		\midrule
		3.12 E-2 & 2.45 E-2 & 0.67 \\
		\midrule
		6.25 E-2 & 3.62 E-2 & 0.56 \\
		\midrule
		12.5 E-2 & 5.12 E-2  & 0.50 \\
		\bottomrule
	\end{tabular}
\vspace{0.1cm}
\captionof{table}{$\lambda=0.8$}
\end{minipage}
\hspace{-0.5cm}
\begin{minipage}{0.34\textwidth}
Tables 1 to 5: Information about the numerical rate of convergence.
\end{minipage}
}


\begin{thebibliography}{99}

\bibitem{Alibaud}
	N.~Alibaud.
	\newblock { Entropy formulation for fractal conservation laws.}
	\newblock {\em J. Evol. Equ.}, 7(1), 145-175, 2007.
	
	 
\bibitem{BaVaWit}
C. Bauzet, G. Vallet and P. Wittbold.
\newblock The Cauchy problem for a conservation law with a multiplicative stochastic perturbation. 
\newblock{\em Journal of Hyperbolic Differential Equations.}, 2012. 
	 
\bibitem{BaVaWitParab}
C. Bauzet, G. Vallet and P. Wittbold.
\newblock A degenerate parabolic-hyperbolic Cauchy problem with a stochastic force.
\newblock{\em Journal of Hyperbolic Differential Equations}, 12(3) (2015) 501-533. 


\bibitem{BaVaWit_JFA}
C. Bauzet, G. Vallet and P. Wittbold.
\newblock The Dirichlet problem for a conservation law with a multiplicative stochastic perturbation.
\newblock{\em J. Funct. Anal.}, 266 (4), 2503-2545, 2014. 


  
\bibitem{bauzet2015}
C. Bauzet.
\newblock Time-splitting approximation of the Cauchy problem for a stochastic conservation law.
\newblock{\em  Math. Comput. Simulation} 118 (2015), 73-86
	
\bibitem{bauzet-fluxsplitting}
C.~ Bauzet, J. Charrier, and T. Gallou\"et.
\newblock Convergence of flux-splitting finite volume schemes for hyperbolic scalar conservation laws with a multiplicative stochastic perturbation.
\newblock{\em  Mathematics of Computation}  85 (2016), 2777-2813. 
	 
\bibitem{bauzet-finvolume}
C.~ Bauzet, J. Charrier, and T. Gallou\"et.
\newblock  Convergence of monotone finite volume schemes for hyperbolic scalar conservation laws with multiplicative noise.
\newblock{\em Stochastic partial differential equations: analysis and computations} 4(1) (2016), 150-223. 
	 
\bibitem{BhKoleyVa}
	N.~ Bhauryal, U.~ Koley, G.~ Vallet.
	\newblock The Cauchy problem for a fractional conservation laws driven by L\'{e}vy noise.
	\newblock {\em Stochastic Processes and their applications}
	\newblock{ https://doi.org/10.1016/j.spa.2020.03.009}

\bibitem{BhKoleyVa1}
 N. Bhauryal, U. Koley, G. Vallet.
    \newblock A Fractional degenerate parabolic-hyperbolic Cauchy problem with noise.
    \newblock {https://arxiv.org/abs/2008.03141}

	

\bibitem{BisKoleyMaj}
I. H. Biswas, U.~ Koley, and A.~ K. Majee.
\newblock  Continuous dependence estimate for conservation laws with  L\'{e}vy noise.
\newblock{\em J. Differ. Equ.}, 259 (2015), 4683-4706.


\bibitem{K1}
A.~Chaudhary, and U.~Koley:
\newblock A convergent finite volume scheme for stochastic compressible barotropic Euler equations, 
\newblock Submitted,  https://arxiv.org/submit/3901170.


\bibitem{K2}
A.~Chaudhary, and U.~Koley:
\newblock {\em On weak-strong uniqueness for stochastic equations of incompressible fluid flow}, 
\newblock {\em https://arxiv.org/pdf/2012.10175.pdf}


\bibitem{Chen-karlsen_2012}
G.~Q.~ Chen, Q.~ Ding, and K.~ H.~ Karlsen.
\newblock On nonlinear stochastic balance laws.
\newblock{\em  Arch. Rational Mech. Anal.}, 204 (3), 707-743, 2012.



\bibitem{Cifani}
Cifani, S., Jakobsen.
\newblock E.R. On numerical methods and error estimates for degenerate fractional convection diffusion equations.
\newblock{\em Numer. Math.} 127, 447-483 (2014)

\bibitem{CifaniJakobsen}
	S. Cifani, and E. R. Jakobsen.
	\newblock Entropy solution theory for fractional degenerate convection-diffusion equations.
	\newblock { \em Ann. I. H. Poincar\'e}, 28(3), 413-441, 2011.

\bibitem{CrandallMajda}
M.~G. Crandall and A.~Majda.
\newblock Monotone difference approximations for scalar conservation laws.
\newblock {\em Math. Comp.}, 34(149):1--21, 1980.

%
%


\bibitem{Debussche}
A. Debussche, J. Vovelle;
\textit{ Scalar conservation laws with stochastic forcing. J. Funct.
	Anal. 259(4), 1014–1042 (2010).}



\bibitem{vovelle}
A. Debussche, M. Hofmanová, J. Vovelle,
\textit{Degenerate parabolic stochastic partial differential equations: quasilinear case. Ann. Probab. 44 (2016), no. 3, 1916–1955. 60H15 (35K65 35R60).} 




\bibitem{DelTeso18}
F.~Del Teso, J.~Endal, and E.~R. Jakobsen.
\newblock Robust numerical methods for nonlocal (and local) equations of porous  medium type. part II: Schemes and experiments.
\newblock {\em SIAM Journal on Numerical Analysis}, 56(6):3611-3647, 2018.


\bibitem{DelTeso19}
F.~Del Teso, J.~Endal, and E.~R. Jakobsen.
\newblock Robust numerical methods for nonlocal (and local) equations of porous  medium type. part I: Theory.
\newblock {\em SIAM Journal on Numerical Analysis}, 57(5):2266-2299, 2019.

  
%

  \bibitem{Jerome}
  J\'{e}r\^{o}me Droniou.
  \newblock A numerical method for fractal conservation laws.
  \newblock{ \em Mathematics of Computation}, 269, 95--124, 2010.
  
%
   
\bibitem{gallouet00}
R. Eymard, T. Gallou\"et, and R. Herbin.
\newblock Finite volume methods. 
\newblock {\em  Handbook of numerical analysis, Vol. VII, Handb. Numer. Anal., VII},713-1020. North-Holland, Amsterdam, 2000.
 
%
%

\bibitem{Hartenetal}
A.~Harten, P.~D. Lax, and B.~van Leer.
\newblock On upstream differencing and {G}odunov-type schemes for hyperbolic
  conservation laws.
\newblock {\em SIAM Rev.}, 25(1):35--61, 1983.


\bibitem{MKS01}
M.~Hofmanova, U.~Koley, and U. Sarkar:
\newblock Measure-valued solutions to the stochastic compressible Euler equations and incompressible limits.
\newblock{\em https://arxiv.org/pdf/2012.07391.pdf}


\bibitem{risebroholden1997}
H.~ Holden and N.~H.~ Risebro.
\newblock Conservation laws with random source.
\newblock{\em Appl. Math. Optim}, 36(1997), 229-241.

\bibitem{Huang}
Y.~Huang and A.~Oberman.
\newblock Numerical methods for the fractional laplacian: A finite
  difference-quadrature approach.
\newblock {\em SIAM Journal on Numerical Analysis}, 52(6):3056--3084, 2014.

%



\bibitem{ujjwal}
K.~H.~Karlsen, U.~Koley, and N.~H.~Risebro
\newblock An error estimate for the finite difference approximation to degenerate convection-diffusion equations.
\newblock {\em Numer. Math.}, 121(2): 367-395, 2012.


\bibitem{KMV}
U.~Koley,  A.~K.~Majee, and G.~Vallet.
\newblock Continuous dependence estimate for a degenerate parabolic-hyperbolic equation with L\'{e}vy noise.
\newblock{\em Stochastic Partial Differential Equations: Analysis and Computations}, to appear, DOI: 10.1007/s40072-016-0084-z
 
 \bibitem{KMV1}
 U.~Koley,  A.~K.~Majee, and G.~Vallet.
\newblock A finite difference scheme for conservation laws driven by L\'{e}vy noise.
\newblock{ \em IMA Journal of Numerical Analysis}, 38(2), 998-1050, 2018
 
\bibitem{koley2013multilevel}
U. Koley, N. H. Risebro, C. Schwab and F. Weber.
\newblock{A multilevel Monte Carlo finite difference method for random scalar degenerate convection-diffusion equations.}
\newblock{\em J. Hyperbolic Differ. Equ.}, 14(3), 415-454, 2017.


\bibitem{Koley3}
U. Koley, D. Ray, and T. Sarkar.
\newblock{Multi-level Monte Carlo finite difference methods for fractional conservation laws with random data.,}
\newblock{\em SIAM/ASA J. Uncertain. Quantif.}, 9(1), 65–105, 2021.
 
\bibitem{kroker}
I.~Kroker and C.~Rohde.
\newblock Finite volume schemes for hyperbolic balance laws with multiplicative noise.
\newblock {\em Applied Numerical Mathematics 62}, 441-456, 2012.
 
  
\bibitem{kruzkov}
S.~ N.~ Kruzkov.
\newblock First order quasilinear equations with several independent variables.
\newblock {\em Mat. Sb. (N.S.)} , 81(123): 228-255, 1970.


\bibitem{Oleinik}
O.~A. Ole{\u\i}nik.
\newblock Convergence of certain difference schemes.
\newblock {\em Soviet Math. Dokl.}, 2:313--316, 1961.


\bibitem{PrevotRockner}
Pr\'{e}v\^{o}t, Claudia and R\"{o}ckner, Michael
\newblock{ \em A concise course on stochastic partial differential equations.} volume 1905 of {\em Lecture Notes in Mathematics}. 
\newblock Springer, 2007

\bibitem{Volpert}
A.~I. Vol'pert.
\newblock Generalized solutions of degenerate second-order quasilinear
  parabolic and elliptic equations.
\newblock {\em Adv. Differential Equations}, 5(10-12):1493--1518, 2000.


\end{thebibliography}
 \end{document}